\newcommand{\C}{\mathbb{C}}
\newcommand{\R}{\mathbb{R}}
\newcommand{\Q}{\mathbb{Q}}
\newcommand{\Z}{\mathbb{Z}}
\newcommand{\kk}{\mathbbm{k}}
\newcommand\ps{\{\!\{t\}\!\}}
\newcommand{\cS}{\mathcal{S}}
\newcommand{\cJ}{\mathcal{J}}
\renewcommand{\subset}{\subseteq}
\renewcommand{\emptyset}{\varnothing}
\DeclareMathOperator{\conv}{conv}
\DeclareMathOperator{\supp}{supp}
\DeclareMathOperator{\dom}{dom}
\DeclareMathOperator{\Dr}{Dr}
\DeclareMathOperator{\chara}{char}
\newtheorem{theorem}{Theorem}[section]
\newtheorem*{theorem*}{Theorem}
  \newtheorem{lemma}[theorem]{Lemma}
   \newtheorem{corollary}[theorem]{Corollary}
  \newtheorem{proposition}[theorem]{Proposition}
\newtheorem{conjecture}[theorem]{Conjecture}
 \theoremstyle{definition}
  \newtheorem{remark}[theorem]{Remark}
\newtheorem{example}[theorem]{Example}
\newtheorem*{example*}{Example}
\title{Valuated Delta Matroids and Principal Minors of Hermitian matrices}
\author{Nathan Cheung}
\email{ncheung@uw.edu}
\author{Tracy Chin}
\email{tlchin@uw.edu}
\author{Gaku Liu}
\email{gakuliu@uw.edu}
\author{Cynthia Vinzant}
\email{vinzant@uw.edu}
\address{University of Washington, Seattle, USA} 
\begin{document}

\begin{abstract}
    In this paper we introduce valuated $\Delta$-matroids, a natural generalization of two objects of study in matroid theory: valuated matroids and $\Delta$-matroids. We show that these objects exhibit nice properties analogous to ordinary valuated matroids. We also show that these objects arise as the valuations of principal minors of a Hermitian matrix over a valued field, generalizing other forms of $\Delta$-matroid representability.
\end{abstract}

\maketitle

Valuated matroids are a well-studied generalization of matroids. Matroids can be thought of as functions $\binom{[n]}{r} \to  \{0,\infty\}$ satisfying a set of ``combinatorial Pl\"{u}cker relations'', which essentially reduce to the basis exchange axiom.
\emph{Valuated matroids} were defined by Dress and Wenzel \cite{DW92} as a combinatorial abstraction of matroids represented over a valued field.  They are functions $\binom{[n]}{r} \to  \R \cup \{\infty\}$ satisfying a set of relations called the \emph{tropical Pl\"{u}cker relations}. For example, if $K$ is a field equipped with a nonarchimedean valuation $\nu : K \to \R \cup \{\infty\}$ and $v_1,v_2,\dots,v_n$ are vectors which span $K^r$, then the map
\[
\{i_1 < i_2 < \dots < i_r\} \mapsto \nu( \det (v_{i_1}, \dots , v_{i_r}) )
\]
is a valuated matroid. Such valuated matroids are called \emph{representable}. Valuated matroids specialize to ordinary matroids by restricting the codomain to $\{0,\infty\}$.

Delta matroids, or $\Delta$-matroids, can be thought of as the type $B$ analogue of matroids (which are ``type $A$''). A \emph{$\Delta$-matroid} is a family of subsets of $[n]$, called bases, such that if $A$, $B$ are bases and $a \in A \Delta B$, then there exists $b \in A \Delta B$ such that $A \Delta \{a,b\}$ is a basis. If in the definition we further require that $a \neq b$, then the family is an \emph{even $\Delta$-matroid}.

A common and important way to study matroids is through their polytopes. The \emph{matroid polytope} of a matroid is the convex hull of the indicator vectors of the bases of the matroid. A 0-1 polytope is the matroid polytope of a matroid if and only if all of its edges are in the direction $e_i - e_j$ for some $i$, $j$. Analogously, the \emph{$\Delta$-matroid polytope} of a $\Delta$-matroid is the convex hull of the indicator vectors of its bases. A 0-1 polytope is the $\Delta$-matroid polytope of a $\Delta$-matroid if and only if all its edges are in the direction $e_i$, $e_i - e_j$, or $e_i + e_j$ for some $i$, $j$. A 0-1 polytope is the $\Delta$-matroid polytope of an \emph{even} $\Delta$-matroid if and only if all its edges are in the direction $e_i - e_j$ or $e_i + e_j$ for some $i$, $j$.

Given a finite set of points $X \subset \R^n$ and a function $p : X \to \R \cup \{\infty\}$, we obtain a polytopal complex in $\R^n$ by projecting the lower faces of the polytope $\conv\{ (x,p(x)) : x \in X, p(x) \neq \infty \}$. This complex is called the \emph{subdivision induced by $p$}, and is a polyhedral subdivision of some polytope with vertices in $X$. A function $\binom{[n]}{r} \to  \R \cup \{\infty\}$ is equivalent to a function $\Delta_{n,r} \to \R \cup \{\infty\}$ where $\Delta_{n,r}$ is the set of all 0-1 vectors in $\R^n$ with $r$ 1's, and thus induces a subdivision in $\R^n$. Such a function is a \emph{valuated matroid} if and only if every polytope in the induced subdivision is a matroid polytope.

It is natural to try to give a common generalization of these notions. Wenzel \cite{Wenzel} defines a \emph{strongly valuated $\Delta$-matroid} to be a function $\{0,1\}^n \to \R \cup \{\infty\}$ which satisfies a set of relations called the \emph{tropical Wick relations}. For example, given a skew symmetric matrix over a valued field, the valuations of the principal minors form a strongly valuated $\Delta$-matroid.
Rinc\'{o}n \cite{Felipe} shows that a function $\{0,1\}^n \to \R \cup \{\infty\}$ is a strongly valuated $\Delta$-matroid if and only if every polytope in the induced subdivision is an \emph{even} $\Delta$-matroid polytope. Note that Rinc\'{o}n, as well as other authors, refer to Wenzel's strongly valuated $\Delta$-matroids as just valuated $\Delta$-matroids.

\begin{example*}
    Consider the $3\times 3$ symmetric matrix 
    $\begin{pmatrix} 0 & 1 & 1 \\ 1 & 0 & 1 \\ 1 & 1& 0\end{pmatrix}$ over a field of characteristic $\neq 2$. The principal minors of size $1$ are zero and all others are nonzero. 
    The corresponding $\Delta$-matroid has bases $\emptyset, \{1,2\}, \{1,3\}, \{2,3\}$, and $\{1,2,3\}$, which is not even. Indeed, 
    taking $A = \{1,2,3\}$, $B = \emptyset$, and any $a\in A$, we see that this fails to satisfy the \emph{strong} exchange axiom, in which there must exist $b$ so that both $A\Delta\{a,b\}$ and $B\Delta\{a,b\}$ are bases. In this example, the only choice of $b$ for which  $A\Delta\{a,b\}$ is a basis is $b=a$, in which case $A\Delta\{a,b\} = \{a\}$ is not.  To study such representable $\Delta$-matroids and their valuated analogues, we therefore give up the strong exchange axiom. 
\end{example*}

In this paper, we call a function $p : \{0,1\}^n \to \R \cup \{\infty\}$ a \emph{valuated $\Delta$-matroid} if every polytope in the induced subdivision is a $\Delta$-matroid polytope (not necessarily even). In other words, every edge of the induced regular subdivision has direction $\pm e_i$ or $e_i \pm e_j$ for some $i,j\in [n]$. We note that this conflicts with other recent terminology as mentioned above. However, we believe our definition is the natural definition of a valuated $\Delta$-matroid which is consistent with the history and other terminology of $\Delta$-matroids. As we demonstrate in this paper, these objects are interesting in their own right and generalize previous concepts in the study of $\Delta$-matroids, including non-even $\Delta$-matroids.

A summary of our results is as follows:

\begin{itemize}[left=0pt .. \parindent]
    \item As mentioned above, ordinary valuated matroids can be characterized as functions $\binom{[n]}{r} \to  \R \cup \{\infty\}$ satisfying a set of local relations called the tropical Pl\"{u}cker relations. Geometrically, this is equivalent to saying a function is a valuated matroid if and only if its induced subdivision has no edges of length $3$, where the \emph{length} of a vector $(x_1,\dots,x_n)$ is defined to be its $L_1$-length $\sum_{i=1}^n |x_i|$.

    In analogy to this, we prove that a function $\{0,1\}^n \to \R \cup \{\infty\}$ is a valuated $\Delta$-matroid if and only if its induced subdivision has no edges of length $3$ or $4$. This means that determining whether a function is a valuated $\Delta$-matroid can be done by checking a list of local conditions which is bounded up to symmetry, as is the case with ordinary valuated matroids. The exact conditions are given in Lemma~\ref{lem:valuated-delta-matroid-3} and Theorem~\ref{thm:n4-charac}.

    \item We discuss the rank function, and use it to prove that any $\Delta$-matroid can appear as the cell of a $\Delta$-matroid subdivision.

    \item The set of all functions $\binom{[n]}{r} \to  \R \cup \{\infty\}$ that are valuated matroids form a polyhedral fan called the \emph{Dressian}. Analogously, we call the set of all functions $\{0,1\}^n \to \R \cup \{\infty\}$ that are valuated $\Delta$-matroids the \emph{$\Delta$-Dressian}, $\Delta$-${\rm Dr}(n)$. (This conflicts slightly with the usage in \cite{Felipe}.) In \Cref{thm:DimDrBounds}, we establish bounds on the dimension of this object, showing that is exponential in $n$.

    \item Given a symmetric or skew-symmetric matrix over any field \cite{Bouchet} or a Hermitian matrix in $\C^{n \times n}$ \cite{B07}, the nonzero principal minors of the matrix form a $\Delta$-matroid. We generalize this result to show that given any valued field equipped with an automorphic involution preserving the valuation, and any matrix over this field which is Hermitian or skew-Hermitian with respect to the involution, the valuations of the principal minors of the matrix form a valuated $\Delta$-matroid.

    We show that in dimension at most three, all valuated $\Delta$-matroids can be realized in this way up to symmetry. We show this is not true in dimension four. Finally, we conjecture that if a matrix is the sum of a skew-Hermitian matrix and a rank one Hermitian matrix, then the valuations of its principal minors form a $\Delta$-matroid. We prove this is several cases, including the case of a skew-symmetric matrix plus a rank one symmetric matrix.

    \item We show how our results on Hermitian matrices can be rewritten in terms of sesquilinear forms and isotropic subspaces, and that this generalizes previous ways of representing $\Delta$-matroids through these subspaces.
\end{itemize}

{\bf Acknowledgements.} We would like to thank Matt Baker, Changxin
Ding, Alex Fink, Donggyu Kim, Felipe Rinc\'on, and Josephine Yu for helpful discussions and comments.
	Authors Chin and Vinzant were partially supported by NSF grant DMS-2153746. Author Liu was supported by NSF grant DMS-2348785.

\section{$\Delta$-matroid polytopes and regular subdivisions}

A \emph{$\Delta$-matroid} on $[n]$ is a family of subsets of $[n]$, called bases, such that if $A$, $B$ are bases and $a \in A \Delta B$, then there exists $b \in A \Delta B$ such that $A \Delta \{a,b\}$ is a basis. To each subset $S \subseteq [n]$ we associate its \emph{indicator vector} $e_S := \sum_{i \in S} e_i \in \R^n$, where $e_i$ is the $i$-th standard basis vector of $\R^n$. The convex hull of the indicator vectors of a $\Delta$-matroid is called a \emph{$\Delta$-matroid polytope}. A polytope in $\R^n$ with vertices in $\{0,1\}^n$ is a $\Delta$-matroid polytope if and only if all its edges are in the direction $\pm e_i$ or $\pm e_i \pm e_j$ for some $i$, $j$. From now on, we will refer to $\Delta$-matroid polytopes as simply ``$\Delta$-matroids'' and conflate the following objects: a $\Delta$-matroid, its associated $\Delta$-matroid polytope, and the set of vertices of this polytope (i.e. the indicator vectors of the bases).

Given a line segment with endpoints in $\{0,1\}^n$, we define its \emph{length} to be the Hamming distance between its endpoints, i.e.\ the number of coordinates at which the endpoints differ. Hence, a 0-1 polytope is a $\Delta$-matroid if and only if all its edges have length at most 2.

Let $\R \cup \{\infty\}$ be the real number system extended by positive $\infty$. The only operations we will perform involving $\infty$ will be $+$, $\max$, $\min$, and multiplication by a positive scalar, in which case $\infty$ behaves as one would expect (e.g., $a + \infty = \infty$ for all $a \in \R \cup \{\infty\}$).

A \emph{polytopal complex} is a collection $\cS$ of polytopes in $\R^n$ satisfying the following properties.
\begin{enumerate}
    \item If $P \in \cS$ and $F$ is a face of $P$, then $F \in \cS$.
    \item If $P$, $Q \in \cS$ and $P \neq Q$, then the relative interiors of $P$ and $Q$ are disjoint.
\end{enumerate}
The elements of $\cS$ are called \emph{cells}.
The \emph{support} of $\cS$ is $\bigcup_{P \in \cS} P$. A polytopal complex whose support is a polytope $Q$ is called a \emph{polytopal subdivision}, or \emph{subdivision}, of $Q$.

Let $X \subset \R^n$ be a finite set of points and $p : X \to \R \cup \{\infty\}$ any function. The \emph{effective domain} of $p$ is $\dom p := \{x \in X : p(x) \neq \infty \}$. Define a polytope $P_p$ in $\R^{n+1}$ by
\[
P_p := \conv \{ (x,p(x)) : x \in \dom p \}.
\]
The projection $\R^{n+1} \to \R^n$ from deleting the last coordinate maps $P_p$ onto $\conv \dom p$. Moreover, it maps each lower face of $P_p$ (that is, each face of $P_p$ supported by a hyperplane whose inner normal has positive last coordinate) to a polytope contained in $\conv \dom p$. The collection of all projected lower faces forms a polytopal subdivision of $\conv \dom p$, which is called the subdivision \emph{induced} by $p$. We denote it by $\mathcal S_p$.

We next give a characterization of when a simplex appears in $\cS_p$. Recall that if $I$ is an affinely independent set of points and $x$ is in the relative interior of $\conv I$, then $x$ can be written uniquely as $x = \sum_{i \in I} \lambda_i i$ where $0 < \lambda_i < 1$ and $\sum_{i \in I} \lambda_i = 1$. We call this the \emph{convex circuit representation} of $x$ with support $I$. If the elements of $I$ have weights given by a function $p$, we define
\[
p(x, I) := \sum_{i \in I} \lambda_i p(i).
\]

\begin{proposition} \label{prop:cellcharacterization}
   Let $X \subset \R^n$ be a finite set of points in convex position, and let $p : X \to \R \cup \{\infty\}$ be any function. Let $I$ be an affinely independent subset of $X$. The following are equivalent.
    \begin{enumerate}
        \item $I$ is the set of vertices of a simplex in $\cS_p$.
        \item For any $x$ in the relative interior of $\conv I$, and for any convex circuit representation of $x$ with different support $J \subseteq X$, we have $p(x,I) < p(x,J)$.
        \item There exists $x$ in the relative interior of $\conv I$, such that for any convex circuit representation of $x$ with different support $J \subseteq X$, we have $p(x,I) < p(x,J)$.
    \end{enumerate}
\end{proposition}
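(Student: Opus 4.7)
The plan is to reformulate (1) as follows: $I$ is the vertex set of a simplex in $\cS_p$ if and only if there exists an affine function $\phi : \R^n \to \R$ with $\phi(i) = p(i)$ for $i \in I$ and $\phi(x) < p(x)$ for $x \in \dom p \setminus I$. The graph of such a $\phi$ defines a supporting hyperplane of $P_p$ from below whose lower face has vertex set exactly $\{(i, p(i)) : i \in I\}$, projecting to $\conv I$ as a cell of $\cS_p$.

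For (1) $\Rightarrow$ (2), I would take such a $\phi$; given $x \in \relint \conv I$ with barycentric expression $x = \sum_{i \in I} \lambda_i i$ and any convex circuit representation $x = \sum_{j \in J} \mu_j j$ with affinely independent $J \neq I$, the key observation is that $J \not\subseteq I$: otherwise $\conv J$ would be a proper face of the simplex $\conv I$ and $x \in \relint \conv J$ would contradict $x \in \relint \conv I$. Hence some $j \in J \setminus I$ has $\mu_j > 0$ and $p(j) > \phi(j)$, so $p(x, J) = \sum_{j} \mu_j p(j) > \sum_{j} \mu_j \phi(j) = \phi(x) = p(x, I)$. The implication (2) $\Rightarrow$ (3) is immediate since $\relint \conv I$ is nonempty.

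The main work is (3) $\Rightarrow$ (1), for which I would use linear programming duality. Consider the primal LP of minimizing $\sum_{k \in \dom p} \alpha_k p(k)$ over $\alpha_k \ge 0$ satisfying $\sum_k \alpha_k k = x_0$ and $\sum_k \alpha_k = 1$. Its vertices correspond bijectively to convex circuit representations of $x_0$ with affinely independent support in $\dom p$, via the coefficients. By (3), the optimum is achieved uniquely at the vertex supported on $I$, and since the feasible polytope is bounded (contained in a simplex), this is the unique LP optimum $\alpha^*$. The dual LP maximizes $\phi(x_0)$ over affine $\phi$ with $\phi \le p$ on $\dom p$; any optimal dual $\phi^*$ then satisfies $\phi^*(i) = p(i)$ for $i \in I$ by complementary slackness applied at $\alpha^*$.

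The main obstacle is producing an optimal dual $\phi^*$ that additionally satisfies $\phi^*(x) < p(x)$ for every $x \in \dom p \setminus I$. For this I would invoke the Goldman--Tucker strict complementary slackness theorem, which guarantees an optimal primal/dual pair $(\alpha^*, \phi^*)$ such that $\alpha^*_k > 0$ if and only if $\phi^*(k) = p(k)$. Since the unique primal optimum has support exactly $I$, the corresponding $\phi^*$ satisfies $\phi^*(k) = p(k)$ precisely when $k \in I$, furnishing the certifying affine function and completing the proof.
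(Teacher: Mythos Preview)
Your argument is correct. The reformulation of (1) via a certifying affine function, the proof of (1) $\Rightarrow$ (2), and the trivial step (2) $\Rightarrow$ (3) match the paper's treatment essentially verbatim.

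Where you diverge is in (3) $\Rightarrow$ (1). The paper proceeds by contrapositive: assuming $\conv I$ is not a cell of $\cS_p$, it locates the actual cell of $\cS_p$ whose relative interior contains a given $x \in \relint \conv I$, observes that its vertex set $F$ contains a point outside $I$, and then uses a pulling triangulation of $\conv F$ (pulled from that outside point) to extract an affinely independent $J \neq I$ with $x \in \relint \conv J$ and $p(x,J) \le p(x,I)$. Your route is instead a direct construction of the certifying $\phi$ by linear programming: the primal LP over convex representations of $x_0$ has the vertex supported on $I$ as its unique optimum, and Goldman--Tucker strict complementarity then produces a dual optimum $\phi^*$ tight exactly on $I$. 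Both arguments are clean; the paper's is entirely self-contained and geometric, while yours trades the pulling-triangulation step for an appeal to a classical LP theorem, which some readers may find more transparent and which packages the existence of the strict supporting hyperplane in a single stroke.
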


\begin{proof}
    (1) $\implies$ (2). Suppose $I$ is the set of vertices of a simplex in $\cS_p$. Then $I_p := \{ (x,p(x)) : x \in I \}$ is the set of vertices of a lower face of $P_p$. Hence, there is a linear functional $(\phi,1) \in (\R^n)^\ast \times \R$ and real number $b$ such that $(\phi,1)(x,y) = b$ for all $(x,y) \in \conv(I_p) \subset \R^n \times \R$, and $(\phi,1)(x,y) > b$ for all $(x,y) \in P_p \setminus \conv(I_p)$. 
    
    Let $x$ be in the relative interior of $\conv I$, and let $\sum_{i \in I} \lambda_i i$ be its convex circuit representation with support $I$. We have
    \[
        \sum_{i \in I} \lambda_i (\phi,1)(i,p(i)) = b \implies \phi(x) + p(x,I) = b \implies p(x,I) = -\phi(x) + b.
    \]
    Suppose that $x$ is also in the relative interior of $\conv J$, where $J \subseteq X$ and $J \neq I$. Since $X$ is in convex position, at least one element of $J$ is not in $\conv I$. Let $\sum_{j \in J} \lambda_j' j$ be the convex support representation of $x$ with support $J$. Then
    \[
        \sum_{j \in J} \lambda_j' (\phi,1)(j,p(j)) > b \implies \phi(x) + p(x,J) > b \implies p(x,J) > -\phi(x) + b.
    \]
    It follows that $p(x,I) < p(x,J)$, as desired. (If any element of $J$ is not in $\dom p$, then this inequality is obvious as the right side is finite and the left side is $\infty$.)

    (2) $\implies$ (3). Immediate.

    (3) $\implies$ (1). We prove the contrapositive. Suppose $I$ is not the set of vertices of a simplex in $\cS_p$. Choose any $x$ in the relative interior of $\conv I$. Let $F \subset X$ be the set of vertices of the cell of $\cS_p$ whose relative interior contains $x$. Since $x$ is in the relative interior of $\conv I$, $F$ is not a proper subset of $I$. In addition, $F \neq I$ by assumption. Hence $F$ contains a point not in $I$. It follows that there exists $J \subset F$, $J \neq I$, such that $J$ is affinely independent and $x$ is in the relative interior of $\conv J$. (For example, one can consider a pulling triangulation of $\conv F$ starting with a vertex not in $I$.) Using the same argument as above, we have $p(x,J) \le p(x,I)$, and thus (3) is not true.
\end{proof}

We call a function $p : \{0,1\}^n \to \R \cup \{\infty\}$ a \emph{valuated $\Delta$-matroid} if every cell in the induced subdivision $\cS_p$ is a $\Delta$-matroid. Equivalently, a function $p : \{0,1\}^n \to \R \cup \{\infty\}$ is a valuated $\Delta$-matroid if and only if all the edges (1-dimensional cells) of its induced subdivision have length at most 2. Slightly more generally, given a subset $M$ of $\{0,1\}^n$ such that $M$ is a $\Delta$-matroid, a function $p : M \to \R \cup \{\infty\}$ is a \emph{valuated $\Delta$-matroid} if every cell in the induced subdivision $\cS_p$ is a $\Delta$-matroid. Crucially, if $p : \{0,1\}^n \to \R \cup \{\infty\}$ is a valuated $\Delta$-matroid, then the restriction of $p$ to any face of $[0,1]^n$ is also a valuated $\Delta$-matroid.

We set some final notation. Given a function $p : \{0,1\}^n \to \R \cup \{\infty\}$, we will often abbreviate the value $p(e_S)$ as $p_S$. In addition, we will often abbreviate a set $\{a_1,\dots,a_k\}$ as $a_1 \dots a_k$; for example $p(e_{\{1,2,3\}})$ would be written $p_{123}$. For a set $S \subseteq [n]$, we define $S^C := [n] \setminus S$ to be its complement in $[n]$.

\section{The cases $n=3$ and $n=4$}

Before we describe a characterization for valuated $\Delta$-matroids on $\{0,1\}^n$ for all $n$, we first work with $n=3$ and $4$. In \cref{sec:characterization-for-all-n}, we will show that our constraints for $n=3$ and $4$ are enough to characterize valuated $\Delta$-matroids for all $n$.

\begin{lemma}\label{lem:valuated-delta-matroid-3}
    A function $p : \{0,1\}^3 \to \R \cup \{\infty\}$ is a valuated $\Delta$-matroid on $[3]$ if and only if the minimum
    \begin{align*}
    \min(2p_\emptyset + 2p_{123}, 2p_1 + 2p_{23}, 2p_2 + 2p_{13}&, 2p_3 + 2p_{12}, \\
    &p_\emptyset + p_{12} + p_{13} + p_{23}, p_1 + p_2 + p_3 + p_{123})
    \end{align*}
    is achieved on at least two of its arguments, or on one of the last two arguments.
\end{lemma}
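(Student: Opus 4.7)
The plan is to connect the edge-length condition defining valuated $\Delta$-matroids to a local criterion at the center of the cube $c := (\tfrac{1}{2}, \tfrac{1}{2}, \tfrac{1}{2})$, via \Cref{prop:cellcharacterization}. First observe that $p$ fails to be a valuated $\Delta$-matroid exactly when $\cS_p$ has an edge of length at least $3$, and in $\{0,1\}^3$ the only such segments are the four \emph{main diagonals} $\{e_\emptyset, e_{123}\}$, $\{e_1, e_{23}\}$, $\{e_2, e_{13}\}$, $\{e_3, e_{12}\}$. All four pass through $c$, so the question reduces to: when is one of these diagonals a $1$-cell of $\cS_p$?

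The next step is to enumerate every affinely independent $I \subseteq \{0,1\}^3$ with $c \in \relint \conv I$; these are the only supports needed to apply \Cref{prop:cellcharacterization} at $x = c$. Since $|I| \le 4$ in $\R^3$, the enumeration is finite. The two-element supports are exactly the four main diagonals. No three-element support works: any triangle containing a pair of antipodal cube vertices has $c$ on one of its edges, and a brief direct check rules out the remaining triangles. The four-element supports are the two ``parity tetrahedra'' $\{e_\emptyset, e_{12}, e_{13}, e_{23}\}$ and $\{e_1, e_2, e_3, e_{123}\}$, which have $c$ as centroid. Computing $4\,p(c,I)$ on these six supports reproduces exactly the six expressions in the lemma, in the stated order.

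Finally, I would apply \Cref{prop:cellcharacterization} at $x = c$ (using the equivalence of conditions (1) and (3)): a main diagonal $\{e_S, e_{S^C}\}$ is a $1$-cell of $\cS_p$ if and only if its associated expression $2p_S + 2p_{S^C}$ is the strict minimum among the six. Hence $p$ is a valuated $\Delta$-matroid iff none of the four diagonal expressions is the unique strict minimum, which rearranges exactly to ``the minimum is achieved on at least two of its arguments, or on one of the last two.'' The main obstacle is the enumeration in the second paragraph—confirming that these six supports are exhaustive—but this is a short case analysis based on the antipodal structure of the cube.
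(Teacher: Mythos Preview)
Your proposal is correct and follows essentially the same approach as the paper's own proof: both reduce the question to whether a main diagonal is an edge of $\cS_p$, apply \Cref{prop:cellcharacterization} at the center $c=(\tfrac12,\tfrac12,\tfrac12)$, and rely on the enumeration of the six convex circuit representations of $c$ (the four diagonals and the two parity tetrahedra). The only cosmetic difference is that the paper argues via the contrapositive in each direction while you phrase it directly, but the underlying logic and the key enumeration are identical.
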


\begin{proof}
    We will show the contrapositive in both directions.

    First, suppose that $p : \{0,1\}^3 \to \R \cup \{\infty\}$ is not a valuated $\Delta$-matroid. Then the induced subdivision $\cS_p$ has an edge of length 3, so $[e_S,e_{S^C}]$ is an edge for some $S \subseteq [3]$. Without loss of generality, assume $[0,e_{123}]$ is an edge. Let $x = (1/2,1/2,1/2)$ be the midpoint of this edge. We list all of the convex circuit representations of $x$ using subsets of $\{0,1\}^3$:
    \begin{multline*}
    \frac{1}{2}(e_\emptyset + e_{123}), \frac{1}{2}(e_1 + e_{23}), \frac{1}{2}(e_2+e_{13}), \frac{1}{2}(e_3+e_{12}), \\
    \frac{1}{4}(e_\emptyset + e_{12} + e_{13} + e_{23}), \frac{1}{4}(e_1 + e_2 + e_3 + e_{123})
    \end{multline*}
    By Proposition~\ref{prop:cellcharacterization}, $[0,e_{123}]$ being an edge means that in the list of numbers
    \begin{multline*}
    \frac{1}{2}(p_\emptyset + p_{123}), \frac{1}{2}(p_1 + p_{23}), \frac{1}{2}(p_2+p_{13}), \frac{1}{2}(p_3+p_{12}), \\
    \frac{1}{4}(p_\emptyset + p_{12} + p_{13} + p_{23}), \frac{1}{4}(p_1 + p_2 + p_3 + p_{123})
    \end{multline*}
    the minimum is achieved uniquely at $\frac{1}{2}(p_\emptyset + p_{123})$. In particular, the minimum is not achieved twice nor at one of the last two arguments, as desired.

    Conversely, suppose the above minimum is not achieved on at least two arguments nor on one of the last two arguments. Then it must be achieved uniquely on one of the first four arguments, that is, $\frac{1}{2}(p_S + p_{S^C})$ for some $S \subseteq [3]$. By Proposition~\ref{prop:cellcharacterization}(3) applied to the point $x = (1/2,1/2,1/2)$, it follows that $[e_S,e_{S^C}]$ is an edge in $\cS_p$. Thus $\cS_p$ is not a $\Delta$-matroid.
\end{proof}

\begin{figure}
    \centering
\tdplotsetmaincoords{70}{130}
\begin{tikzpicture}[axis/.style={->,black,thick}, tdplot_main_coords]
    \def\R{2}

    \draw[canvas is xy plane at z=0, thick]
        (\R,0) -- (\R, \R) -- (0, \R) -- (0,0) -- cycle;
    \draw[canvas is xz plane at y=0, thick]
        (\R,0) -- (\R, \R) -- (0, \R) -- (0,0) -- cycle;
    \draw[canvas is yz plane at x=0, thick]
        (\R,0) -- (\R, \R) -- (0, \R) -- (0,0) -- cycle;

    \draw[fill=gray, opacity=0.5]
        (\R,0,0) -- (\R,\R,0) -- (0, \R, \R) -- (0, 0, \R) -- cycle;

    \draw[canvas is xy plane at z=\R, thick]
        (\R,0) -- (\R, \R) -- (0, \R) -- (0,0) -- cycle;
    \draw[canvas is xz plane at y=\R, thick]
        (\R,0) -- (\R, \R) -- (0, \R) -- (0,0) -- cycle;
    \draw[canvas is yz plane at x=\R, thick]
        (\R,0) -- (\R, \R) -- (0, \R) -- (0,0) -- cycle;

    \begin{scope}[shift={(4 cm, 0 cm)}]
        \draw[canvas is xy plane at z=0, thick]
            (\R,0) -- (\R, \R) -- (0, \R) -- (0,0) -- cycle;
        \draw[canvas is xz plane at y=0, thick]
            (\R,0) -- (\R, \R) -- (0, \R) -- (0,0) -- cycle;
        \draw[canvas is yz plane at x=0, thick]
            (\R,0) -- (\R, \R) -- (0, \R) -- (0,0) -- cycle;

        \draw[thick, fill=gray, opacity=0.5]
            (0,\R,0) -- (\R,0,0) -- (0, 0, \R) -- cycle;
        \draw[thick, fill=gray, opacity=0.5]
            (\R,0,0) -- (\R,\R,\R) -- (0, \R, 0) -- cycle;
        \draw[thick, fill=gray, opacity=0.5]
            (\R,0,0) -- (\R,\R,\R) -- (0, 0, \R) -- cycle;
        \draw[thick, fill=gray, opacity=0.5]
            (0,\R,0) -- (\R,\R,\R) -- (0, 0, \R) -- cycle;

        \draw[canvas is xy plane at z=\R, thick]
            (\R,0) -- (\R, \R) -- (0, \R) -- (0,0) -- cycle;
        \draw[canvas is xz plane at y=\R, thick]
            (\R,0) -- (\R, \R) -- (0, \R) -- (0,0) -- cycle;
        \draw[canvas is yz plane at x=\R, thick]
            (\R,0) -- (\R, \R) -- (0, \R) -- (0,0) -- cycle;
    \end{scope}
\end{tikzpicture}
    \caption{Illustration of two cases in \cref{lem:valuated-delta-matroid-3}. The left cube is the case where the minimum is achieved on two of the first four arguments, and the right cube is the case where the minimum is achieved uniquely on one of the last two arguments.}
    \label{fig:subdivisions-3}
\end{figure}
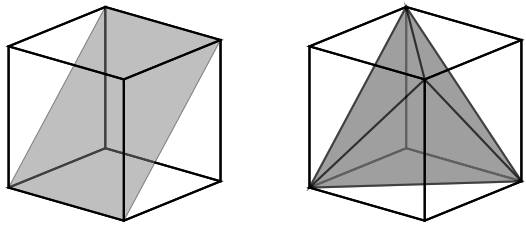

Before stating the characterization for $n=4$, we first need to describe the convex circuit representations of $(1/2,1/2,1/2,1/2)$ with support in $\{0,1\}^4$. Let $B_n$ denote the symmetry group of the $n$-cube. It is generated by permutations of the coordinates and flipping the bits of coordinates.
The following can be verified computationally:
\begin{proposition} \label{prop:n4-circuits}
    The convex circuit representations of $(1/2,1/2,1/2,1/2)$ are exactly the images of the following formal linear combinations under $B_4$:
    \[
    \frac{1}{2}(e_\emptyset + e_{1234}),\ \frac{1}{4}(e_\emptyset + e_{14} + e_{123} + e_{234}),\
    \frac{1}{3}e_\emptyset + \frac{1}{6}(e_{123} + e_{124} + e_{134} + e_{234}).
    \]
\end{proposition}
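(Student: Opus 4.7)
Plan. The three listed combinations visibly satisfy the definition of a convex circuit representation (their supports are affinely independent, the coefficients are positive and sum to $1$, and each combination equals $x = (1/2,1/2,1/2,1/2)$), so their $B_4$-orbits consist of convex circuit representations. The content of the statement is the reverse inclusion: every such representation lies in one of these three orbits. I prove this by case analysis on $k := |I|$, the size of the support. Affine independence in $\R^4$ gives $k \le 5$, and $x \notin \{0,1\}^4$ gives $k \ge 2$. Since $B_4$ acts transitively on $\{0,1\}^4$ and fixes $x$, I may assume $0 \in I$, leaving the residual $S_4$-action by coordinate permutations.

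Cases $k = 2$ and $k = 3$ are quick. If $I = \{0, v\}$ then $(1-\lambda)v = x$ forces each coordinate $v_i = 1$, hence $v = e_{1234}$ and $\lambda = 1/2$, yielding the orbit of $\tfrac{1}{2}(e_\emptyset + e_{1234})$. If $I = \{0, u, v\}$ with positive weights $d, a, b$, then each coordinate requires one of $a = 1/2$, $b = 1/2$, or $a + b = 1/2$; a short case check shows any way of combining these across the four coordinates either violates positivity of $d$ or forces two points in $\{0, u, v\}$ to coincide, so $k = 3$ is impossible.

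For $k = 4$, write $I = \{0, u, v, w\}$ with positive weights $d, a, b, c$ and set $S_i := \{j \in \{u,v,w\} : j_i = 1\}$. Each $S_i$ must satisfy $\sum_{j \in S_i} \lambda_j = 1/2$. Positivity rules out $S_i = \emptyset$, rules out $S_i = \{u,v,w\}$ (which forces $a+b+c = 1/2$, after which no proper nonempty subset of $\{u,v,w\}$ can also sum to $1/2$, so all four coordinates share this $S_i$ and $u = v = w = e_{1234}$), and rules out any singleton $S_i$ (which fixes one of the weights at $1/2$ and, by positivity, forces every other coordinate to repeat the same singleton, collapsing the remaining points to $0$). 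Hence each $S_i$ is a pair in $\{u,v,w\}$. If only one or two distinct pair-types appear, one checks that the points are not all distinct or $I$ is affinely dependent (e.g., two pair-types force one of $u, v, w$ to equal the sum of the other two). So all three pair equations $a+b = a+c = b+c = 1/2$ hold simultaneously, forcing $a = b = c = d = 1/4$; up to $S_4$, the configuration is $\tfrac{1}{4}(e_\emptyset + e_{14} + e_{123} + e_{234})$.

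For $k = 5$, $I$ is an affine basis of $\R^4$ and $\lambda$ is uniquely determined by the linear system $\sum_{j} \lambda_j v_j = x$. Fixing $0 \in I$, I enumerate $S_4$-orbits of linearly independent quadruples $\{v_1,\dots,v_4\} \subset \{0,1\}^4 \setminus \{0\}$, stratified by the multiset of Hamming weights. The list is short because there is only one weight-$4$ vector and four each of weights $1$ and $3$. For each orbit, solve for $\lambda$ and check strict positivity of every $\lambda_j$ together with $\lambda_0 = 1 - \sum_{j \neq 0}\lambda_j$. The only surviving orbit is the all-weight-$3$ quadruple $\{e_{123}, e_{124}, e_{134}, e_{234}\}$, giving the representative $\tfrac{1}{3}e_\emptyset + \tfrac{1}{6}(e_{123}+e_{124}+e_{134}+e_{234})$. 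The main obstacle is this $k = 5$ enumeration: it is finite and tractable, but requires careful bookkeeping of $S_4$-orbits across several Hamming-weight multisets together with a positivity check for each representative. This is precisely the short computational verification referenced in the statement.
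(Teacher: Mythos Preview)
The paper does not give a proof; it simply states that the result ``can be verified computationally''. Your case analysis on $|I|$ is a reasonable way to carry out that verification by hand, and your arguments for $|I|=2,3,4$ are correct.

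Your $|I|=5$ step, however, contains a factual error. You assert that after fixing $0\in I$ and passing to $S_4$-orbits of linearly independent quadruples in $\{0,1\}^4\setminus\{0\}$, the \emph{only} orbit surviving the positivity test is the all-weight-$3$ quadruple. This is false: the quadruple $\{e_{12},e_{13},e_{14},e_{234}\}$ (Hamming-weight multiset $\{2,2,2,3\}$) gives $\lambda_{12}=\lambda_{13}=\lambda_{14}=\tfrac16$, $\lambda_{234}=\tfrac13$, and $\lambda_\emptyset=1-\tfrac56=\tfrac16$, all strictly positive. So a second $S_4$-orbit survives. The conclusion of the proposition is not endangered, because this representation lies in the same $B_4$-orbit as the listed one (flip coordinate~$1$ to obtain support $\{e_1,e_2,e_3,e_4,e_{1234}\}$, then flip all four coordinates to obtain $\{e_\emptyset,e_{123},e_{124},e_{134},e_{234}\}$, with the weight $\tfrac13$ landing on $e_\emptyset$ as required). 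But your enumeration must exhibit \emph{both} surviving $S_4$-orbits and then supply this extra $B_4$-identification. The underlying issue is that the sentence ``assume $0\in I$, leaving the residual $S_4$-action'' does not reduce the problem to $S_4$-orbits: a single $B_4$-orbit of supports can meet the condition $0\in I$ in several $S_4$-inequivalent ways (here, in exactly two), so your argument needs one more matching step that the write-up currently omits.
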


Let $\cJ$ be the set of all supports of convex circuit representations from Proposition~\ref{prop:n4-circuits}.
We now state the result for $n = 4$.

\begin{theorem}\label{thm:n4-charac}
A function $p : \{0,1\}^4 \to \R \cup \{\infty\}$ is a valuated $\Delta$-matroid on $[4]$ if and only if both of the following hold:
\begin{enumerate}
    \item $p$ restricts to a valuated $\Delta$-matroid on each facet of $[0,1]^4$.
    \item For $x = (1/2,1/2,1/2,1/2)$, the minimum
    \[
    \min_{J \in \cJ} p(x, J)
    \]
    is achieved on at least two of its arguments.
\end{enumerate}
\end{theorem}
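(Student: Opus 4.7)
The plan is to prove both directions contrapositively, using Proposition~\ref{prop:cellcharacterization} to translate condition (2) into a statement about which simplices can appear in $\cS_p$ through the centroid $x = (1/2,1/2,1/2,1/2)$, and reducing condition (1) to restriction to facets.

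For the forward direction, (1) is immediate from the observation recorded just before the theorem that restricting a valuated $\Delta$-matroid to a face of $[0,1]^n$ yields a valuated $\Delta$-matroid. For (2), I argue contrapositively: suppose the minimum $\min_{J \in \cJ} p(x,J)$ is uniquely attained at some $J^* \in \cJ$. Each such $J^*$ is affinely independent, and by Proposition~\ref{prop:n4-circuits} the point $x$ lies in the relative interior of $\conv J^*$ with $J^*$ as its circuit support; moreover $\cJ$ is \emph{precisely} the collection of circuit supports of $x$. Hence Proposition~\ref{prop:cellcharacterization}(3) applied with $I = J^*$ puts $\conv J^*$ in $\cS_p$ as a simplex, so every pair of vertices of $J^*$ gives an edge of $\cS_p$. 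A direct inspection of the three orbits from Proposition~\ref{prop:n4-circuits} shows that each such simplex contains an edge of length at least $3$: the long diagonal of length $4$ for the first orbit, and four length-$3$ edges (for example, from $e_\emptyset$ to each of $e_{123}, e_{124}, e_{134}, e_{234}$) in each of the second and third orbits. Any such edge contradicts $p$ being a valuated $\Delta$-matroid.

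For the backward direction, suppose $\cS_p$ has an edge $e$ of length $>2$. If $e$ has length $3$, then its endpoints agree in exactly one coordinate, so $e$ lies in a facet $F$ of $[0,1]^4$; since the restriction of $\cS_p$ to $F$ equals $\cS_{p|_F}$, this violates condition (1). If $e$ has length $4$, then $e = [e_S, e_{S^C}]$ for some $S \subseteq [4]$, with midpoint $x$, and Proposition~\ref{prop:cellcharacterization}(1)$\implies$(2) yields $p(x,\{e_S,e_{S^C}\}) < p(x,J)$ for every other circuit support $J$ of $x$, i.e., every $J \in \cJ$ different from $\{e_S,e_{S^C}\}$. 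Thus the minimum in (2) is uniquely attained, violating (2).

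The only substantial step is the combinatorial verification in the forward direction that the simplices arising from the $4$- and $5$-point orbits of $\cJ$ necessarily contain an edge of length $\geq 3$; this is a short check from the explicit representatives in Proposition~\ref{prop:n4-circuits}. The rest is a direct translation between the combinatorial data in $\cJ$ and the cells of $\cS_p$ supplied by Proposition~\ref{prop:cellcharacterization}.
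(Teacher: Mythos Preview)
Your proof is correct and follows essentially the same contrapositive strategy as the paper's: use Proposition~\ref{prop:cellcharacterization} to translate between a uniquely attained minimum over $\cJ$ and a simplicial cell of $\cS_p$ through $x$, and handle length-$3$ edges via the facet restriction. One small slip: your parenthetical example ``from $e_\emptyset$ to each of $e_{123}, e_{124}, e_{134}, e_{234}$'' applies only to the third orbit, since the representative $\{e_\emptyset, e_{14}, e_{123}, e_{234}\}$ of the second orbit contains neither $e_{124}$ nor $e_{134}$; the conclusion is unaffected, as $[e_\emptyset, e_{123}]$ is a length-$3$ edge in both representatives.
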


\begin{proof}
    Let $\cJ = \cJ_1 \sqcup \cJ_2 \sqcup \cJ_2$, where $\cJ_1$, $\cJ_2$, $\cJ_3$ are the orbits of $\{\emptyset,1234\}$, $\{\emptyset,14,123,234\}$, and $\{\emptyset,123,124,134,234\}$, respectively, under the action of $B_4$. We again prove the contrapositive in both directions.
    
    Suppose $p : \{0,1\}^4 \to \R \cup \{\infty\}$ is not a valuated $\Delta$-matroid. Then its induced subdivision $\cS_p$ either it has an edge of length 3 or an edge of length 4. If it has an edge of length 3, then this edge is contained in some facet of $[0,1]^4$. Therefore (1) is not true, as desired. Suppose $\cS_p$ has an edge of length 4. The set of endpoints of this edge is an element $J \in \cJ_1$. Therefore, by Proposition~\ref{prop:cellcharacterization}, the minimum $\min_{J \in \cJ} p(x, J)$ is achieved uniquely on $J$. Hence (2) is not true, proving one direction of Theorem~\ref{thm:n4-charac}.

    For the other direction, suppose that one of (1) or (2) is not true. If (1) is not true, then some facet of $[0,1]^4$ contains an edge of length 3, so $p$ is not a valuated $\Delta$-matroid. Suppose (2) is not true. Let $J \in \cJ$ be the argument at which the minimum is achieved. If $J \in \cJ_1$, then by Proposition~\ref{prop:cellcharacterization} $\cS_p$ contains an edge of length 4, so $p$ is not a valuated $\Delta$-matroid. If $J \in \cJ_2$ or $\cJ_3$, then $\cS_p$ contains an image of $[e_\emptyset, e_{123}]$ under $B_n$, and hence an edge of length 3. So $p$ is not a valuated $\Delta$-matroid, completing the proof.
\end{proof}

\section{A finite characterization for all $n$}\label{sec:characterization-for-all-n}

Here we show that in order to check if $(p_S)_{S\subseteq [n]}$ is a valuated $\Delta$-matroid, it is sufficient to only consider the $4$-dimensional faces of $[0,1]^n$. This is analogous to ordinary valuated matroids, where only the 3-dimensional faces of the hypersimplex need to be considered.

\begin{proposition}\label{prop:local-edge-length}
    Let $\cS$ be a subdivision of $[0,1]^n$ with vertices in $\{0,1\}^n$. Let $k$ be a positive integer. If $\cS$ has no edges of length $\ell$ for all $k < \ell \leq 2k$, then it has no edges of length $> 2k$.
\end{proposition}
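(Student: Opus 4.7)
The plan is to prove the contrapositive: assuming $\cS$ has some edge of length greater than $2k$, I will exhibit an edge of length in $(k, 2k]$. I argue by contradiction. Suppose that $\cS$ has no edge of length in $(k, 2k]$ and yet has some edge of length exceeding $2k$; let $\ell^* > 2k$ be the smallest such length, and choose an edge $[u,v]$ of $\cS$ with $|u \Delta v| = \ell^*$. Let $F$ be the minimal face of $[0,1]^n$ containing $[u,v]$, which is an $\ell^*$-dimensional subcube. The restriction $\cS|_F$ is a subdivision of $F$ whose edges are precisely the edges of $\cS$ contained in $F$; in particular it still has no edges of length in $(k, 2k]$, and by the minimality of $\ell^*$ any edge of $\cS|_F$ of length greater than $2k$ has length at least $\ell^*$. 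Identifying $F$ with $[0,1]^{\ell^*}$ and flipping coordinates $x_i \mapsto 1-x_i$ as needed, we may assume $u = 0$ and $v = \mathbf{1}$. Since $k \geq 1$ forces $\ell^* \geq 3$, the edge $[0,\mathbf{1}]$ is a face of some full-dimensional cell $C$ of $\cS|_F$, and then of some 2-face $F'$ of $C$.

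The main obstacle is the structural claim that $F'$ must be a triangle of the form $\{0, \mathbf{1}, w\}$. The polygon $F'$ lies in a 2-plane $\Pi$ containing $0$ and $\mathbf{1}$. Since the involution $x \mapsto \mathbf{1} - x$ fixes the midpoint $\mathbf{1}/2 \in \Pi$ and preserves $\Pi$, the 0-1 points of $\Pi$ come in antipodal pairs $\{w, \mathbf{1}-w\}$. A coordinate-by-coordinate analysis of affine relations among four 0-1 points shows that if $0, \mathbf{1}, w_1, w_2$ are distinct points lying in a common 2-plane with $w_1, w_2 \notin \{0, \mathbf{1}\}$, then necessarily $w_2 = \mathbf{1} - w_1$; applying this to any potential third non-trivial point forces it to coincide with $w_1$ or $\mathbf{1}-w_1$. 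Hence $\Pi \cap \{0,1\}^{\ell^*}$ consists of at most four points, and since $F'$ is 2-dimensional we must have $\Pi \cap \{0,1\}^{\ell^*} = \{0, \mathbf{1}, w, \mathbf{1}-w\}$ for some $w$, forming a parallelogram with $[0,\mathbf{1}]$ as a diagonal. The antipodes $w$ and $\mathbf{1}-w$ lie on opposite sides of the line through $0$ and $\mathbf{1}$ in $\Pi$, while the vertices of $F'$ other than $0$ and $\mathbf{1}$ must lie on a single side of this line (because $[0,\mathbf{1}]$ is an edge, not a diagonal, of $F'$). Therefore $F'$ is the triangle $\{0, \mathbf{1}, w\}$ (or $\{0, \mathbf{1}, \mathbf{1}-w\}$).

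With $F'$ a triangle, the contradiction is immediate. Its other two edges, $[0,w]$ and $[\mathbf{1}, w]$, have lengths $|w|$ and $\ell^* - |w|$, both in $\{1, \ldots, \ell^* - 1\}$. Each is an edge of $\cS$, so by hypothesis its length is either at most $k$ or greater than $2k$; but by the minimality of $\ell^*$, any length greater than $2k$ must be at least $\ell^*$, which is impossible as both lengths are strictly less than $\ell^*$. Hence both lengths are at most $k$, giving $\ell^* = |w| + (\ell^* - |w|) \leq 2k$, contradicting $\ell^* > 2k$.
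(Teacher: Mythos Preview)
Your proof is correct and follows essentially the same approach as the paper: restrict to the minimal face containing a shortest edge of length $>2k$, pick a 2-cell containing this diagonal, argue that the 2-cell must be a triangle (since the only 0-1 points in its affine span are $0$, $\mathbf{1}$, $w$, $\mathbf{1}-w$, and the last two cannot both appear when $[0,\mathbf{1}]$ is an edge), and derive a contradiction from the lengths of the other two sides. The only cosmetic difference is that the paper first bounds one side by $k$ and then shows the opposite side has length in $(k,m)$, whereas you bound both sides by $k$ and sum; the content is the same.
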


\begin{proof}
Suppose for the sake of contradiction that $\cS$ has no edges of length $k < \ell \leq 2k$ and an edge of length $m > 2k$. Choose the smallest such edge $e$. By applying symmetry, we may assume $e$ has endpoints $e_{\emptyset}$ and $e_{[m]}$. Now let $\cS'$ be the restriction of $\cS$ to the face $[0,1]^m$ of $[0,1]^n$. Consider a $2$-dimensional cell $F$ of $\cS'$ which contains $e$. The polygon $F$ contains an edge between $e_{\emptyset}$ and some point of $\{0,1\}^m$ other than $e_{[m]}$, say $e_S$. By assumption $e$ is the shortest edge of length $> 2k$ and $\cS$ has no edges of length $k < \ell \leq 2k$, so $|S|\leq k$. Since $\dim F = 2$, we have that $F$ lies in the linear span of $e_S$ and $e_{[m]}$. Hence, the only other possible vertex of $F$ is $e_{[m]\setminus S}$.  Since $[0,e_{[m]}]$ is an edge, $F$ cannot contain $e_{[m]\setminus S}$, since $[e_{\emptyset},e_{[m]}]$ and $[e_S,e_{[m]\setminus S}]$ have the same midpoint.
Thus $F$ only has three vertices, so it contains the edge $[e_S,e_{[m]}]$. This edge has length $m-|S| > 2k-|S| \ge k$, and also length strictly less than $m$, a contradiction.  
\end{proof}

Applying \cref{prop:local-edge-length} with $k=2$ gives the following immediate corollary.
\begin{corollary}
    If a subdivision of $[0,1]^n$ with vertices in $\{0,1\}^n$ has no edges of length $3$ or $4$, then it has no edges of length $> 4$.
\end{corollary}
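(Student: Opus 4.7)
The plan is to derive the corollary as the $k=2$ specialization of Proposition~\ref{prop:local-edge-length}. Substituting $k=2$ into the hypothesis ``no edges of length $\ell$ for all $k < \ell \leq 2k$'' yields exactly ``no edges of length $3$ or $4$'', and the conclusion ``no edges of length $> 2k$'' becomes ``no edges of length $> 4$''. So the corollary falls out in a single invocation.

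The only thing I need to check is that $k=2$ is admissible, which it is since the proposition is stated for an arbitrary positive integer $k$. There is no genuine obstacle to overcome: the combinatorial heart of the argument---choosing a minimal hypothetical counterexample edge $[e_\emptyset, e_{[m]}]$ with $m > 2k$ and analyzing any $2$-dimensional cell of the subdivision containing it---has already been carried out in the proof of Proposition~\ref{prop:local-edge-length}. The value $k=2$ is the one relevant to valuated $\Delta$-matroids, since by the earlier discussion a subdivision of $[0,1]^n$ with vertices in $\{0,1\}^n$ comes from a valuated $\Delta$-matroid exactly when all its edges have length at most $2$, so the forbidden edge lengths begin at $3$.
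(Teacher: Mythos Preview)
Your proposal is correct and matches the paper's approach exactly: the paper states the corollary as an immediate consequence of Proposition~\ref{prop:local-edge-length} with $k=2$.
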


This gives us a characterization of valuated $\Delta$-matroids whose effective domain is the full cube.

\begin{corollary}
A function $p : \{0,1\}^n \to \R$ is a valuated $\Delta$-matroid if and only if its restriction to every 4-dimensional face of $[0,1]^n$ is a valuated $\Delta$-matroid.
\end{corollary}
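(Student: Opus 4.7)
The plan is to reduce the statement to the previous corollary (no edges of length $3$ or $4$ implies no edges of length $>4$) together with the observation that every short edge of $\cS_p$ is actually contained in a low-dimensional face of the cube.

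For the forward direction, if $p$ is a valuated $\Delta$-matroid, then by the remark at the end of Section~1 its restriction to any face of $[0,1]^n$ is also a valuated $\Delta$-matroid; in particular, this holds for every $4$-dimensional face.

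For the reverse direction, assume that the restriction of $p$ to every $4$-dimensional face of $[0,1]^n$ is a valuated $\Delta$-matroid, and suppose for contradiction that $\cS_p$ contains an edge $e$ of length $\ell \ge 3$. By the previous corollary (applied with $k=2$), absence of edges of length $3$ and $4$ forces absence of all longer edges, so it suffices to derive a contradiction in the cases $\ell \in \{3,4\}$. Write the endpoints of $e$ as $e_S$ and $e_T$ with $|S\Delta T|=\ell$. Then $e$ lies inside the face $F$ of $[0,1]^n$ obtained by fixing each coordinate in $[n]\setminus(S\Delta T)$ to its common value on $e_S$ and $e_T$; this face has dimension $\ell \le 4$, so $F$ is contained in some $4$-dimensional face $F'$ of the cube. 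The induced subdivision $\cS_{p|_{F'\cap\{0,1\}^n}}$ is precisely the restriction of $\cS_p$ to $F'$ (a standard property of regular subdivisions, easily seen from the lifting construction: restricting $p$ to $F'\cap\{0,1\}^n$ intersects the lifted polytope $P_p$ with $F'\times\R$, and lower faces restrict to lower faces). Hence $e$ appears as an edge in the induced subdivision of $p|_{F'\cap\{0,1\}^n}$, contradicting the assumption that this restriction is a valuated $\Delta$-matroid.

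There is no real obstacle; the only point requiring mild care is verifying that the induced subdivision commutes with restriction to a face of the cube, and that an edge of length $\ell$ indeed lies in an $\ell$-dimensional coordinate face (which is immediate from the definition of $e_S$ and $e_T$ above). Everything else is a direct invocation of the previous corollary and the remark on restrictions.
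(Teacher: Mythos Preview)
Your proof is correct and follows exactly the approach the paper intends: the paper states this corollary without proof, treating it as an immediate consequence of the preceding corollary (no edges of length $3$ or $4$ implies none of length $>4$), and you have correctly spelled out the two missing ingredients --- that an edge of length $\ell\le 4$ sits inside an $\ell$-dimensional coordinate face, and that the regular subdivision induced by $p$ restricts to the regular subdivision induced by $p|_{F'}$ on any face $F'$ of the cube. There is nothing to add.
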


For general valuated $\Delta$-matroids, we have the following characterization.

\begin{theorem}
A function $p :\{0,1\}^n \to \R\cup \{\infty\}$ is a valuated $\Delta$-matroid if and only if $\dom p$ is a $\Delta$-matroid and the restriction of $p$ to every 4-dimensional face of $[0,1]^n$ is a valuated $\Delta$-matroid.
\end{theorem}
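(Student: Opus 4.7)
The plan is to prove the two implications separately. The forward direction is essentially immediate from the remarks preceding \Cref{prop:cellcharacterization}: a restriction of a valuated $\Delta$-matroid to a face of $[0,1]^n$ remains a valuated $\Delta$-matroid, and the edges of $\conv(\dom p)$ are $1$-cells of $\cS_p$, so they have length at most $2$, making $\dom p$ a $\Delta$-matroid. The substance lies in the converse.

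For the converse, assume conditions (i) and (ii). My first step is a local-to-global observation: for any face $G$ of $[0,1]^n$, the edges of $\cS_p$ contained in $G$ are exactly the edges of $\cS_{p|_G}$. One direction follows directly from \Cref{prop:cellcharacterization}(3); the other uses that if $x \in G$, then every convex circuit representation of $x$ has support in $G \cap \{0,1\}^n$, because if a coordinate of $x$ is fixed at $0$ (resp.\ $1$) then every point of the support must have that coordinate equal to $0$ (resp.\ $1$). Since every edge of length $3$ or $4$ lies in some $4$-dimensional face of $[0,1]^n$, this observation combined with (ii) immediately rules out all length-$3$ and length-$4$ edges of $\cS_p$.

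It then suffices to rule out edges of length greater than $4$, which I would do by adapting \Cref{prop:local-edge-length}. Take a shortest such edge $e$; after applying $B_n$-symmetry, write $e = [e_\emptyset, e_{[m]}]$ with $m > 4$. Condition (i) says $\conv(\dom p)$ has no edge of length greater than $2$, and the same holds for its face $\conv(\dom p \cap [0,1]^m)$, so $e$ is interior to $\cS_{p|_{[0,1]^m}}$ and is contained in some $2$-dimensional cell $F$. Let $e_S$ (with $S \subseteq [m]$, $S \notin \{\emptyset, [m]\}$) be the other endpoint of the second edge of $F$ at $e_\emptyset$. Exactly as in \Cref{prop:local-edge-length}, the only $0/1$ points in $\mathrm{aff}(F)$ are $\{e_\emptyset, e_S, e_{[m]}, e_{[m] \setminus S}\}$, and the parallelogram option is ruled out because it would make $e$ a diagonal, so $F$ is the triangle on $e_\emptyset, e_S, e_{[m]}$. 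By the observation above, both $[e_\emptyset, e_S]$ and $[e_S, e_{[m]}]$ are edges of $\cS_p$, of lengths $|S|$ and $m - |S|$, both strictly less than $m$. Minimality of $e$ forces each of these lengths to be at most $4$, and the absence of length-$3$ and length-$4$ edges then forces each to be at most $2$; but $|S| + (m - |S|) = m > 4$, a contradiction. The main point requiring care is that condition (i) alone (not (ii)) is what guarantees $e$ is interior to the relevant restricted subdivision so that the \Cref{prop:local-edge-length} argument can be run; after that, everything is combinatorial and follows the template of \Cref{prop:local-edge-length}.
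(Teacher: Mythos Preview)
Your proposal is correct and follows essentially the same approach as the paper's proof. The only notable difference is in how you justify that $e$ lies in a $2$-dimensional cell of $\cS_{p|_{[0,1]^m}}$: you use the edge-length characterization of $\Delta$-matroid polytopes (so $e$, having length $m>2$, cannot be an edge of $\conv(\dom p\cap[0,1]^m)$), whereas the paper invokes the symmetric exchange axiom directly to produce a third basis $\{1,j\}\subseteq[m]$ in $\dom p$; these are equivalent. Your phrase ``$e$ is interior to $\cS_{p|_{[0,1]^m}}$'' is slightly loose (the relative interior of $e$ need not lie in the interior of the support polytope), but the conclusion you draw---that $\dim\conv(\dom p\cap[0,1]^m)\ge 2$ and hence $e$ sits in a $2$-cell---is correct.
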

\begin{proof}
    One direction is immediate: if $p$ is a valuated $\Delta$-matroid, then $\conv \dom p$ has no edges of length $ > 2$ and hence is a $\Delta$-matroid polytope. Moreover, the restriction of $p$ to any 4-face of the cube is also a valuated $\Delta$-matroid.

    To show the reverse direction, let $p :\{0,1\}^n \to \R\cup \{\infty\}$ be a function such that $\dom p$ is a $\Delta$-matroid and the restriction of $p$ to every 4-dimensional face of $[0,1]^n$ is a valuated $\Delta$-matroid. Suppose $\cS_p$ has an edge of length greater than or equal to 5, and let $e$ be such an edge of minimum length $m$. We may assume e has endpoints $e_{\emptyset}$ and $e_{[m]}$. Let $\cS'$ be the restriction of $\cS$ to the face $[0,1]^m$ of $[0,1]^n$. Since $\dom p$ is a $\Delta$-matroid and $\varnothing$, $[m] \in \dom p$, by the symmetric exchange property there exists some $j\in[m]$ such that $\{1,j\} \in \dom p$. In particular, $\cS'$ is not the single edge $e$, and therefore $e$ is contained in a two-dimensional cell of $\cS'$. The same argument as in \cref{prop:local-edge-length} leads to a contradiction.
\end{proof}

\section{The rank function}
One way to characterize matroids is via rank functions. In fact, the rank function of any matroid gives a valuated matroid \cite[Proposition 4.4]{speyer2004tropical}, so in particular, any matroid polytope appears as a cell in some matroidal subdivision.

Let $M = ([n],\mathcal{F})$ be a $\Delta$-matroid, where $\mathcal{F}$ is the set of bases.
Following \cite[Section 2.2]{larson2023rank}, we define the rank function for a $M$ as $r_M : 2^{[n]} \to \mathbb{Z}_{\geq 0}$ where
\[
    r_M(S) = \max_{B\in\mathcal{F}} ( |B\cap S| + |B^C \cap S^C| ).
\]
By construction, $r_M(S) \leq n$ for all $S\subseteq[n]$, with equality if and only if $S\in\mathcal{F}$.

In this section, we show that if $M$ is a $\Delta$-matroid, then $-r_M$ is a valuated $\Delta$-matroid. Thus, just as in the matroid case, any $\Delta$-matroid polytope appears as a cell in some $\Delta$-matroidal subdivision.

First, we give a geometric interpretation of $r_M$.
\begin{proposition}
    The rank function $r_M(S)$ is equal to
    \[
        r_M(S) = n - \min_{B\in\mathcal{F}} |e_B - e_S|_1.
    \]
     That is, $r_M(S)$ also has a geometric interpretation in terms of how far $e_S$ is from the closest vertex of $P_M$.
\end{proposition}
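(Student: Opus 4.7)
The plan is to show that the two expressions for $r_M(S)$ are equal by a direct set-theoretic identity, with no induction or $\Delta$-matroid theory required: the statement holds for any family $\mathcal{F}$ of subsets of $[n]$.

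First I would translate $|e_B - e_S|_1$ into set-theoretic language. Since $e_B$ and $e_S$ are $0$-$1$ vectors, they differ in coordinate $i$ exactly when $i$ belongs to $B \Delta S$, so
\[
|e_B - e_S|_1 \;=\; |B \Delta S| \;=\; |B \setminus S| + |S \setminus B|.
\]
Next I would observe that the complement of $B \Delta S$ in $[n]$ decomposes as $(B \cap S) \sqcup (B^C \cap S^C)$, so
\[
n - |e_B - e_S|_1 \;=\; n - |B \Delta S| \;=\; |B \cap S| + |B^C \cap S^C|.
\]

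Finally, taking the maximum of the right-hand side over $B \in \mathcal{F}$ is equivalent to taking the minimum of $|e_B - e_S|_1$ over $B \in \mathcal{F}$ (and adding $n$), giving
\[
r_M(S) \;=\; \max_{B \in \mathcal{F}} \bigl( |B \cap S| + |B^C \cap S^C| \bigr) \;=\; n - \min_{B \in \mathcal{F}} |e_B - e_S|_1,
\]
as claimed. There is no real obstacle here; the only thing to be careful about is the identity $[n] \setminus (B \Delta S) = (B \cap S) \cup (B^C \cap S^C)$, which is a standard exercise in Boolean set algebra.
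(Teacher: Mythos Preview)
Your proof is correct and follows essentially the same approach as the paper: both arguments reduce to the set-theoretic identity $|B\cap S| + |B^C\cap S^C| + |B\setminus S| + |S\setminus B| = n$ (equivalently, that $[n]\setminus(B\Delta S) = (B\cap S)\sqcup(B^C\cap S^C)$) together with the observation that $|e_B - e_S|_1 = |B\Delta S|$.
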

\begin{proof}
    We note that $|e_B - e_S|_1 = |B\setminus S| + |S\setminus B|$. Since $|B\cap S| + |B^C \cap S^C| + |B\setminus S| + |S\setminus B| = n$, then
    \begin{align*}
        r_M(S) &= \max_{B\in\mathcal{F}} ( |B\cap S| + |B^C \cap S^C| )\\
            &= \max_{B\in\mathcal{F}}\left(n - \left(|B\setminus S| + |S\setminus B|\right)\right)\\
            &= n - \min_{B\in\mathcal{F}}(|B\setminus S| + |S\setminus B|)\\
            &= n - \min_{B\in\mathcal{F}} |e_B-e_S|_1.
    \end{align*}
\end{proof}

In particular, this means that $|r_M(S) - r_M(S\Delta i)| \leq 1$ by the triangle inequality, so adding or removing a single element from $S$ changes the rank by at most one.

\begin{lemma}\label{lem:if-rank-max-unique-then-feasible}
    Suppose $n \geq 3$, and let $M=([n], \mathcal{F})$ be a $\Delta$-matroid. If $\max_{S\subseteq[n]}(r_M(S) + r_M(S^C))$ is attained uniquely by some $S \subseteq [n]$, then $S,S^C \in \mathcal{F}$.
\end{lemma}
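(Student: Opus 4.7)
The plan is to work geometrically, using $r_M(S) = n - \min_{B \in \mathcal{F}} |e_B - e_S|_1$ from the preceding proposition, which recasts the problem as minimizing $\min_{B \in \mathcal{F}} |e_B - e_S|_1 + \min_{B' \in \mathcal{F}} |e_{B'} - e_{S^C}|_1$ over $S$. Since this expression is invariant under $S \leftrightarrow S^C$, any maximizer $S$ is automatically accompanied by the maximizer $S^C$, so ``uniquely attained by some $S$'' should be read as saying the maximum is achieved on exactly one complementary pair $\{S, S^C\}$. The aim is to show $S \in \mathcal{F}$; the same argument applied with $S^C$ in place of $S$ then gives $S^C \in \mathcal{F}$.

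I would proceed by contradiction. Suppose $S \notin \mathcal{F}$, so $\min_{B \in \mathcal{F}} |e_B - e_S|_1 \geq 1$. Pick $B \in \mathcal{F}$ realizing this minimum and any coordinate $i \in B \Delta S$, and set $S' := S \Delta \{i\}$. The bit flip toward $B$ gives $|e_B - e_{S'}|_1 = |e_B - e_S|_1 - 1$, so the first minimum drops by at least $1$. On the other side, $(S')^C = S^C \Delta \{i\}$, hence $|e_{B'} - e_{(S')^C}|_1 \leq |e_{B'} - e_{S^C}|_1 + 1$ for every $B' \in \mathcal{F}$ by the triangle inequality; taking minima, the second minimum grows by at most $1$. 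Summing these inequalities gives $r_M(S') + r_M((S')^C) \geq r_M(S) + r_M(S^C)$, and maximality forces equality, so $S'$ is also a maximizer.

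The crux is then checking $S' \notin \{S, S^C\}$: by construction $S' \neq S$, and $S' = S^C$ would demand that toggling the single coordinate $i$ equal toggling every coordinate of $S$, forcing $n = 1$. So for $n \geq 3$ the set $S'$ together with its complement $(S')^C$ provides a second complementary pair attaining the maximum, contradicting uniqueness. Hence $S \in \mathcal{F}$, and applying the same argument with $S^C$ in place of $S$ shows $S^C \in \mathcal{F}$.

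The main obstacle I expect is the careful reading of ``uniquely attained'' and the final verification that the new maximizer $S'$ is not merely the symmetric partner $S^C$ of the original; this is exactly where the hypothesis $n \geq 3$ (really $n \geq 2$) is used. The argument is otherwise a clean application of the triangle inequality in the Hamming metric, and notably does not invoke the $\Delta$-matroid exchange axiom.
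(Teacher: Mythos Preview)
Your argument is correct and is essentially identical to the paper's proof: both pick a nearest $B\in\mathcal{F}$, flip a coordinate $i\in S\Delta B$ to produce $S'=S\Delta\{i\}$, observe that the rank of $S'$ goes up by one while the rank of $(S')^C$ drops by at most one, and then use $n\ge 3$ (in fact $n\ge 2$ suffices, as you note) to rule out $S'\in\{S,S^C\}$. Your explicit discussion of the $S\leftrightarrow S^C$ symmetry in reading ``uniquely attained'' and your remark that the $\Delta$-matroid exchange axiom is never invoked are both accurate.
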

\begin{proof}
    Assume $r_M(S) + r_M(S^C)$ attains its maximum at $S$ and suppose $S\notin\mathcal{F}$. We wish to show that $\max_{S\subseteq[n]}(r_M(S) + r_M(S^C))$ is not attained uniquely.
    
    Let $B\in\mathcal{F}$ be a set in $\mathcal{F}$ closest to $S$ (i.e.\ $B$ attains $\min_{B\in\mathcal{F}}|e_B-e_S|_1$), and let $i \in S\Delta B$. Then $|e_B - e_{S\Delta i}| = |e_B - e_S| - 1$, so $r(S\Delta i) = r(S) + 1$. Moreover, adding or removing an element can drop the rank by at most one, so $r(S^C \Delta i) \geq r(S^C) - 1$. Since $S^C\Delta i = (S\Delta i)^C$,
    \[
        r(S\Delta i) + r((S\Delta i)^C) \geq r(S) + r(S^C).
    \]
    We know that $S\Delta i \neq S$, and since $n \geq 3$, we have $S\Delta i \neq S^C$, so $r_M(S)+r_M(S^C)$ cannot attain $\max_{S\subseteq [n]}(r_M(S) + r_M(S^C))$ uniquely.
\end{proof}

Now, we are ready to show that $-r_M$ is a valuated $\Delta$-matroid.

\begin{theorem}
    Let $M = ([n], \mathcal{F})$ be a $\Delta$-matroid and let $r_M:2^{[n]} \to \Z_{\geq 0}$ be its rank function. Then $-r_M$ is a valuated $\Delta$-matroid.
\end{theorem}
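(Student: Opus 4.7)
Since $\dom(-r_M) = \{0,1\}^n$ is trivially a $\Delta$-matroid, the main characterization of \Cref{sec:characterization-for-all-n} reduces the claim to showing that the induced subdivision $\cS_{-r_M}$ has no edges of length $3$ or $4$. For any $F \subseteq [n]$, the twist $M\Delta F := ([n], \{B\Delta F : B\in\mathcal F\})$ is a $\Delta$-matroid whose rank function satisfies $r_{M\Delta F}(S) = r_M(S\Delta F)$, so $\cS_{-r_{M\Delta F}}$ is the bit-flip of $\cS_{-r_M}$ by $F$. Combined with permutations of $[n]$ (which relabel $M$), twisting acts transitively on length-$m$ edges of $[0,1]^n$. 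We may therefore assume, for contradiction, that $[e_\emptyset, e_{[m]}]$ is an edge of $\cS_{-r_M}$ for some $m\in\{3,4\}$.

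By \Cref{prop:cellcharacterization} applied at $x = \tfrac12(e_\emptyset + e_{[m]})$, for every convex circuit representation $x = \sum_j \lambda_j e_{S_j}$ with support $J \neq \{\emptyset, [m]\}$ (forcing $S_j \subseteq [m]$ since $x_i = 0$ for $i>m$),
\[
    \tfrac12\bigl(r_M(\emptyset) + r_M([m])\bigr) \;>\; \sum_j \lambda_j\, r_M(S_j).
\]
Specializing to $J = \{T, [m]\setminus T\}$ for each proper nonempty $T \subsetneq [m]$ yields the \emph{pair inequality} $r_M(\emptyset) + r_M([m]) > r_M(T) + r_M([m]\setminus T)$. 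Adapting the argument of \Cref{lem:if-rank-max-unique-then-feasible}: if some basis $B\in\mathcal F$ closest to $\emptyset$ contained an element $i\in[m]$, then $r_M(\{i\}) = r_M(\emptyset)+1$ while $r_M([m]\setminus\{i\}) \geq r_M([m]) - 1$, contradicting the pair inequality with $T = \{i\}$. So every closest basis to $\emptyset$ is contained in $[n]\setminus[m]$, and symmetrically every closest basis to $[m]$ contains $[m]$.

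The remainder is a combinatorial case analysis. If $\emptyset, [m]\in\mathcal F$, then $r_M(\emptyset) = r_M([m]) = n$; iterated applications of the symmetric exchange axiom to $(\emptyset, [m])$ and $([m], \emptyset)$ for each $a\in[m]$ populate $\mathcal F$ with additional bases inside $[m]$. For $m = 3$ a short case analysis shows that one of the following must occur: some complementary pair $\{T, [3]\setminus T\}\subseteq\mathcal F$ with $T\neq\emptyset, [3]$ (contradicting the pair inequality), all three $2$-subsets of $[3]$ lie in $\mathcal F$, or all three singletons of $[3]$ do—the latter two violating the two non-pair circuit inequalities from \Cref{lem:valuated-delta-matroid-3}. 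For $m = 4$ one carries out the analogous analysis matching the forced bases against the three orbits of circuit representations in \Cref{prop:n4-circuits}. If instead $\emptyset\notin\mathcal F$ or $[m]\notin\mathcal F$, pick closest bases $B_0\subseteq[n]\setminus[m]$ and $B_1\supseteq[m]$ and apply symmetric exchange to $(B_0, B_1)$ with $a\in[m]\subseteq B_0\Delta B_1$. The sub-case $b\in B_0\setminus B_1$ yields the basis $(B_0\setminus\{b\})\cup\{a\}\in\mathcal F$ at minimum distance $|B_0|$ from $\emptyset$ but containing $a\in[m]$, contradicting the closest-basis constraint; the reverse exchange on $(B_1, B_0)$ rules out $b\in B_1\setminus B_0\setminus[m]$; the remaining sub-cases feed the newly produced bases back into the inequalities above.

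The main obstacle is the combinatorial case analysis in the $m = 4$ setting, where one must carefully track which subsets of $[4]$ are forced into $\mathcal F$ by repeated exchange, and show that one of the $\cJ_2$- or $\cJ_3$-type circuit representations is fully supported in $\mathcal F$ whenever no complementary pair is. The guiding principle is nonetheless uniform: the $\Delta$-matroid exchange axiom leaves no room for $\cS_{-r_M}$ to contain an edge of length exceeding~$2$.
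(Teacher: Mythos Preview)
Your reduction to edges of length $3$ or $4$ via \Cref{sec:characterization-for-all-n}, your use of twisting to normalize the edge to $[e_\emptyset, e_{[m]}]$, and your derivation of the pair inequalities are all sound, and in fact parallel the paper closely through \Cref{lem:if-rank-max-unique-then-feasible}. The divergence comes at the decisive step, and it is what makes the paper's argument short while yours remains incomplete.

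The paper never enters the case analysis you outline. Instead it uses a single observation you do not invoke: since $r_M(S) = n$ exactly when $S \in \mathcal{F}$, the $\Delta$-matroid polytope $P_M = \conv\{e_S : S \in \mathcal{F}\}$ is itself a cell of $\cS_{-r_M}$ (it is the face of $P_{-r_M}$ selected by the functional $(0,\dots,0,1)$). Once both endpoints of the long edge are forced to lie in $\mathcal{F}$, the edge is contained in the cell $P_M$ and is therefore an edge of $P_M$; but $P_M$ is a $\Delta$-matroid polytope and has no edges of length exceeding~$2$. This one sentence replaces your entire combinatorial case analysis, for both $m = 3$ and $m = 4$, and makes the exchange-axiom bookkeeping unnecessary.

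Your handling of the case $\emptyset \notin \mathcal{F}$ or $[m] \notin \mathcal{F}$ has a genuine gap. The exchange axiom on $(B_0, B_1)$ with $a \in [m]$ guarantees only the \emph{existence} of some $b \in B_0 \Delta B_1$; it does not let you rule out particular values of $b$. Thus your assertion that ``the reverse exchange on $(B_1, B_0)$ rules out $b \in B_1 \setminus B_0 \setminus [m]$'' does not follow: the reverse exchange produces its own $b'$, independent of $b$. In the surviving sub-cases ($b = a$ or $b \in [m] \setminus \{a\}$) you obtain a new basis $B_0 \cup \{a\}$ or $B_0 \cup \{a,b\}$, but feeding these back into the pair inequalities only yields bounds such as $r_M(\{a\}) \le r_M(\emptyset)$ or $r_M(\{a,b\}) \le r_M([m])$, which are consistent with the setup and do not close the argument. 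The paper avoids this case entirely by first reducing (via restriction to the face containing the edge) to the situation where the edge has full length~$n$, so that \Cref{lem:if-rank-max-unique-then-feasible} applies directly and forces both endpoints into $\mathcal{F}$.
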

\begin{proof}
    If $n = 2$, all functions on $2^{[2]}$ are valuated $\Delta$-matroids, so it suffices to consider the case where $n \geq 3$.

    Let $M = ([n],\mathcal{F})$ be a $\Delta$-matroid and suppose that the subdivision induced by $-r_M$ has an edge of length greater than 2. By replacing $M$ with its restriction to the face of $[0,1]^n$ containing this edge, we may assume that this subdivision contains an edge of length $n$.
    It follows that $\max_{S\subseteq[n]}(r_M(S) + r_M(S^C))$ is attained uniquely. By \cref{lem:if-rank-max-unique-then-feasible}, this implies that it is attained uniquely by some $T,T^C\in\mathcal{F}$.
    
    Since $r_M(S) \leq n$ for all $S$ with equality if and only if $S\in\mathcal{F}$, the $\Delta$-matroid polytope $P_M = \conv\{e_S: S\in\mathcal{F}\}$ must appear as a cell in the induced subdivision, and this cell contains both $e_T$ and $e_{T^C}$. Since $M$ is a $\Delta$-matroid, all edges of $P_M$ have length at most two, so $[e_T,e_{T^C}]$ cannot be an edge of the subdivision, a contradiction.
    Thus, $-r_M$ cannot induce an edge of length $n$, completing the proof.
\end{proof}

\section{Dimension}\label{sec:dim}
We define $\Delta$-$\Dr(n)$ to be the set of all functions $p : \{0,1\}^n \to \R$ which are valuated $\Delta$-matroids. We call this the \emph{$\Delta$-Dressian} on $[n]$. $\Delta$-$\Dr(n)$ is a polyhedral conical complex in the vector space of all functions $p : \{0,1\}^n \to \R$.

In this section we consider the dimension of $\Delta$-$\Dr(n)$; that is, the largest $d$ such that $\Delta$-$\Dr(n)$ contains a subset affinely equivalent to a $d$-dimensional ball. We note that for $n \ge 3$, $\Delta$-$\Dr(n)$ is not a pure polyhedral complex; that is, the maximal cones are not all the same dimension. For example, consider the two subdivisions in \cref{fig:maximal-subdivisions-3}. The functions which induce these subdivisions form (the relative interiors of) two maximal cones in $\Delta$-$\Dr(3)$. However, the cone for the subdivision on the left has dimension seven, while the subdivision on the right is a triangulation, so its cone is full- (eight-) dimensional.

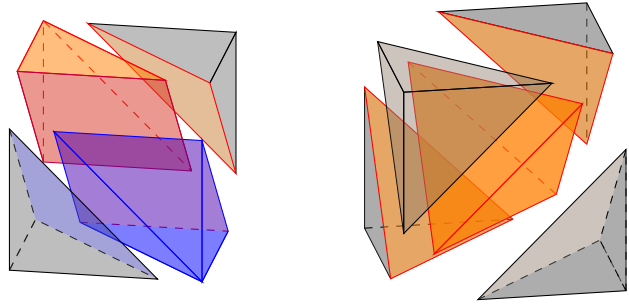
\begin{figure}[h]
    \centering
    \begin{minipage}{.3\textwidth}
        \centering
        \tdplotsetmaincoords{70}{100}
\begin{tikzpicture}[cube/.style={fill=gray, fill opacity=0.3}, simplex/.style={color=red,fill=orange,fill opacity=0.3}, tdplot_main_coords]
    \def\R{2}
    \def\r{0.3}

    \begin{scope}[shift={(-\r,\r,\r)}]
        \fill[cube] (0,0,\R) -- (0,\R,\R) -- (0,\R,0);
        \draw[cube] (0,0,\R) -- (0,\R,\R)-- (\R,\R,\R);
        \draw[cube] (\R,\R,\R) -- (0,\R,\R) -- (0,\R,0);
        \draw[simplex] (0,0,\R)--(0,\R,0)--(\R,\R,\R)--cycle;
    \end{scope}

    \begin{scope}[shift={(-\r,-\r,\r)}]
        \draw[simplex,dashed] (0,0,0) -- (0,0,\R) -- (0,\R,0);
        \draw[simplex] (0,0,\R)--(\R,\R,\R)--(0,\R,0);
        \draw[simplex] (0,0,\R) -- (\R,0,\R) -- (0,0,0);
        \draw[simplex] (0,0,\R) -- (\R,\R,\R) -- (\R,0,\R) -- cycle;
        \draw[simplex, color=purple] (0,0,0) -- (\R,0,\R) -- (\R,\R,\R) -- (0,\R,0)--cycle;
    \end{scope}

    \begin{scope}[shift={(\r, \r, -\r)}, simplex/.append style={color=blue}]
        \fill[simplex, color=purple] (0,0,0) -- (\R,0,\R) -- (\R,\R,\R) -- (0,\R,0)--cycle;
        \draw[simplex, color=purple, dashed] (0,0,0) -- (0,\R,0);

        \draw[simplex] (0,0,0) -- (\R,\R,0) -- (0,\R,0);
        \draw[simplex] (0,0,0) -- (\R,0,\R) -- (\R, \R, 0) -- cycle;
        \draw[simplex] (\R,\R,\R) -- (\R,\R,0) -- (0,\R,0)--cycle;
        \draw[simplex] (\R,\R,\R) -- (\R,0,\R) -- (\R,\R,0)--cycle;
    \end{scope}

    \begin{scope}[shift={(\r,-\r,-\r)}]
        \fill[simplex, color=blue] (\R,0,\R) -- (0,0,0) -- (\R,\R,0)--cycle;
        \draw[cube, dashed](\R,0,\R) -- (0,0,0) --(\R,0,0);
        \draw[cube, dashed] (\R,0,0) -- (0,0,0) -- (\R,\R,0);
        \draw[cube] (\R,0,0) -- (\R,0,\R) -- (\R,\R,0)--cycle;
    \end{scope}
    
\end{tikzpicture}
    \end{minipage}%
    \begin{minipage}{.3\textwidth}
        \centering
  
\tdplotsetmaincoords{70}{80}
\begin{tikzpicture}[cube/.style={fill=gray, fill opacity=0.4}, simplex/.style={color=red, fill=orange, fill opacity=0.5}, tdplot_main_coords]
    \def\R{2}
    \def\r{0.5}

    \begin{scope}[shift={(-\r, -\r, -\r)}]
        \draw[cube] (\R,0,0) --(0,0,0) -- (0,0,\R);
        \fill[cube] (0,\R,0) -- (0,0,0) -- (0,0,\R);
        \fill[cube] (0,\R,0) -- (0,0,0) -- (\R,0,0);
        \draw[cube,dashed] (0,0,0) -- (0,\R,0);
        \draw[simplex] (0,0,\R) -- (0,\R,0) -- (\R,0,0)--cycle;
    \end{scope}

    \begin{scope}[shift={(-\r,\r,\r)}]
        \fill[cube] (0,\R,\R) -- (0,0,\R) -- (0, \R, 0) -- cycle;
        \fill[cube] (0,\R,\R) -- (\R,\R,\R) -- (0, \R, 0) -- cycle;
        \draw[cube, dashed] (0,\R,\R) -- (0,\R,0);
        \draw[cube] (0,\R,\R) -- (\R,\R,\R) -- (0, 0, \R) -- cycle;
        \draw[simplex] (0,0,\R) -- (\R,\R,\R)-- (0,\R,0) -- cycle;
    \end{scope}

    \begin{scope}[shift={(0,0,0)}]
        \fill[simplex] (\R,0,0) -- (0,0,\R) -- (0,\R,0)--cycle;
        \draw[simplex, dashed] (0,0,\R) -- (0,\R,0);

        \fill[simplex] (0,0,\R) -- (\R,\R,\R) -- (0,\R,0)--cycle;

        \draw[simplex](0,0,\R)--(\R,0,0)--(\R,\R,\R)--cycle;
        \draw[simplex](\R,\R,\R) -- (\R,0,0) -- (0,\R,0)--cycle;
    \end{scope}

    \begin{scope}[shift={(\r,-\r,\r)}]
        \fill[simplex, opacity=0.1](0,0,\R)--(\R,0,0)--(\R,\R,\R)--cycle;
        \draw[cube](0,0,\R)--(\R,0,0);

        \draw[cube](0,0,\R)--(\R,0,\R)--(\R,0,0);
        \draw[cube](0,0,\R)--(\R,0,\R)--(\R,\R,\R)--cycle;
        \draw[cube](\R,0,0)--(\R,0,\R)--(\R,\R,\R)--cycle;
    \end{scope}

    \begin{scope}[shift={(\r,\r,-\r)}]
        \fill[simplex, opacity=0.1](\R,\R,\R) -- (\R,0,0) -- (0,\R,0)--cycle;

        \draw[cube, dashed] (\R,0,0) -- (0,\R,0) -- (\R,\R,0);
        \draw[cube] (\R,0,0) -- (\R,\R,0) -- (\R,\R,\R)--cycle;
        \draw[cube, dashed] (\R,\R,0) -- (0,\R,0) -- (\R,\R,\R)--cycle;
    \end{scope}
\end{tikzpicture}
    \end{minipage}
    \caption{Two $\Delta$-matroidal subdivisions of the cube, each corresponding to a maximal cone in $\Delta$-$\Dr(3)$.}
    \label{fig:maximal-subdivisions-3}
\end{figure}

By the above discussion, the dimension of $\Delta$-$\Dr(3)$ is eight, i.e. $\Delta$-$\Dr(3)$ is full-dimensional. Now, we show that the codimension of $\Delta$-$\Dr(4)$ is 1, and that in general, the dimension of $\Delta$-$\Dr(n)$ is exponential in $n$.

\begin{proposition}\label{prop:DimDr4}
    $\Delta$-$\Dr(4)$ has codimension $1$ in $\R^{2^{4}}\cong \R^{16}$.
\end{proposition}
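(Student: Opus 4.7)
The plan is to prove matching upper and lower bounds, $\dim \Delta\text{-}\Dr(4) \leq 15$ and $\dim \Delta\text{-}\Dr(4) \geq 15$.

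For the upper bound, I would apply Theorem~\ref{thm:n4-charac}(2) directly. For every $p \in \Delta\text{-}\Dr(4)$, the minimum $\min_{J \in \cJ} p(x, J)$ at the cube's center $x = (\tfrac{1}{2}, \tfrac{1}{2}, \tfrac{1}{2}, \tfrac{1}{2})$ is achieved on at least two arguments, so $p$ satisfies $p(x, J_1) = p(x, J_2)$ for some distinct pair $J_1, J_2 \in \cJ$. Since distinct $J$'s produce distinct linear functionals of $p$, each such equality cuts out a hyperplane in $\R^{16}$, and there are only finitely many such pairs. Hence $\Delta\text{-}\Dr(4)$ is contained in a finite union of hyperplanes, so $\dim \Delta\text{-}\Dr(4) \leq 15$.

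For the lower bound, I would exhibit a $15$-dimensional subset of $\Delta\text{-}\Dr(4)$. Choose $J_1 = \{\emptyset, 1234\}$ and $J_2 = \{12, 34\}$; the four points $e_\emptyset, e_{12}, e_{34}, e_{1234}$ form a rhombus whose edges all have length $2$, so it is a $\Delta$-matroid polytope, while its two diagonals both have length $4$, so it cannot be $\Delta$-matroidally refined. I would construct a specific $p^*$ lying in the hyperplane $H := \{p : p_\emptyset + p_{1234} = p_{12} + p_{34}\}$ such that (i)~the minimum $\min_{J \in \cJ} p^*(x, J)$ is achieved exactly at $\{J_1, J_2\}$, strictly less than $p^*(x, J)$ for every other $J \in \cJ$; and (ii)~on each of the eight facets $F$ of $[0,1]^4$, the six arguments of Lemma~\ref{lem:valuated-delta-matroid-3} at the facet center are such that the minimum is strictly uniquely achieved at one of the last two (4-element-support) arguments.

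Once such $p^*$ is in hand, an open neighborhood of $p^*$ inside $H$ remains in $\Delta\text{-}\Dr(4)$: condition (i) is preserved because the hyperplane $H$ forces $p(x,J_1)=p(x,J_2)$ while the strict inequalities against other $p(x,J)$ survive small perturbations, and condition (ii) is an open condition. Together with Theorem~\ref{thm:n4-charac}, this gives a $15$-dimensional open subset of $H$ contained in $\Delta\text{-}\Dr(4)$, hence $\dim \Delta\text{-}\Dr(4) \geq 15$.

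The main obstacle is producing $p^*$. I would start by setting $p^*_\emptyset = p^*_{12} = p^*_{34} = p^*_{1234} = 0$ so that $p^* \in H$ with the common value $0$ at $J_1, J_2$, and then assign the remaining twelve values to simultaneously realize the strict inequalities in (i) at the cube center and (ii) on each facet. Exploiting the symmetry that swaps $\{1,2\}\leftrightarrow\{3,4\}$ and preserves the rhombus cuts the system of inequalities down to a small finite list; the hard part is checking that this list is jointly satisfiable, which amounts to ordering the twelve free coordinates so that on every facet one of the 4-element-support arguments of Lemma~\ref{lem:valuated-delta-matroid-3} strictly beats the four length-$3$ arguments, and simultaneously $p^*(x,J) > 0$ for every $J \in \cJ \setminus \{J_1, J_2\}$.
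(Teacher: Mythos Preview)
Your proposal is correct and follows essentially the same approach as the paper: both argue the upper bound via Theorem~\ref{thm:n4-charac}(2) (the minimum over $\cJ$ must tie, forcing $p$ into a finite union of hyperplanes), and both build the lower bound from a point $p^*$ on the hyperplane $H=\{p_\emptyset+p_{1234}=p_{12}+p_{34}\}$ with $p^*_\emptyset=p^*_{12}=p^*_{34}=p^*_{1234}=0$ at which the $\cJ$-minimum is attained exactly twice and each 3-face satisfies Lemma~\ref{lem:valuated-delta-matroid-3} strictly at one of the last two arguments. The paper completes the construction you identify as the ``main obstacle'' by taking $p^*_i=p^*_{ijk}=100$ for all singletons and triples and $p^*_{ij}=1$ for the four remaining pairs; these values make the verification of your conditions (i) and (ii) immediate.
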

\begin{proof}
    Consider the subdivision induced by $p_\emptyset = p_{12} = p_{34} = p_{1234} = 0$, $p_i = p_{ijk} = 100$ for all distinct $i,j,k\in [4]$, and $p_{ij} = 1$ for all $ij \neq 12, 34$. We claim that this induces a $\Delta$-matroid subdivision, and that the associated cone in the $\Delta$-Dressian has codimension $1$.

    First, we claim that this induces a $\Delta$-matroid subdivision. Indeed, since $\min_{S\subseteq [4]} p_S + p_{S^C}$ is obtained by $[e_\emptyset, e_{1234}]$ and $[e_{12}, e_{34}]$, there are no edges of length $4$.
    Moreover, on any $3$-dimensional face of the form $x_l = 0$, setting $\{i,j,k\} = [4] \setminus l$, we have
    \begin{align*}
        2(p_\emptyset + p_{ijk}) &= 200\\
        2(p_m + p_{\{i,j,k\} \setminus m}) &\ge 200 \text{ for all } m \in \{i,j,k\} \\
        p_\emptyset + p_{ij} + p_{ik} + p_{jk} &\le 3 \\
        p_i + p_j + p_k + p_{ijk} &= 400,
    \end{align*}
    so the minimum is attained uniquely by $p_\emptyset + p_{ij} + p_{ik} + p_{jk}$. Similarly, on the $3$-dimensional face $x_l = 1$, setting $\{i,j,k\} = [4] \setminus l$, we have
    \begin{align*}
        2(p_l + p_{1234}) &= 200\\
        2(p_{lm} + p_{[4] \setminus m}) &\ge 200 \text{ for all } m \in \{i,j,k\} \\
        p_l + p_{ijl} + p_{jkl} + p_{ikl} &= 400\\
        p_{il} + p_{jl} + p_{kl} + p_{1234} &\le 3,
    \end{align*}
    so the minimum is attained uniquely by $p_{il} + p_{jl} + p_{kl} + p_{1234}$. Thus, by Lemma~\ref{lem:valuated-delta-matroid-3}, there are no edges of length $3$, so $p$ is a valuated $\Delta$-matroid.

    Moreover, we claim that the functions which induce $\cS_p$ form a cone of codimension $1$ in $\Delta$-$\Dr(4)$. Let $x = (1/2,1/2,1/2,1/2)$. First, we claim that $\min_{J\in\mathcal{J}} p(x,J)$ is attained exactly twice. Indeed, since $p_S > 0$ for all $S \neq \varnothing, 12, 34, 1234$, then $p(x,J) > 0$ for all $J \in \mathcal{J}$ except for $[e_\emptyset, e_{1234}]$ and $[e_{12}, e_{34}]$, and so the minimum is attained exactly twice, as desired.

    Moreover, we showed above that the minimum from Lemma~\ref{lem:valuated-delta-matroid-3} on each three dimensional face is attained uniquely. Hence, the only linear relation satisfied by functions which induce $\cS_p$ is $p_\emptyset + p_{1234} = p_{12} + p_{34}$. Thus, the cone of such functions has codimension 1.

    Finally, $\Delta$-$\Dr(4)$ is not full-dimensional, because any $\Delta$-matroid must satisfy (2) from Theorem~\ref{thm:n4-charac}.
\end{proof}

In fact, the dimension of the $\Delta$-Dressian is exponential in $n$.
\begin{theorem}\label{thm:DimDrBounds}
    The dimension of the $\Delta$-Dressian on $[n]$ is between $\frac{1}{2}2^n$ and $\frac{15}{16}2^n$.
\end{theorem}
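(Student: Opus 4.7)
The plan is to prove the two bounds separately, each via a transparent geometric construction. The upper bound uses the codimension-$1$ restriction coming from \cref{prop:DimDr4}, applied independently on a system of parallel $4$-faces; the lower bound exhibits an explicit family of valuated $\Delta$-matroids whose induced subdivision has a simple combinatorial type.

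\emph{Upper bound.} Assume $n \geq 4$ and fix any $4$-element subset $S_0 \subseteq [n]$. For each $v \in \{0,1\}^{[n]\setminus S_0}$, let $F_v$ denote the $4$-dimensional face of $[0,1]^n$ defined by $x_j = v_j$ for $j \notin S_0$. These $2^{n-4}$ faces partition $\{0,1\}^n$ into disjoint groups of $16$ vertices, involving disjoint coordinates of any function $p : \{0,1\}^n \to \R$. Since the restriction of any valuated $\Delta$-matroid to a face is again a valuated $\Delta$-matroid (as observed in Section~1), for any $p \in \Delta$-$\Dr(n)$ each restriction $p|_{F_v}$ lies in the copy of $\Delta$-$\Dr(4)$ on $F_v$, which by \cref{prop:DimDr4} has dimension $15$. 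Thus $\Delta$-$\Dr(n)$ is contained in the product $\prod_v \Delta$-$\Dr(F_v)$ of dimension $15 \cdot 2^{n-4}$, giving $\dim \Delta$-$\Dr(n) \leq \frac{15}{16} \cdot 2^n$.

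\emph{Lower bound.} I exhibit a $(2^{n-1}+n+1)$-dimensional family in $\Delta$-$\Dr(n)$. For any affine function $\ell:\R^n \to \R$ and any collection of strictly positive reals $\mu_T > 0$ indexed by odd-sized $T \subseteq [n]$, define
\[
p_S \;=\; \ell(e_S) + \begin{cases} 0 & \text{if } |S| \text{ is even,} \\ \mu_S & \text{if } |S| \text{ is odd.} \end{cases}
\]
I claim $\cS_p$ is the subdivision whose cells are the demihypercube $D_n := \conv\{e_S : |S| \text{ even}\}$ together with, for each odd $T$, the corner $n$-simplex $\Sigma_T := \conv\{e_T,e_{T\Delta 1},\ldots,e_{T\Delta n}\}$. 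Each of these is a $\Delta$-matroid polytope: the edges of $D_n$ join even subsets at Hamming distance $2$, and each $\Sigma_T$ has edges of length $1$ from $e_T$ or of length $2$ between pairs of its neighbors. For $n \geq 3$ the parameter-to-$p$ map is injective because the even vertices of $\{0,1\}^n$ affinely span $\R^n$, so the family has dimension $(n+1) + 2^{n-1} \geq \frac{1}{2} \cdot 2^n$.

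\emph{Main obstacle.} The technical heart of the lower bound is verifying $\cS_p$ by exhibiting supporting hyperplanes. The hyperplane $\{y = \ell(x)\}$ contains all even lifts and lies strictly below all odd lifts (since $\mu_T > 0$), supporting $D_n$ as a lower face. For each $\Sigma_T$, the supporting hyperplane is the graph of $\ell + \mu_T h_T$, where $h_T$ is the unique affine function on $\R^n$ with $h_T(e_T) = 1$ and $h_T(e_{T\Delta i}) = 0$ for $i \in [n]$. The key identity, obtained by a short coordinate computation, is
\[
h_T(e_S) \;=\; 1 - |S \Delta T|,
\]
which is $\leq -1$ whenever $e_S$ is not a vertex of $\Sigma_T$. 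Together with $\mu_T > 0$, this yields the required strict separation $\ell(e_S) + \mu_T h_T(e_S) < p_S$; a volume computation then confirms these cells tile $[0,1]^n$, so this subdivision is exactly $\cS_p$.
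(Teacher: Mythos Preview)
Your proof is correct and uses the same two constructions as the paper: a system of $2^{n-4}$ disjoint parallel $4$-faces for the upper bound, and the demihypercube-plus-corner-simplices subdivision for the lower bound. The upper bound argument is identical, only rephrased as an embedding into a product rather than as independent codimension-one constraints.

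The one genuine difference is how the lower bound is \emph{verified}. The paper appeals to its own local characterization: on every $3$- or $4$-face the even-sum vertices already span a full-dimensional cell, so by \Cref{lem:valuated-delta-matroid-3} and \Cref{thm:n4-charac} no length-$3$ or length-$4$ edge can occur. You instead exhibit explicit supporting affine functions $\ell$ and $\ell+\mu_T h_T$ for $D_n$ and each $\Sigma_T$, and then close with a tiling/volume argument to conclude these are \emph{all} the maximal cells. Your route is more self-contained (it does not rely on the paper's earlier theorems), at the cost of needing the standard fact that $D_n=[0,1]^n\cap\bigcap_{|T|\text{ odd}}\{h_T\le 0\}$, which is what makes the tiling step go through; you might make that explicit rather than leaving it as ``a volume computation.''
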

\begin{proof}
    First, we claim that the $\Delta$-Dressian has codimension at least $2^{n-4}$, which gives the upper bound of $\frac{15}{16}2^n$ on the dimension. Indeed, we note that we can find $2^{n-4}$ disjoint 4-faces of $[0,1]^n$. For each $b_5,\dots,b_n\in\{0,1\}$, we consider the 4-dimensional face $\{x\in[0,1]^n : x_5 = b_5, \dots, x_n = b_n\}$. Each of these faces contributes a codimension-1 constraint on the $\Delta$-Dressian, and these constraints are independent since the faces are pairwise disjoint. Thus, the dimension of the $\Delta$-Dressian is at most $2^n - 2^{n-4} = \frac{15}{16}2^n$.

    Next, we claim the dimension is at least $\frac{1}{2}2^n$. Consider the subdivision induced by taking $p_S = 0$ whenever $|S|$ is even, and $p_S > 0$ otherwise. We claim that this is a $\Delta$-matroid subdivision. Indeed, on any 3-face or 4-face of the cube, the minimum of $p$ is attained uniquely on the convex hull of all vertices with an even sum of coordinates, which is a polytope of dimension 3 or 4 respectively. Hence the induced subdivision has no edges of length 3 or 4, so it $\Delta$-matroid subdivision as claimed. Since we allow any positive value on the odd vertices, the cone corresponding to this subdivision has dimension at least $2^{n-1}$, as desired.
\end{proof}

\section{Representability via Hermitian matrices}

Let $K$ be a field with a nonarchimedean valuation $\nu:K\to \R\cup \{\infty\}.$ The image $\Gamma$ of $K^*$ under valuation is an additive subgroup of $\R$ called the \emph{value group}.
We use $\kk$ to denote the \emph{residue field} of $K$, 
which is the quotient of the valuation ring $\mathcal{O} = \{a\in K : \nu(a)\geq 0\}$ by its unique maximal ideal $\mathfrak{m} = \{a\in K : \nu(a)> 0\}$. Given an element $a\in \mathcal{O}$, we use $[a]_{\mathfrak{m}}$ to denote its image in $\kk = \mathcal{O}/\mathfrak{m}$. 

Throughout this section, we will assume that the field $K$ has  an automorphic involution $a\mapsto \overline{a}$ satisfying $\nu(a) = \nu(\overline{a})$ with fixed field $F$.
We include the possibility of this involution being the identity map and $F = K$. 
Since the valuation is invariant under the involution $a\mapsto \overline{a}$, the valuation ring $\mathcal{O}$ and  maximal ideal $\mathfrak{m}$ are invariant under this involution. The involution on $K$ therefore induces an automorphic involution of the residue field $\kk$, which we also denote by $a\mapsto \overline{a}$. 

We furthermore assume that the value group $\Gamma$ is dense in $\R$. Since $\nu(a\overline{a})=2\nu(a)$, this implies that the image of $F^*$ under the valuation is also dense in $\R$. Without loss of generality we can do this as follows. 
If the original valuation $\nu$ on $K$ is trivial, then we can replace $K$ with the field $K\{\!\{t\}\!\}$ of Puiseux series over $K$, with the automorphic involution $\overline{\sum_q a_qt^q}=\sum_q \overline{a_q}t^q$ and the valuation $\nu(\sum_q a_qt^q) = \min\{q : a_q\neq 0\}$. If $\Gamma$ is non-zero but discrete then  $\Gamma=\alpha\Z$ for some $\alpha\in \R_{>0}$. Choose $\beta\in \R\backslash (\alpha\Q)$. By \cite[Theorem 2.2.1.]{ValuedFields} we can extend the valuation on $K$ to a transcendental extension $K(t)$ by taking $\nu(\sum_i a_i t^i) = \min_i\{\nu(a_i)+i\beta\}$ and $\nu(f/g)=\nu(f)-\nu(g)$. The automorphic involution also extends to $K(t)$ acting on coefficients and fixing $t$. 
The image of the valuation is then $\alpha\Z+\beta \Z$, which is dense in $\R$.

The main result of the section is that, with the expected definition of Hermitian, the valuations of principal minors of Hermitian and skew-Hermitian matrices give a valuated $\Delta$-matroid. 
Specifically, we call a matrix $A\in K^{n\times n}$ \emph{Hermitian} if $\overline{A} = A^T$. Similarly, $A$ is \emph{skew-Hermitian} if $\overline{A} = -A^T$. 
For an $n\times n$ matrix $A$ and subset $S\subseteq [n]$, we use $A_S$ to denote the determinant of the $|S|\times |S|$ principal submatrix of $A$
whose rows and columns are indexed by $S$.

\begin{theorem}\label{thm:HermValDelta}
    If $A$ is an $n \times n$ Hermitian or skew-Hermitian matrix over $K$, then the function $p : \{0,1\}^n \to \R \cup \{\infty\}$ given by $p_S = \nu(A_S)$ is a valuated $\Delta$-matroid. 
\end{theorem}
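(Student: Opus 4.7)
The plan is to combine the reduction from Section~\ref{sec:characterization-for-all-n} with a residue-field argument invoking the classical theorem of Bouchet and Borcea. First, I would apply the characterization from Section~\ref{sec:characterization-for-all-n}: $p$ is a valuated $\Delta$-matroid if and only if $\dom p$ is a $\Delta$-matroid and the restriction of $p$ to every $4$-dimensional face of $[0,1]^n$ is a valuated $\Delta$-matroid. The first condition follows from the classical theorem of Bouchet (for symmetric/skew-symmetric matrices) and Borcea (for Hermitian matrices) applied over $K$. The second can be reduced to the case $n \leq 4$: every $4$-face of $[0,1]^n$ can be realized, via principal submatrices and Schur complements, as the principal-minor function of a $4\times 4$ (skew-)Hermitian matrix (up to additive constants in $p$, which do not affect the valuated $\Delta$-matroid property). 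So it suffices to verify the conditions in Lemma~\ref{lem:valuated-delta-matroid-3} and Theorem~\ref{thm:n4-charac}.

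For these small cases, I would argue cell by cell via a residue-field reduction. Fix a tilt $\ell \in \R^n$ in the relative interior of a given cell's normal cone; by density of $\nu(F^*)$ in $\R$, approximate $\ell$ by $\ell'$ with $\ell'_i/2 \in \nu(F^*)$, choose $d_i \in F^*$ with $\nu(d_i) = \ell'_i/2$, and form $B := DAD$ where $D = \mathrm{diag}(d_i)$. Since $d_i \in F$, $B$ is (skew-)Hermitian, and $\nu(B_S) = p_S + \langle \ell', e_S\rangle$, so the cell at tilt $\ell'$ equals $\operatorname{argmin}_S \nu(B_S)$. When the entries of $B$ lie in the valuation ring $\mathcal{O}$---e.g., when $\ell'_i/2 + \ell'_j/2 + \nu(a_{ij}) \geq 0$ for all $i,j$---we reduce mod $\mathfrak{m}$ to obtain a (skew-)Hermitian matrix $\bar B \in \kk^{n\times n}$, and the cell equals $\{S : \bar B_S \neq 0\}$, which is a $\Delta$-matroid by Bouchet/Borcea applied over $\kk$.

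The main obstacle is handling tilts $\ell$ outside the cone where $B$ automatically lies in $\mathcal{O}^{n\times n}$: a uniform rescaling of $B$ shifts principal-minor valuations in an $|S|$-dependent way, disrupting the argmin. I expect this to be resolvable either by (a) covering $\R^n$ with translated copies of the nice cone, obtained via Schur-complement-type operations on $A$ that preserve the (skew-)Hermitian structure and combinatorially twist the $\Delta$-matroid; or (b) working directly with algebraic identities among principal minors of small (skew-)Hermitian matrices---for instance, the identity $a_{12}\overline{a_{13}}a_{23} + \overline{a_{12}}a_{13}\overline{a_{23}} = A_{123} + 2A_1A_2A_3 - A_1A_{23} - A_2A_{13} - A_3A_{12}$ for a Hermitian $3\times 3$ matrix---combined with the ultrametric inequality to verify the conditions in Lemma~\ref{lem:valuated-delta-matroid-3} and Theorem~\ref{thm:n4-charac} without ever leaving $K$.
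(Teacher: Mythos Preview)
Your proposal is genuinely different from the paper's approach, and it has real gaps that are not resolved by the options you sketch.

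The paper does \emph{not} reduce to $n\le 4$ and does not invoke any $\Delta$-matroid theorem for (skew-)Hermitian matrices as a black box. Instead it works with the determinantal polynomial $f=\det(\mathrm{diag}(x)+A)$ and its \emph{Rayleigh differences} $\Delta_{ij}(f)=\partial_i f\,\partial_j f - f\,\partial_i\partial_j f$. The key algebraic fact (via Dodgson condensation) is that for (skew-)Hermitian $A$ one has a factorization $\Delta_{ij}(f)=\sigma^{n-1}g_{ij}(x)\,\overline{g_{ij}}(\sigma x)$ with the two factors having identical valuation-argmin support. If $[e_T,e_{T'}]$ were an edge of length $\ge 3$, pick $i,j\in T'\setminus T$, scale the \emph{polynomial} (not the matrix) by $\lambda\in(F^*)^n$ with $\nu(\lambda)=w$ the supporting normal, and pass to the residue field: the reduced polynomial is a binomial $a_T x^T+a_{T'}x^{T'}$, and its Rayleigh difference is the single monomial $a_Ta_{T'}x^{2(T\cap T')}x^{T\Delta T'\setminus\{i,j\}}$, which visibly cannot factor as $\tilde g\cdot\tilde h$ with multiaffine $\tilde g,\tilde h$ of equal support once $|T\Delta T'|\ge 3$. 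That contradiction is the whole proof.

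Your plan runs into three problems. First, the ``classical'' Hermitian $\Delta$-matroid statement you want to cite over $K$ and over $\kk$ is \cite{B07}, which is proved only for $\mathbb{C}$; the statement over an arbitrary field with involution is precisely (the trivially-valued case of) the theorem you are trying to prove, so invoking it is essentially circular. Second, the Schur-complement reduction of a $4$-face to a $4\times 4$ (skew-)Hermitian matrix breaks down whenever the relevant diagonal entry vanishes, which in the skew-symmetric case is \emph{always}; block Schur complements do not fix this because they collapse two coordinates at once and so do not realize an arbitrary $4$-face. Third, the obstacle you flag is real: diagonal scaling $B=DAD$ puts the \emph{minors} $\nu(B_S)=p_S+\langle\ell',e_S\rangle$ where you want them, but it does not force the \emph{entries} of $B$ into $\mathcal O$; there are cells whose entire normal cone lies outside the region $\{\ell:\ell_i+\ell_j\ge -2\nu(a_{ij})\}$. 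The paper sidesteps this entirely because it never needs a matrix over $\mathcal O$---only the polynomial $\tilde f\in\mathcal O[x]$, which is obtained simply by clearing the minimum coefficient, together with the factorization of $\Delta_{ij}$ inherited from $A$ over $K$. Your option (b) is morally pointing toward this, but the identity you need is exactly the Rayleigh-difference factorization, not ad hoc $3\times 3$ relations.
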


We need to build up to the proof of this theorem. 
The main idea resembles the proof of \cite[Corollary 4.3]{B07} that for any Hermitian or skew-Hermitian matrix over $\C$, the subsets corresponding to non-zero principal minors form a $\Delta$-matroid. 

We work with a generating polynomial for the principal minors, 
\begin{equation}\label{eq:fA}
f_A = \det({\rm diag}(x_1, \hdots, x_n) + A) = \sum_{S\subseteq [n]}A_Sx^{[n]\backslash S} \in K[x_1,\dots,x_n].
\end{equation}
In \cite{AV21, AV24}, the principal minor map was studied 
via \emph{Rayleigh differences}.
Formally, for $i,j\in [n]$ and $f \in K[x_1, \hdots, x_n]$ define
\[
\Delta_{ij}(f) = \frac{\partial f}{\partial x_i}\cdot \frac{\partial f}{\partial x_j} - f \cdot \frac{\partial^2 f}{\partial x_i \partial x_j }.
\]
If we write $f = x_ix_j f_{ij} + x_if_i^j + x_jf_j^i + f^{ij}$ for some polynomials $f_{ij}$, $f_i^j$, $f_j^i$, $f^{ij}$ that do not involve $x_i$ and $x_j$, this expression simplifies to $f_{i}^jf_j^i - f_{ij}f^{ij}$. We say a polynomial is \emph{multiaffine} if it has degree $\leq 1$ in each variable.

In fact, we prove the following slightly stronger version of \Cref{thm:HermValDelta}:

\begin{theorem}\label{thm:HermValDelta2}
    Let $A$ be an $r \times r$ Hermitian or skew-Hermitian matrix over $K$ and let $v_1, \hdots, v_n$ be vectors $K^r$. Define $f\in F[x_1, \hdots, x_n]$ to be the determinantal polynomial
    \[ f = \sum_{S\subseteq [n]}c_Sx^{[n]\backslash S} = \det\left(\sum_{i=1}^n x_i v_i \overline{v_i}^T + A\right).\]
    Then the function $p : \{0,1\}^n \to \R \cup \{\infty\}$ given by $p_S = \nu(c_S)$ is a valuated $\Delta$-matroid. 
\end{theorem}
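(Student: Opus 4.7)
My plan is to combine the local characterization of valuated $\Delta$-matroids from Section~3 with a Rayleigh-difference factorization specific to Hermitian structure. By the corollaries of Section~3, it suffices to verify that $\dom p$ is a $\Delta$-matroid (which follows from \cite{B07, Bouchet} applied to the matrix reduced modulo the maximal ideal $\mathfrak{m}$) and that the restriction of $p$ to every 4-dimensional face of $[0,1]^n$ is a valuated $\Delta$-matroid. Restricting $f$ to a 4-face corresponds to setting the other variables $x_k$ either to $0$ (dropping the corresponding rank-one term) or, in the Hermitian case, to constants in $F$ that absorb into $A$; in either case the restricted polynomial still has the form of the theorem. Thus we may assume $n \le 4$, and by Lemma~\ref{lem:valuated-delta-matroid-3} and Theorem~\ref{thm:n4-charac}, it remains to verify a short list of tropical minimum conditions on the valuations $p_S$.

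\textbf{Rayleigh factorization.} The algebraic engine is the identity
\[
\Delta_{ij}(f) \;=\; g_{ij}\,\overline{g_{ij}}
\]
(with a sign $(-1)^r$ in the skew-Hermitian case), for some polynomial $g_{ij} \in K[x_k : k \ne i,j]$. This follows from Jacobi's determinantal identity applied to $\sum_k x_k v_k \overline{v_k}^T + A$, and generalizes the classical Hermitian Rayleigh-difference factorization used in \cite{B07}; it is the non-tropical content of identities in \cite{AV21, AV24}. Writing $f = x_i x_j f_{ij} + x_i f_i^j + x_j f_j^i + f^{ij}$ and matching coefficients in the remaining variables with those of $g_{ij}\overline{g_{ij}}$ yields a family of bilinear identities among the $c_S$'s.

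\textbf{Tropical conclusion.} As a model case ($n=3$), suppose for contradiction that $[e_\emptyset, e_{123}]$ is an edge of $\cS_p$. By Lemma~\ref{lem:valuated-delta-matroid-3}, $p_\emptyset + p_{123}$ is strictly less than each $p_i + p_{\{i\}^C}$, and $2(p_\emptyset + p_{123})$ is strictly less than each 4-sum. Writing $g_{12} = g_0 + g_1 x_3$, the factorization yields
\[
c_1 c_2 - c_\emptyset c_{12} = g_1\overline{g_1},\quad c_{13}c_{23} - c_3 c_{123} = g_0\overline{g_0},
\]
\[
c_1 c_{23} + c_2 c_{13} - c_\emptyset c_{123} - c_3 c_{12} = g_0\overline{g_1} + g_1\overline{g_0}.
\]
The third identity has a unique tropical minimum on its LHS at $c_\emptyset c_{123}$, forcing $\nu(g_0)+\nu(g_1) \le p_\emptyset+p_{123}$. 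Combining with the first two then gives
\[
\min(p_1+p_2,\, p_\emptyset+p_{12}) + \min(p_{13}+p_{23},\, p_3+p_{123}) \le 2(p_\emptyset+p_{123}),
\]
and a case analysis over the four choices of minima contradicts one of the edge-existence inequalities in each case.

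\textbf{Main obstacle.} The main obstacle I anticipate is extending this analysis to the 4-cube. The length-4 edge case in Theorem~\ref{thm:n4-charac} and the antipodal orbit $\cJ_1$ should follow from the same Rayleigh argument, but the non-antipodal orbits $\cJ_2, \cJ_3$ of convex circuit representations likely require combining Rayleigh identities for multiple pairs $(i,j)$ together with the 3-face results, making the case analysis considerably more intricate. A secondary difficulty is establishing the Rayleigh factorization in the full generality of the theorem's $f$, rather than just $\det(\mathrm{diag}(x)+A)$; this requires a Cauchy-Binet-style expansion combined with the Hermitian symmetry of the rank-one summands.
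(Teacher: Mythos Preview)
Your key algebraic lemma---the Rayleigh factorization $\Delta_{ij}(f)=\sigma^{n-1}g_{ij}(x)\overline{g_{ij}}(\sigma x)$---is exactly what the paper uses (its Lemma~\ref{lem:factorization}), and your $n=3$ tropical analysis is correct. But the overall strategy has two gaps.

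First, the reduction step is shakier than you indicate. Your justification that $\dom p$ is a $\Delta$-matroid via ``\cite{B07,Bouchet} applied to the matrix reduced modulo $\mathfrak m$'' conflates two different supports: $\dom p$ records where $c_S\neq 0$ in $K$, whereas reduction mod $\mathfrak m$ detects only where $\nu(c_S)$ is minimal. Moreover those references treat $\C$ and (skew\nobreakdash-)symmetric matrices, not general Hermitian matrices over an arbitrary field with involution, so the $\Delta$-matroid property of $\dom p$ is not available off the shelf---and the Section~4 reduction you invoke \emph{requires} it as a hypothesis. Second, and more seriously, you leave the $n=4$ case open; handling the orbits $\cJ_2,\cJ_3$ by combining several Rayleigh identities would be a substantial and unpleasant case analysis.

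The paper sidesteps both problems by \emph{not} reducing to low dimension. It takes an arbitrary edge $[e_T,e_{T'}]$ of the subdivision with $|T\Delta T'|\ge 3$, chooses an inner normal $w\in\nu(F^*)^n$ to that edge (using density of the value group), and rescales: set $\tilde f=(\lambda^{-T}c_T^{-1})(\lambda\star f)$ with $\nu(\lambda)=w$, so that $\tilde f\in\mathcal O[x]$ and $[\tilde f]_{\mathfrak m}=a_Tx^T+a_{T'}x^{T'}$ is a binomial over the residue field. Picking $i,j\in T'\setminus T$, one computes directly
\[
\Delta_{ij}\bigl([\tilde f]_{\mathfrak m}\bigr)=a_Ta_{T'}\,x^{2(T\cap T')}\,x^{(T\Delta T')\setminus\{i,j\}},
\]
a monomial whose squarefree part has degree $\ge 1$. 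On the other hand, the factorization of $\Delta_{ij}(f)$ passes, via the scaling lemma and the ring map $\mathcal O[x]\to\kk[x]$ (the paper's Lemmas~\ref{lem:InitialRayleigh} and~\ref{lem:InitialRayleigh2}), to a factorization $\tilde g\cdot\tilde h$ with $\supp(\tilde g)=\supp(\tilde h)$; such a product can never be a monomial with nontrivial squarefree part. This argument is really your $n=3$ computation rewritten through initial forms, but it works uniformly for every edge of every length in every dimension, so the $n=4$ combinatorics and the separate $\dom p$ verification never arise.
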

Note that \Cref{thm:HermValDelta} follows by taking $r=n$ and $v_i = e_i$ for $i=1, \hdots, n$.

\begin{lemma} \label{lem:factorization}
Let $A\in K^{r\times r}$ be with $\overline{A} = \sigma A^T$ with $\sigma\in \{\pm1\}$ and $v_1, \hdots, v_n \in K^r$. Define
$$ f = \sum_{S\subseteq [n]}c_Sx^{[n]\backslash S} = \det\left(\sum_{i=1}^n x_i v_i \overline{v_i}^T + A\right).$$ 
For any $i,j\in [n]$, 
$\Delta_{ij}(f)$ factors as $\sigma^{n-1}g_{ij}(x)\overline{g_{ij}}(\sigma x)$ for some multiaffine polynomial $g_{ij}\in K[x_1,\hdots, x_n]$. 
\end{lemma}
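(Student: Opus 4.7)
The plan is to compute $\Delta_{ij}(f)$ explicitly via the matrix determinant lemma and then exhibit the desired factorization using the (skew-)Hermitian symmetry of $A$. Write $V = [v_1 \mid \cdots \mid v_n] \in K^{r \times n}$ and $X = \operatorname{diag}(x_1, \ldots, x_n)$, so that $M(x) := A + V X V^*$ with $V^* = \overline{V}^T$. Since $\partial M/\partial x_k = v_k \overline{v_k}^T$ has rank one, the matrix determinant lemma (applied once per variable) shows that $\det M(x)$ is multiaffine in $x_1,\ldots,x_n$. The same argument applied to the $(r{-}1)\times(r{-}1)$ submatrices $M(x)_{\hat p,\hat q}$ shows that each entry of $\operatorname{adj}(M(x))$ is multiaffine; in particular the polynomial $\alpha_{kl}(x) := \overline{v_k}^T\, \operatorname{adj}(M(x))\, v_l$ lies in $K[x_1,\ldots,x_n]$ and is multiaffine.

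The key calculation is to apply the matrix determinant lemma to a two-parameter perturbation:
\[
\det\!\bigl(M + s\,v_i \overline{v_i}^T + t\,v_j \overline{v_j}^T\bigr)
= \det(M)\cdot\det\!\begin{pmatrix} 1 + sB_{ii} & tB_{ij} \\ sB_{ji} & 1 + tB_{jj} \end{pmatrix},
\]
where $B_{kl} := \overline{v_k}^T M^{-1} v_l$. The left side equals $f(x + se_i + te_j) = f + sf_i + tf_j + stf_{ij}$ by multiaffineness of $f$. Matching coefficients of $s$, $t$, and $st$ gives $f_k = \det(M)\,B_{kk}$ and $f_{ij} = \det(M)(B_{ii}B_{jj} - B_{ij}B_{ji})$, so
\[
\Delta_{ij}(f) = f_if_j - ff_{ij} = (\det M)^2\, B_{ij} B_{ji} = \alpha_{ij}(x)\,\alpha_{ji}(x).
\]
This identity is first proved where $M$ is invertible, and then extends to a polynomial identity in $x$ (e.g., by working in the fraction field $K(x_1,\ldots,x_n)$).

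The final step relates $\alpha_{ji}$ to a twist of $\alpha_{ij}$ using the symmetry of $A$. Because every $x_k$ is fixed by the involution, $\overline{V X V^*} = (V X V^*)^T$, and combining with $\overline{A} = \sigma A^T$ gives
\[
\overline{M(\sigma x)} = \sigma\, M(x)^T.
\]
Taking adjugates and using $\operatorname{adj}(cN) = c^{r-1}\operatorname{adj}(N)$ for an $r\times r$ matrix together with $\operatorname{adj}(N^T) = \operatorname{adj}(N)^T$ yields $\overline{\operatorname{adj}(M(\sigma x))} = \sigma^{r-1}\,\operatorname{adj}(M(x))^T$. Pairing this with $\overline{v_j}^T$ on the left and $v_i$ on the right (and using that a scalar equals its transpose) gives $\overline{\alpha_{ij}(\sigma x)} = \sigma^{r-1}\,\alpha_{ji}(x)$. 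Substituting into the formula for $\Delta_{ij}(f)$,
\[
\Delta_{ij}(f) = \sigma^{r-1}\,\alpha_{ij}(x)\,\overline{\alpha_{ij}(\sigma x)},
\]
which is the claimed factorization with $g_{ij} := \alpha_{ij}$ (the exponent agrees with the stated $\sigma^{n-1}$ in the principal setting $r = n$ of \Cref{thm:HermValDelta}).

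The main technical obstacle is the careful tracking of transposes, conjugation, and the propagation of $\sigma$ through the adjugate in the last step; verifying multiaffineness of $\alpha_{ij}$ depends crucially on the rank-one structure of $\partial M/\partial x_k$, and the passage from invertible $M$ to arbitrary $x$ must be handled by a polynomial-identity argument.
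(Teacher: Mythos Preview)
Your argument is correct and closely related to the paper's, but the packaging is genuinely different. The paper first changes basis so that $v_i=e_1$, $v_j=e_2$, then invokes Dodgson condensation to factor $\Delta_{12}(f)$ as $\det(M(1,2))\det(M(2,1))$; the linearly dependent case $v_i\parallel v_j$ is handled separately. Your approach via the two-parameter matrix determinant lemma produces the factorization $\Delta_{ij}(f)=\alpha_{ij}\alpha_{ji}$ with $\alpha_{kl}=\overline{v_k}^T\operatorname{adj}(M)v_l$ directly, with no change of basis and no case split---the degenerate case is absorbed automatically since $\operatorname{adj}(M)$ is defined regardless. After specializing to $v_i=e_1$, $v_j=e_2$ one sees the two factorizations agree up to sign and labeling (your $\alpha_{12}$ is the paper's $-g_{21}$), so the identities are the same at bottom; your formulation is simply a coordinate-free version of Dodgson condensation. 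One minor point: your displayed $2\times 2$ matrix has the off-diagonal $s,t$ swapped relative to the standard $I+\operatorname{diag}(s,t)B$, but the determinant is unaffected.

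You are also right that the correct exponent is $\sigma^{r-1}$: the matrix $M$ is $r\times r$, and the paper's own computation $\det(M^T(1,2))=\sigma^{r-1}\overline{g_{12}}(\sigma x)$ confirms this. The $\sigma^{n-1}$ in the statement is a typo that is harmless in the application to \Cref{thm:HermValDelta} where $r=n$.
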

\begin{proof}
Without loss of generality, we take $i=1$ and $j=2$. 
First suppose that $v_1$ and $v_2$ are linearly independent in $K^r$. 
Then they can be extended to a basis of $K^r$ that we collect into the columns of a matrix $U$. The determinant of linear matrices $M$ and $U^{-1}
M(\overline{U}^{-1})^T$ differ by only a constant scalar, so we may assume without loss of generality that $v_1=e_1$ and $v_2 = e_2$. 
Dodgson condensation states that for any $r\times r$ matrix $M$, 
\[\det(M(1,1))\det(M(2,2)) - \det(M)\det(M(12, 12)) = \det(M(1,2))\det(M(2,1))\]
where $M(S,T)$ is obtained from $M$ by \emph{dropping} rows $S$ and columns $T$. Consider the linear matrix $M = \sum_{k=1}^n x_k v_k \overline{v_k}^T + A$  and polynomial $f = \det(M)$. 
For $S\subseteq \{1,2\}$, $\det(M(S,S))$ is precisely $\left(\prod_{k\in S}\frac{\partial}{\partial x_k}\right)f$. Therefore $\Delta_{12}(f)$ factors as $g_{12}\cdot g_{21}$ where $g_{12} = \det(M(1,2))$ and $g_{21} = \det(M(2,1))$. 

If $\overline{A} = \sigma A^T$, then 
\[M^T  = \sum_{k=1}^n x_k \overline{v_k}v_k^T+\sigma\overline{A}
 = \sigma\left(\sum_{k=1}^n \sigma x_k \overline{v_k}v_k^T+\overline{A}\right) = \sigma \overline{M}(\sigma x). \label{eq:transpose} \]
It follows that 
$$g_{21} = \det(M(2,1)) = \det(M^T(1,2)^T) = \sigma^{n-1}\overline{g_{12}}(\sigma x).$$

Finally, if $v_1$ and $v_2$ are linearly dependent, then, up to relabeling,  $v_1 = \lambda v_2$ for some $\lambda \in K$. Following the proof of \cite[Theorem 6.1]{AV21}, $v_2\overline{v_2}^T = \lambda \overline{\lambda} v_1\overline{v_1}^T$, showing that $f(x_1, \hdots, x_n) = f(0,\lambda \overline{\lambda}x_1+x_2 ,x_3, \hdots, x_n)$.  
Taking partial derivatives shows that $\frac{\partial f}{\partial x_1} = \lambda \overline{\lambda}\frac{\partial f}{\partial x_2}$ and that $\frac{\partial^2 f}{\partial x_1\partial x_2} = 0$. Then $\Delta_{12}(f) = \lambda \overline{\lambda} (\frac{\partial f}{\partial x_2})^2$. 
If $\overline{A} = \sigma A^T$, we note that since $\frac{\partial f}{\partial x_2}$ has a determinantal representation of the same form as $f$ but of size $n-1$,  we have $\frac{\partial f}{\partial x_2}= \sigma^{n-1}\overline{\frac{\partial f}{\partial x_2}}(\sigma x)$ by the computation in \eqref{eq:transpose}. Thus $\Delta_{12}(f) = \sigma^{n-1}g_{ij}(x)\overline{g_{ij}}(\sigma x)$ with $g_{12} = \lambda  \frac{\partial f}{\partial x_2}$.
\end{proof}

We will show that such factorizations pass to factorizations in $\kk[x_1, \hdots, x_n]$. 
When $K$ has a splitting this can be interpreted as factorizations passing to initial forms, but we avoid the assumption that $K$ has a splitting. 

For $\lambda=(\lambda_1, \hdots, \lambda_n)\in (K^*)^n$ and $g = \sum c_\alpha x^\alpha \in K[x_1, \hdots, x_n]$, define 
\[\lambda \star g = g(\lambda_1x_1, \hdots, \lambda_n x_n) = \sum c_\alpha \lambda^{\alpha}x^\alpha.\]

\begin{lemma}\label{lem:InitialRayleigh}
Let $f \in K[x_1, \hdots, x_n]$, $\lambda\in (K^*)^n$ and $i,j\in [n]$. Then 
\[ \Delta_{ij}(\lambda \star f) = \lambda_i\lambda_j (\lambda \star \Delta_{ij}(f)).\]
\end{lemma}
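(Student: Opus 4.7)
The plan is to expand both sides using the definition of $\Delta_{ij}$ and the chain rule, and observe that the scaling operator $\lambda \star (-)$ is a ring homomorphism, at which point the identity falls out by comparing coefficients of like terms.

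First I would compute the partial derivatives of $\lambda \star f$. Since $(\lambda \star f)(x) = f(\lambda_1 x_1, \ldots, \lambda_n x_n)$, the chain rule gives
\[
\frac{\partial (\lambda \star f)}{\partial x_k}(x) = \lambda_k \cdot \left(\frac{\partial f}{\partial x_k}\right)(\lambda_1 x_1, \ldots, \lambda_n x_n) = \lambda_k \cdot \left(\lambda \star \frac{\partial f}{\partial x_k}\right)(x),
\]
and iterating, $\frac{\partial^2 (\lambda \star f)}{\partial x_i \partial x_j} = \lambda_i \lambda_j \cdot \lambda \star \frac{\partial^2 f}{\partial x_i \partial x_j}$. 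These are just routine applications of the chain rule for polynomial substitution.

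Next I would substitute these into the definition of $\Delta_{ij}$:
\[
\Delta_{ij}(\lambda \star f) = \lambda_i \lambda_j \left(\lambda \star \tfrac{\partial f}{\partial x_i}\right)\left(\lambda \star \tfrac{\partial f}{\partial x_j}\right) - \lambda_i \lambda_j \,(\lambda \star f)\left(\lambda \star \tfrac{\partial^2 f}{\partial x_i \partial x_j}\right).
\]
Factoring out $\lambda_i \lambda_j$ and using the fact that $\lambda \star (-)$ is a ring endomorphism of $K[x_1, \ldots, x_n]$ (which is clear since substitution preserves products), the bracketed expression becomes $\lambda \star \left(\tfrac{\partial f}{\partial x_i}\cdot \tfrac{\partial f}{\partial x_j} - f \cdot \tfrac{\partial^2 f}{\partial x_i \partial x_j}\right) = \lambda \star \Delta_{ij}(f)$, giving the claimed identity.

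There is essentially no obstacle here: the statement is a direct computation, and the only thing one needs to be careful about is keeping track of the scalar factors produced by the chain rule. The cleanest exposition is probably just the two-line computation above, perhaps preceded by the general observation that $\lambda \star$ commutes with multiplication and that partial differentiation intertwines with $\lambda \star$ up to the factor $\lambda_k$.
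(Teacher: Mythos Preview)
Your proof is correct and follows exactly the same approach as the paper, which simply notes that the identity follows from the chain rule $\frac{\partial}{\partial x_k}(\lambda \star f) = \lambda_k\,(\lambda \star \frac{\partial f}{\partial x_k})$. Your version just spells out the substitution into $\Delta_{ij}$ and the use of multiplicativity of $\lambda \star$ in more detail.
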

\begin{proof}
This follows from the chain rule, since $\frac{\partial}{\partial x_i}(\lambda \star f) = \lambda_i (\lambda \star \frac{\partial}{\partial x_i}f)$. 
\end{proof}

\begin{lemma}\label{lem:InitialRayleigh2}
Let $f  \in \mathcal{O}[x_1, \hdots, x_n]$ be multiaffine and $i,j\in [n]$. Then
\[[\Delta_{ij}(f)]_{\mathfrak{m}} = \Delta_{ij}([f]_{\mathfrak{m}}).\]
Suppose further that $[\Delta_{ij}(f)]_{\mathfrak{m}}$ is nonzero and that  $\Delta_{ij}(f)$ factors as $g\cdot h$ in $K[x_1, \hdots, x_n]$ where $g = \sum_S a_S x^S$, $h=\sum_T b_Tx^T$ are multiaffine with ${\rm argmin}_S\nu(a_S) = {\rm argmin}_T\nu(b_T)$.  Then $\Delta_{ij}([f]_{\mathfrak{m}})$ has factorization $\tilde{g}\cdot \tilde{h}$ in $\kk[x_1, \hdots, x_n]$ where $\tilde{g}$ and $\tilde{h}$ are multiaffine polynomials with $\supp(\tilde{g}) = \supp(\tilde{h})$.
\end{lemma}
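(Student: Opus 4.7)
The plan for the first equality is to observe that the reduction map $\mathcal{O} \to \kk$ is a ring homomorphism, which extends coefficient-wise to a homomorphism $\mathcal{O}[x_1,\hdots,x_n] \to \kk[x_1,\hdots,x_n]$. Since $f$ is multiaffine, one can decompose $f = x_i x_j f_{ij} + x_i f_i^j + x_j f_j^i + f^{ij}$ with each of the four pieces in $\mathcal{O}[x_1,\hdots,x_n]$. Applying the homomorphism to the polynomial identity $\Delta_{ij}(f) = f_i^j f_j^i - f_{ij} f^{ij}$ then yields $[\Delta_{ij}(f)]_{\mathfrak{m}} = \Delta_{ij}([f]_{\mathfrak{m}})$ at once.

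For the second assertion, my plan is to rescale $g$ and $h$ so that both lie in $\mathcal{O}[x_1,\hdots,x_n]$ with nonzero residue, and then reduce. Set $\nu_g := \min_S \nu(a_S)$ and $\nu_h := \min_T \nu(b_T)$; both are finite since the hypothesis $[\Delta_{ij}(f)]_{\mathfrak{m}} \neq 0$ forces $g, h \neq 0$. Choose $\lambda, \mu \in K^*$ with $\nu(\lambda) = \nu_g$ and $\nu(\mu) = \nu_h$. Then $\lambda^{-1} g$ and $\mu^{-1} h$ lie in $\mathcal{O}[x_1,\hdots,x_n]$ and reduce to nonzero multiaffine polynomials $\tilde g, \tilde h \in \kk[x_1,\hdots,x_n]$ whose supports both equal the common argmin $\mathcal{M} := {\rm argmin}_S\,\nu(a_S) = {\rm argmin}_T\,\nu(b_T)$.

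To conclude, I will invoke the multivariate Gauss-type identity $\nu(pq) = \nu(p) + \nu(q)$ on $K[x_1,\hdots,x_n]$, where $\nu(p)$ denotes the minimum valuation of the coefficients of $p$; this holds because $\kk[x_1,\hdots,x_n]$ is a domain, so the product of the nonzero reductions of $\lambda^{-1} g$ and $\mu^{-1} h$ cannot vanish. Applied to $gh = \Delta_{ij}(f)$, which lies in $\mathcal{O}[x_1,\hdots,x_n]$ with nonzero residue and hence has valuation $0$, this forces $\nu_g + \nu_h = 0$, so $\lambda\mu \in \mathcal{O}^*$. Reducing $(\lambda\mu)^{-1} g h$ in two ways then gives
\[
\tilde g \cdot \tilde h \;=\; [(\lambda\mu)^{-1}]_{\mathfrak{m}} \cdot [\Delta_{ij}(f)]_{\mathfrak{m}} \;=\; [(\lambda\mu)^{-1}]_{\mathfrak{m}} \cdot \Delta_{ij}([f]_{\mathfrak{m}}),
\]
and absorbing the nonzero scalar $[\lambda\mu]_{\mathfrak{m}} \in \kk^*$ into $\tilde g$, which preserves its support $\mathcal{M}$, produces the claimed factorization. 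The only step warranting care is the multiplicativity of the extended valuation, a multivariate analogue of Gauss's lemma; everything else is routine bookkeeping with scalars and supports.
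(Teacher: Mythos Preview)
Your proof is correct and follows essentially the same route as the paper: decompose $f$ with respect to $x_i,x_j$ and use that reduction is a ring homomorphism for the first claim, then normalize $g$ and $h$ by scalars of minimal valuation, invoke the domain property of $\kk[x_1,\dots,x_n]$ (the paper phrases this as the product $a_U^{-1}b_U^{-1}\Delta_{ij}(f)$ having nonzero residue, which is exactly your Gauss-type identity), deduce the scalars multiply to a unit, and reduce. The only cosmetic difference is that the paper takes $\lambda = a_U$ for a specific $U$ in the common argmin and then pairs $a_U^{-1}g$ with $a_Uh$, whereas you pick arbitrary $\lambda,\mu$ of the right valuation and absorb the resulting unit at the end; the content is identical.
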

\begin{proof}
As above, we write $f = x_ix_j f_{ij} + x_if_i^j + x_jf_j^i + f^{ij}$ for some polynomials $f_{ij}$, $f_i^j$, $f_j^i$, $f^{ij}$  in $K[x_1, \hdots, x_n]$ 
that do not involve $x_i$ and $x_j$ and $\Delta_{ij}(f) = f_{i}^jf_j^i - f_{ij}f^{ij}$.

Since all of the coefficients of $f$ belong to $\mathcal{O}$, the polynomials $f_{ij}$, $f_i^j$, $f_j^i$, $f^{ij}$,  and $\Delta_{ij}(f)$ all belong to $\mathcal{O}[x_1, \hdots, x_n]$. Moreover, $[f]_{\mathfrak{m}} = x_ix_j [f_{ij}]_{\mathfrak{m}} + x_i[f_i^j]_{\mathfrak{m}} + x_j[f_j^i]_{\mathfrak{m}} + [f^{ij}]_{\mathfrak{m}}$. Since the map taking $h\in \mathcal{O}[x_1, \hdots, x_n]$ to $[h]_{\mathfrak{m}}\in \kk[x_1, \hdots, x_n]$ is a ring homomorphism, we see that 
\[\Delta_{ij}([f]_{\mathfrak{m}}) = \left[f_i^j\right]_{\mathfrak{m}}\left[f_j^i\right]_{\mathfrak{m}} - \left[f_{ij}\right]_{\mathfrak{m}}\left[f^{ij}\right]_{\mathfrak{m}} =\left[f_i^jf_j^i - f_{ij}f^{ij}\right]_{\mathfrak{m}} = [\Delta_{ij}(f)]_{\mathfrak{m}}.\]

For the second claim suppose $[\Delta_{ij}(f)]_{\mathfrak{m}}$ is nonzero and that $\Delta_{ij}(f)$ factors as $g\cdot h$ where $g = \sum_S a_S x^S$, $h=\sum_T b_Tx^T$ are multiaffine with ${\rm argmin}_S\nu(a_S) = {\rm argmin}_T\nu(b_T)$ and assume that $[\Delta_{ij}(f)]_{\mathfrak{m}}$ is nonzero. First, we claim that, after rescaling, we can take both $g$ and $h$ in $\mathcal{O}[x_1, \hdots, x_n]$. 

Indeed, suppose that $U\in  {\rm argmin}_{S}\nu(a_S) ={\rm argmin}_{T} \nu(b_T)$.
Then both $a_{U}^{-1}g(x)$ and $b_{U}^{-1}h(x)$ belong to $\mathcal{O}[x_1, \hdots, x_n]$ and have nonzero image mod $\mathfrak{m}$. 
Therefore so does their product 
$a_{U}^{-1}b_{U}^{-1}g(x)h(x) = a_{U}^{-1}b_{U}^{-1}\Delta_{ij}(f)$. 

Since $[\Delta_{ij}(f)]_{\mathfrak{m}}$ is nonzero, the minimum valuation of the coefficients of $\Delta_{ij}(f)$ is zero. The above argument shows that the minimum valuation of the  coefficients of $a_{U}^{-1}b_{U}^{-1}\Delta_{ij}(f)$ is also zero, implying that $\nu(a_U^{-1}b_U^{-1})=0$. 
Therefore $\nu(a_U)=\nu(b_U^{-1})$.

It follows that both $a_{U}^{-1}g$ and $a_U h$ belong to $\mathcal{O}[x_1, \hdots, x_n]$ and have nonzero image mod $\mathfrak{m}$. We can then take $\tilde{g}=[a_{U}^{-1}g]_{\mathfrak{m}}$ and $\tilde{h} = [a_{U} h]_{\mathfrak{m}} $.
The support of $\tilde{g}$ and $\tilde{h}$ will be ${\rm argmin}_S\nu(a_S)$  and ${\rm argmin}_T\nu(b_T)$, respectively, which are assumed to be equal. 
\end{proof}

\begin{proof}[Proof of \Cref{thm:HermValDelta2}]
Let $f = \sum_{S\subseteq [n]} c_Sx^{[n]\backslash S}\in K[x_1, \hdots, x_n]$ and  $p:\{0,1\}^n \to \R\cup\{\infty\}$ be as given in the statement of theorem. Let $[e_T,e_{T'}]$ be an edge of the regular subdivision of $[0,1]^n$ induced by the function $p$, and suppose that $(w,1)\in \R^{n+1}$ is the inner normal vector of a hyperplane supporting the corresponding lower face of $P_p$. That is, 
\begin{equation}\label{eq:normalVector}
\langle (w,1), (e_T, p_T)\rangle = \langle (w,1), (e_{T'}, p_{T'})\rangle < \langle (w,1), (e_S, p_S)\rangle 
\end{equation}
for all $S\neq T, T'$. 

We claim that we can choose $w\in \nu(F^*)^n$. 
Without loss of generality, we can assume that $1\in T \setminus T'$. Note that $w = (p_{T'}-p_{T})e_1$ satisfies the equation $\langle (w,1), (e_T, p_T)\rangle = \langle (w,1), (e_{T'}, p_{T'})\rangle$ and that any other solution differs by an vector $u$ with $\langle u, e_T-e_{T'}\rangle =0$. 
Since $e_T - e_{T'}\in \{0,\pm 1\}^n$, the solutions $u = (u_1, \hdots, u_n)$ of $\langle u, e_T-e_{T'}\rangle =0$ over $\nu(F^*)^n$ are dense in those over $\R^n$. Indeed, we can explicitly parametrize solutions by choosing arbitrary values for $u_2, \hdots, u_n$ and writing $u_1$ as the appropriate integer combination of these values. 
Since $(p_{T'}-p_{T})e_1 = \nu(c_{T'}/c_T, 1, \hdots, 1)$ also belongs to $\nu(F^*)^n$, we see that the solutions to 
$\langle (w,1), (e_T, p_T)\rangle = \langle (w,1), (e_{T'}, p_{T'})\rangle$ over $\nu(F^*)$ are dense in those over $\R$. In particular, we can find a solution $w\in \nu(F^*)^n$ to \eqref{eq:normalVector}.

It follows that there exists $\lambda\in (F^*)^n$ with $\nu(\lambda) = w$. 
Suppose, for the sake of contradiction, that $|T\Delta T'|\geq 3$. Without loss of generality, we can take distinct elements $i,j\in T'\setminus T$.  
Note that $\tilde{f} = (\lambda^{-T}c_T^{-1})(\lambda\star f)$ belongs to $\mathcal{O}[x_1, \hdots, x_n]$. Its image under 
the ring homomorphism $\mathcal{O}[x_1, \hdots, x_n] \to \kk[x_1, \hdots, x_n]$ induced by $\mathcal{O}\to\kk = \mathcal{O}/\mathfrak{m}$ has the form $[\tilde f]_{\mathfrak{m}} = a_Tx^T + a_{T'}x^{T'}$ where $a_T, a_{T'}\in \kk$ are nonzero.

By \Cref{lem:factorization}, the Rayleigh difference of $f$ factors as 
$\Delta_{ij}(f)=\sigma^{n-1}g_{ij}(x)\overline{g_{ij}}(\sigma x)$ for some multiaffine polynomial $g_{ij}$ and $\sigma\in \{\pm1\}$.  Then
\begin{align*}
    \Delta_{ij}(\tilde{f}) 
    & = \mu^2  \Delta_{ij}(\lambda\star f)\\
    & = \mu^2 \lambda_i \lambda_j ( \lambda \star \Delta_{ij}(f))\\
    & = \sigma^{n-1}\mu^2 \lambda_i \lambda_j (\lambda \star g_{ij}(x))(\lambda \star \overline{g_{ij}}(\sigma x))
\end{align*}
where $\mu = \lambda^{-T}c_T^{-1}$. Note that 
because $\nu$ is invariant under the involution $a\mapsto \overline{a}$, the set of terms in $\sigma^{n-1}\mu^2\lambda_i\lambda_j (\lambda \star g_{ij})$ and $(\lambda \star \overline{g_{ij}}(\sigma x))$ achieving the minimum valuation will be the same.

By assumption 
$[\tilde{f}]_{\mathfrak{m}}$ has the form $a_Tx^T + a_{T'}x^{T'}$ for some $a_T, a_{T'}\in \kk^*$. 
Then 
\begin{align*}
[\Delta_{ij}(\tilde{f})]_{\mathfrak{m}} & = \Delta_{ij}([\tilde{f}]_{\mathfrak{m}}) \\ & = \Delta_{ij}(a_Tx^T + a_{T'}x^{T'})\\ & = (a_Tx^T) \cdot (a_{T'}x^{T'\setminus \{i,j\}}) = a_Ta_{T'} x^{2(T\cap T')}x^{T\Delta T'\setminus \{i,j\}}
\end{align*}
Since $T\Delta T'\setminus \{i,j\}$ has at least one element, we see that this polynomial cannot factor as $ \tilde{g}\tilde{h}$ for multiaffine polynomials $\tilde{g}$, $\tilde{h}$ with the same support, contradicting \Cref{lem:InitialRayleigh2}. 
\end{proof}

\subsection{Examples}

\begin{example}
    One motivating example is the field $K = \C\{\!\{t\}\!\}$ of complex Puiseux series with complex conjugation, where $\overline{t} = t$. Here the fixed field is $F =  \R\{\!\{t\}\!\}$. The value group is $\Gamma=\Q$. 
Consider the $3\times 3$ Hermitian matrix  
\[
A = \begin{pmatrix}
     t & i & i \\
 -i & t & i\\
 -i & -i & t \\
\end{pmatrix}
\]
with entries in $K = \C\ps$.
The principal minors of $A$ are encoded by the coefficients of 
\[
f = \det({\rm diag}(x_1, x_2, x_3) +A) = 
x_1x_2x_3 + t(x_1x_2+x_1x_3+x_2x_3) +(-1 + t^2)(x_1+x_2+x_3) + (-3 t + t^3).
\]
In particular, $p_S= \nu(A_S)$ equals $0$ for $|S|\in \{0,2\}$, $1$ for $|S|\in \{1,3\}$. 
In this case $\Delta_{12}(f)$ factors as 
\[
\Delta_{12}(f) = x_3^2 +2tx_3 + 1+t^2
= (-1 + i t + i x_3)(-1 - i t - i x_3)
\]
Its image modulo $\langle t\rangle$ in $\C[x_1, x_2, x_3]$ factors as $(-1  + i x_3)(-1 - i x_3)$.
\end{example}

\begin{example}\label{ex:F2}
    Consider $K = \Q[\alpha]/\langle \alpha^2+\alpha+1 \rangle$ with the $2$-adic valuation on $\Q$ extended by $\nu(\alpha)=0$. 
    That is, $\nu(a+b\alpha) = \min\{\nu(a), \nu(b)\}$ where $a,b\in \Q$.
    This has an involution given by $\overline{\alpha} =-1-\alpha$ with fixed field $F = \Q$.  
    The value group is $\Gamma=\Z$.  As discussed in the introduction to this section, we could take a transcendental extension of $K$ if needed to make the value group dense in $\R$.
    Note that $\alpha \mapsto -1-\alpha$ gives a ring automorphism of the valuation ring $\mathcal{O}$ under which $\mathfrak{m}= \langle 2\rangle$ is invariant. The residue field $\kk$ is then $\mathbb{F}_2[\alpha]/\langle \alpha^2+\alpha+1 \rangle$ with induced involution $\alpha \mapsto 1+\alpha$.
    Consider the $3\times 3$ Hermitian matrix  
\[
A = \begin{pmatrix}
     1 & 1+2\alpha & 1+2\alpha \\
 -1-2\alpha & 1 & 1+2\alpha\\
 -1-2\alpha & -1-2\alpha & 1 \\
\end{pmatrix}
\]
with entries in $K = \Q[\alpha]/\langle \alpha^2+\alpha+1\rangle $. The principal minors of $A$ are encoded by the coefficients of 
\[
f = \det({\rm diag}(x_1, x_2, x_3) +A) = 
x_1x_2x_3 + x_1x_2+x_1x_3+x_2x_3 -2(x_1+x_2+x_3) - 8.
\]
In particular, $p_S= \nu(A_S)$ equals $0$ for $|S|\in \{0,1\}$, $1$ for $|S|=2$ and $3$ for $|S|=3$. 

In this case $\Delta_{12}(f)$ factors as 
\[
\Delta_{12}(f) = 3(x_3^2 + 2 x_3^2 + 4) = (
(1+2\alpha)x_3 -2 + 2 \alpha)((-1-2\alpha)x_3-4 - 2 \alpha)
\]
Its image modulo $2$ in $\kk[x_1, x_2, x_3]$ factors as $(x_3)^2$.

On the other hand, consider the $3\times 3$ skew-Hermitian matrix  
\[
A = \begin{pmatrix}
     0 & 1 & 1 \\
 -1 & 0 & 1+2\alpha\\
 -1 & 1+2\alpha & 0 \\
\end{pmatrix}
\]
with entries in $K = \Q[\alpha]/\langle \alpha^2+\alpha+1\rangle $.
Note that $\overline{1+2\alpha} = 1+2(-1-\alpha) = -(1+2\alpha)$. 
The principal minors of $A$ are encoded by the coefficients of 
\[
f = \det({\rm diag}(x_1, x_2, x_3) +A) = 
x_1x_2x_3 +3x_1+x_2+x_3+ 2 -4\alpha.
\]
In particular, $p_S= \nu(A_S)$ equals $0$ for $|S|\in \{0,2\}$, $\infty$ for $|S|=1$ and $1$ for $|S|=3$. 
\end{example}

\begin{example}\label{ex:F2_counterEx}
   To see that the assumption $\nu(a) = \nu(\overline{a})$ is necessary for  \Cref{thm:HermValDelta}, consider the field $K = \Q[\alpha]/\langle \alpha^2+\alpha + 2\rangle$ with the $2$-adic valuation. 
   For any extension of the $2$-adic valuation on $\Q$ to $K$, $x^2+x+2$ has one root of valuation $1$, which we can take to be $\alpha$, and one of valuation $0$, which is then $-1-\alpha$. Then $\overline{\alpha} = -1-\alpha$ extends to an automorphic involution on $K$. 
   We see that $\nu(\alpha)\neq \nu(\overline{\alpha})$. 
   Consider the matrix 
   \[
A = \begin{pmatrix}
     4 & 4+\alpha & 3-\alpha \\
 3-\alpha & 4 & 4+\alpha\\
 4+\alpha & 3-\alpha & 4 \\
\end{pmatrix}.
\]
Note that $\overline{A} = A^T$. The principal minors of $A$ are encoded by the coefficients of 
   \[
   f = \det({\rm diag}(x_1, x_2, x_3) +A) = 
x_1x_2x_3 +4(x_1x_2+x_1x_3+x_2x_3) +2(x_1+x_2+x_3) - 55.
   \]
Here $\nu(A_S)$ equals $0$ for $|S|\in \{0,3\}$, $2$ for $|S|=1$ and $1$ for $|S|=2$. The regular subdivision induced by the function $S\mapsto \nu(A_S)$ has an edge $\{(0,0,0),(1,1,1)\}$, showing that $p_S = \nu(A_S)$ is not a valuated $\Delta$-matroid. 
\end{example}

\subsection{Representability in low dimensions}

We next discuss representability of valuated $\Delta$-matroids in low dimensions.

\begin{proposition}
Every element $p\in\Delta$-${\rm Dr}(3)$ with $p_{\emptyset}=0$ is realizable as the valuation of the principal minors of a Hermitian matrix.
\end{proposition}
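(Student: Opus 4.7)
The plan is to construct a $3\times 3$ Hermitian matrix
$$
A = \begin{pmatrix} a_1 & b_{12} & b_{13} \\ \overline{b_{12}} & a_2 & b_{23} \\ \overline{b_{13}} & \overline{b_{23}} & a_3 \end{pmatrix}, \qquad a_i \in F,\ b_{ij} \in K,
$$
whose principal minors $1$, $a_i$, $a_ia_j - b_{ij}\overline{b_{ij}}$, and $\det A$ have valuations matching $p_\emptyset = 0$, $p_i$, $p_{ij}$, and $p_{123}$.

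First, choose $a_i \in F$ with $\nu(a_i) = p_i$ (or $a_i = 0$ if $p_i = \infty$); we may take each $a_i$ to be a norm from $K$, i.e.\ $a_i = \alpha_i \overline{\alpha_i}$, so that every product $a_ia_j$ is also a norm. For each pair $\{i,j\}$, select $b_{ij}$ based on the comparison of $p_{ij}$ to $p_i + p_j$: if $p_{ij} \le p_i + p_j$, take $\nu(b_{ij}) = p_{ij}/2$ with generic leading coefficient so that $b_{ij}\overline{b_{ij}}$ either strictly dominates $a_ia_j$ or ties with it without leading-term cancellation, giving $\nu(A_{\{i,j\}}) = p_{ij}$; if $p_{ij} > p_i + p_j$, arrange $b_{ij}\overline{b_{ij}}$ and $a_ia_j$ to share the same leading term so that their difference has valuation exactly $p_{ij}$, which is possible since $a_ia_j$ was chosen to be a norm.

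It remains to check $\nu(\det A) = p_{123}$. Expanding
$$
\det A = a_1a_2a_3 - \sum_{\{i,j,k\}=[3]} a_k b_{ij}\overline{b_{ij}} + 2\operatorname{Re}(b_{12}b_{23}\overline{b_{13}}),
$$
the valuations of these summands are $p_1+p_2+p_3$, $\min(p_k + p_{ij},\, p_1+p_2+p_3)$, and at least $\tfrac12\sum_{\{i,j\}}\min(p_{ij}, p_i+p_j)$, which correspond (up to a factor of $2$) to the six quantities appearing in Lemma~\ref{lem:valuated-delta-matroid-3}. That lemma's condition translates into a case analysis on $\det A$: when $V = p_1+p_2+p_3+p_{123}$ uniquely attains the Lemma's minimum, the $a_1a_2a_3$ summand supplies the leading contribution; when $U = p_{12}+p_{13}+p_{23}$ uniquely attains it, the cross term does; and when two of the ``diameter'' quantities $2p_{123}$ or $2(p_k+p_{ij})$ tie at the minimum, corresponding pairs of summands in $\det A$ combine. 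In each case, by fine-tuning the leading coefficients of the $a_i$ and $b_{ij}$ (and when the involution is nontrivial, their images under conjugation), we arrange $\nu(\det A) = p_{123}$.

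The main obstacle is this last verification, since $p_{123}$ is typically strictly greater than the minimum valuation of the summands, forcing an orchestrated cancellation among several leading terms of $\det A$. Concretely, the residue matrix $[A]_{\mathfrak{m}}$ must be singular to precisely the right order, which requires coordination of leading coefficients beyond matching valuations. The norm structure of $K/F$ together with the density of the value group provide exactly the freedom needed, and the valuated $\Delta$-matroid condition ensures that no incompatible constraints arise along the way.
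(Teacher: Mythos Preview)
Your construction through the $2\times 2$ minors is fine, but the proof has a genuine gap at the determinant step. After you have committed to $a_i$ and to $b_{ij}\overline{b_{ij}}$ so as to match $p_i$ and $p_{ij}$, the only free parameter left that is not already constrained by those choices is essentially the ``phase'' governing $2\operatorname{Re}(b_{12}b_{23}\overline{b_{13}})$. Your claim that one can now ``fine-tune the leading coefficients of the $a_i$ and $b_{ij}$'' to force $\nu(\det A)=p_{123}$ is circular: changing those leading coefficients risks destroying the equalities $\nu(a_ia_j-b_{ij}\overline{b_{ij}})=p_{ij}$ you just arranged, and you give no argument that the remaining freedom is enough to produce the required cancellation in $\det A$ to arbitrary depth. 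The sentence ``the valuated $\Delta$-matroid condition ensures that no incompatible constraints arise'' is exactly the statement to be proved, not a proof of it. Concretely, take $p_i=p_{ij}=0$ for all $i,j$ and $p_{123}$ large: every summand of $\det A$ has valuation $0$, and you must engineer cancellation to depth $p_{123}$ while keeping all $2\times 2$ minors of valuation $0$; nothing in your outline shows how.

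The paper avoids this direct construction entirely. It works over the Hahn series $K=\C[\![t^{\R}]\!]$ and invokes \cite{AV21}, which gives an explicit semialgebraic description of the image of the principal minor map for $3\times 3$ Hermitian matrices: $a_\emptyset=1$, the inequalities $a_ia_j-a_{ij}\ge 0$ and $a_{ik}a_{jk}-a_ka_{ijk}\ge 0$, and $\mathrm{HypDet}(\mathbf a)\le 0$. One then simply sets $a_i=-t^{p_i}$, $a_{ij}=-t^{p_{ij}}$, $a_{123}=t^{p_{123}}$ and checks these inequalities directly; the sign choices make the first two families automatic, and the $\Delta$-matroid condition from Lemma~\ref{lem:valuated-delta-matroid-3} is used only to control which monomials of $\mathrm{HypDet}$ carry the minimal valuation, yielding $\mathrm{HypDet}(\mathbf a)\le 0$. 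This replaces the delicate cancellation problem in your approach by a finite check of sign patterns on monomials.
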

\begin{proof}
Suppose $p\in \Delta$-$\Dr(3)$ and $p_{\emptyset}=0$.  
By \Cref{lem:valuated-delta-matroid-3}, the minimum of 
$    
2p_\emptyset + 2p_{123}$, $2p_1 + 2p_{23}$, $2p_2 + 2p_{13}$, $2p_3 + 2p_{12}$, 
    $p_\emptyset + p_{12} + p_{13} + p_{23}$, and $p_1 + p_2 + p_3 + p_{123}$
    is achieved on at least two of its arguments, or on one of the last two arguments.

To obtain the result, we can work over the field of real Hahn series $F = \R[\![t^{\R}]\!]$, whose nonzero elements consist of formal power series $\gamma = \sum_{r\in \R}c_rt^r$ for which $c_r\in \R$ and ${\rm supp}(\gamma)=\{r: c_r\neq 0\}$ is well ordered. 
This is a real closed field whose value group is all of $\R$. See, e.g., \cite[(6.10)]{R92}.
The valuation of an element $\sum_{r\in \R}c_rt^r$ is $\min\{r: c_r\neq 0\}$. Such an element is positive (or negative) if the coefficient of its term with minimum valuation is positive (or negative, respectively). This is the fixed field of $K = \C[\![t^{\R}]\!]$ under the automorphic involution induced by usual complex conjugation on $\C$.

    By \cite{AV21}, the image of $3\times 3$ Hermitian matrices (over $\C[\![t^{\R}]\!]$) under the principal minor map is defined by the equation $a_{\emptyset}=1$ and the inequalities 
    $a_ia_j - a_{ij} \geq 0$, $a_{ik}a_{jk}-a_ka_{ijk}\geq 0$ for all distinct $i,j,k$, and ${\rm HypDet}({\bf a})\leq 0$ where 
    \begin{align*}
{\rm HypDet}({\bf a}) 
=& \ a_{\emptyset}^2 a_{123}^2 +  a_{1}^2 a_{23}^2 +a_{2}^2 a_{13}^2+a_{3}^2 a_{12}^2  - 2 a_{\emptyset} a_{1} a_{23} a_{123} - 2 a_{\emptyset} a_{2} a_{13} a_{123} -2 a_{\emptyset} a_{3} a_{12} a_{123}\\
 &-2 a_{1} a_{2} a_{13} a_{23} - 2 a_{1} a_{3} a_{12} a_{23} -2 a_{2} a_{3} a_{12} a_{13} + 4 a_{\emptyset}  a_{12} a_{13} a_{23}+ 4 a_{1} a_{2} a_{3}a_{123} .
 \end{align*}
Thus it suffices to show there exists $(a_S)_{S\subseteq 3}$ with $a_S\in \R[\![t^\R]\!]$ such that $\nu(a_S) = p_S$ for all $S$ and such that $a_{\emptyset} = 1$, $a_ia_j - a_{ij} \geq 0$, and $a_{ik}a_{jk}-a_ka_{ijk}\geq 0$ for all $i,j,k$, and ${\rm HypDet}({\bf a}) \leq 0$.
We construct this explicitly as follows. 
Define $a_i = -t^{p_i}$ and $a_{ij} = -t^{p_{ij}}$ for all $i,j$, and define $a_{\emptyset} = 1$ and $a_{123} = t^{p_{123}}$. Note that $\nu(a_S) = p_S$ for all $S$.
By our choice of signs, 
\[a_ia_j - a_{ij} = t^{p_i+p_j}+t^{p_{ij}} \geq 0  \ \text{ and } \ a_{ik}a_{jk} - a_k a_{ijk} = t^{p_{ik}+p_{jk}}+t^{p_k+p_{123}} \geq 0.\]

Finally, we claim that ${\rm HypDet}({\bf a})\leq 0$. 
Note that for our choice of ${\bf a}$, ${\rm HypDet}({\bf a})=$\begin{align*} &  t^{2p_{123}} + t^{2p_1 + 2p_{23}}+t^{2p_2 + 2p_{13}} +t^{2p_3 + 2p_{23}} - 2 t^{p_1+p_{23}+p_{123}}- 2 t^{p_2+p_{13}+p_{123}}- 2 t^{p_3+p_{12}+p_{123}}\\ &- 2 t^{p_1+p_2+p_{13}+p_{23}} - 2 t^{p_1+p_3+p_{12}+p_{23}} -2 t^{p_2+p_3+p_{12}+p_{13}} - 4 t^{p_{12}+p_{13}+p_{23}} - 4t^{p_{1}+p_{2}+p_{3}+p_{123}}. \end{align*}
Let $m$ be the minimum from 
\Cref{lem:valuated-delta-matroid-3}. 
Note that $m$ is also the minimum valuation of the terms appearing in the above expression for ${\rm HypDet}({\bf a})$.
Consider the set $\mathcal{S} = \{S\subset[3]:|S|\leq 1,  p_S+p_{S^C}=m\}$.
If one of the last two terms of the minimum appearing in \Cref{lem:valuated-delta-matroid-3} has valuation $m$, then the coefficient  $t^m$ in the expression for  ${\rm HypDet}({\bf a})$ above is at most $-4+|\mathcal{S}| - 2 \binom{|\mathcal{S}|}{2}<0$, showing that  ${\rm HypDet}({\bf a})<0$. 

If neither  of the last two terms of the minimum appearing in \Cref{lem:valuated-delta-matroid-3} has valuation $m$, then, since $p\in \Delta$-${\rm Dr}(3)$,  $|\mathcal{S}|\geq 2$ and the coefficient of $t^m$ in ${\rm HypDet}({\bf a})$ is 
$|\mathcal{S}| - 2 \binom{|\mathcal{S}|}{2}$. This is strictly negative when $|\mathcal{S}|\geq 3$, implying ${\rm HypDet}({\bf a})<0$. If $|\mathcal{S}|=2$, then the coefficient of $t^m$ in ${\rm HypDet}({\bf a})$ is zero. The only remaining terms with positive coefficients are $t^{2(p_{S}+p_{S^C})}$ for $S\subseteq [3]$ with  $S\not\in \mathcal{S}$. 
Let $m'=\min\{2(p_{S}+p_{S^C}): S\subseteq [3], |S|\leq 1, S\not\in \mathcal{S}\}$. 
Note that $-2t^{(m+m')/2}$ appears in the above expression for ${\rm HypDet}({\bf a})$ and has valuation strictly less than that of $t^{m'}$ and therefore less that of any term with positive coefficient. It follows that the leading term of ${\rm HypDet}({\bf a})$ has negative coefficient and thus ${\rm HypDet}({\bf a})<0$. 
\end{proof}

We note that for $n\geq 4$, there are valuated $\Delta$-matroids that are not representable as the valuations of principal minors of any matrix. This follows from the dimension counts in \Cref{sec:dim}.

\begin{corollary}
There exists $p\in \Delta$-${\rm Dr}(4)$ with $p_{\emptyset}=1$ that is not achieved as the valuations of principal minors of a $4\times 4$ matrix over $K$.
\end{corollary}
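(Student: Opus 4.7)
\emph{Plan.} The statement follows from a dimension count, continuing the discussion of \Cref{sec:dim}. Let $\pi\colon K^{4\times 4}\to K^{2^4}$ be the principal minor map $A\mapsto (A_S)_{S\subseteq [4]}$, and let $X:=\overline{\pi(K^{4\times 4})}$ be the Zariski closure of its image. Diagonal conjugation $A\mapsto DAD^{-1}$ by any invertible diagonal matrix $D$ preserves every principal minor, and for a generic $A$ the only diagonal $D$ satisfying $DAD^{-1}=A$ is a scalar; hence the generic orbit of this action has dimension $n-1=3$, which forces $\dim X \le n^2-(n-1)=13$. Since the value group $\Gamma$ is dense in $\R$ by the standing assumption of this section, the Bieri--Groves theorem yields that $\trop(X)\subseteq (\R\cup\{\infty\})^{2^4}$ has dimension at most $13$, and every representable tuple $(\nu(A_S))_{S\subseteq[4]}$ lies in $\trop(X)$.

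In the opposite direction, the proof of \Cref{prop:DimDr4} exhibits a relatively open piece of $\Delta$-$\Dr(4)$ of codimension $1$, hence dimension $15$, in $\R^{16}$. Since adding a constant function to $p$ preserves the induced regular subdivision, this open piece is invariant under the one-parameter family of shifts $p\mapsto p+c\cdot\mathbf{1}$, and therefore meets every hyperplane $\{p_\emptyset=c\}$ transversally in a relatively open set of dimension $14$. In particular, for any prescribed value $c$ of $p_\emptyset$, there is a $14$-dimensional family of elements of $\Delta$-$\Dr(4)$ with that value.

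Since $14>13$, the slice $\Delta$-$\Dr(4)\cap\{p_\emptyset = c\}$ cannot be contained in $\trop(X)$, and any $p$ lying in this slice but outside $\trop(X)$ is the required non-representable valuated $\Delta$-matroid. The main technical point is the dimension bound $\dim X\le 13$; I would establish it via the diagonal-conjugation argument above, though one may equally well appeal to the classical fact that the principal minor variety of $n\times n$ matrices has dimension $n^2-n+1$. No additional structural input beyond \Cref{prop:DimDr4} and this bound is needed.
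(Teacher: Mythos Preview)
Your proposal is correct and follows essentially the same dimension-count argument as the paper: both bound the dimension of the principal minor variety by $n^2-n+1=13$ (the paper cites Stouffer, you supply the diagonal-conjugation argument directly), pass to the tropicalization via Bieri--Groves/the structure theorem, and compare with the $14$-dimensional slice of $\Delta$-$\Dr(4)$ coming from \Cref{prop:DimDr4}. Your treatment of the slice $\{p_\emptyset=c\}$ via the shift-invariance $p\mapsto p+c\cdot\mathbf{1}$ is slightly more explicit than the paper's, but the substance is the same.
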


\begin{proof}
    By \cite{Stouffer}, over any field $K$, the image of the principal minors of an $n\times n$ matrix are contained in a variety of dimension $n^2-n+1$. By the structure theorem in tropical geometry \cite[Thm 3.3.5]{TropBook}, the image of this set under coordinate-wise valuation has dimension at most $n^2-n+1$. 
    In particular, for $n=4$, this has dimension $13$. 
    
    On the other hand, by \Cref{prop:DimDr4},  $\Delta$-${\rm Dr}(4)$ has codimension one in $\R^{16}$. It is not difficult to check that the set of $p\in \Delta$-${\rm Dr}(4)$ with $p_{\emptyset}=0$ therefore has dimension $14$. 
\end{proof}

\subsection{Extensions of Theorem~\ref{thm:HermValDelta} and conjectures}

By taking any valued field with the trivial involution $\overline{a} = a$, we obtain the following from \Cref{thm:HermValDelta}.

\begin{corollary} \label{cor:sym}
    The valuations of the principal minors of symmetric and skew-symmetric matrices over $K$ are valuated $\Delta$-matroids. 
\end{corollary}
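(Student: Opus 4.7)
The plan is to derive this corollary as an immediate specialization of \Cref{thm:HermValDelta}. Applying that theorem requires three ingredients: an automorphic involution on $K$ satisfying $\nu(a) = \nu(\overline{a})$, a value group dense in $\R$, and a Hermitian or skew-Hermitian matrix. I would take the involution on $K$ to be the identity, so that the fixed field is $F = K$ and the compatibility $\nu(a) = \nu(\overline{a})$ holds trivially. Under this identity involution, the conditions $\overline{A} = A^T$ and $\overline{A} = -A^T$ reduce exactly to $A$ being symmetric and skew-symmetric respectively.

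The only hypothesis of \Cref{thm:HermValDelta} that requires a separate word is the density of the value group. If $\nu$ on $K$ already has value group dense in $\R$, there is nothing to do. Otherwise, I would pass to the extension of $K$ constructed at the beginning of the section: Puiseux series $K\ps$ if $\nu$ is trivial, or the transcendental extension $K(t)$ with $\nu(\sum a_i t^i) = \min_i\{\nu(a_i)+i\beta\}$ for suitable $\beta \notin \alpha\Q$ if $\Gamma = \alpha\Z$ is discrete. In either case the identity involution extends to the identity involution on the larger field (acting coefficientwise or fixing $t$), still trivially satisfying $\nu(a) = \nu(\overline{a})$. A symmetric or skew-symmetric matrix over $K$ remains symmetric or skew-symmetric over the extension, with the same principal minors and the same valuations of those minors.

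Applying \Cref{thm:HermValDelta} to $A$ viewed over this (possibly extended) field then yields that $p_S = \nu(A_S)$ is a valuated $\Delta$-matroid, which is the desired conclusion. There is no substantive obstacle: the bulk of the work was already carried out in proving \Cref{thm:HermValDelta}, and the corollary amounts to the observation that the theorem is general enough to include the trivial-involution case together with a routine reduction to the dense value group hypothesis.
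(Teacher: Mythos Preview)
Your proposal is correct and matches the paper's approach exactly: the paper derives the corollary in one sentence by taking the involution to be the identity, so that Hermitian and skew-Hermitian reduce to symmetric and skew-symmetric. Your extra paragraph on the density of the value group is not needed as a separate step here, since the paper already made that reduction once and for all at the start of the section (the standing assumption that $\Gamma$ is dense, arranged via Puiseux series or a transcendental extension), but including it does no harm.
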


In fact, the above result can be extended to matrices which are skew-symmetric plus a rank one symmetric matrix.

\begin{corollary} \label{cor:skewsymplusone}
    Let $A = B + C$ where $B \in K^{n \times n}$ is skew-symmetric, and $C \in K^{n \times n}$ is symmetric of rank one. Then the valuations of the principal minors of $A$ is a valuated $\Delta$-matroid.
\end{corollary}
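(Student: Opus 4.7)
The plan is to embed $A = B + C$ in a larger skew-symmetric matrix. Writing $C = c\,vv^T$ with $c\in K$ and $v\in K^n$, and passing to $K(\sqrt{c})$ with an extension of the valuation if necessary, we may absorb $\sqrt{c}$ into $v$ and assume $C = vv^T$. Set
\[
\widetilde B := \begin{pmatrix} B & v \\ -v^T & 0 \end{pmatrix} \in K^{(n+1)\times (n+1)};
\]
this matrix is skew-symmetric, so by \Cref{cor:sym} the function $p : \{0,1\}^{n+1} \to \R \cup \{\infty\}$ given by $p_T := \nu(\widetilde B_T)$ is a valuated $\Delta$-matroid with $p_T = \infty$ whenever $|T|$ is odd. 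The key identity to establish is $A_S = \widetilde B_S + \widetilde B_{S\cup\{n+1\}}$ for every $S\subseteq [n]$, which follows by combining the matrix determinant lemma with the Schur identity $v^T \mathrm{adj}(M)\,v = \det\!\begin{pmatrix} M & v \\ -v^T & 0\end{pmatrix}$ at $M = \mathrm{diag}(x) + B$ and reading off coefficients in $x$. Since skew-symmetry of $\widetilde B$ kills odd-size principal minors, exactly one summand survives, and $\nu(A_S) = p_{\phi(S)}$, where $\phi(S) := S$ if $|S|$ is even and $\phi(S) := S \cup \{n+1\}$ otherwise.

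It then remains to show that $q : \{0,1\}^n \to \R \cup \{\infty\}$ defined by $q_S := p_{\phi(S)}$ is a valuated $\Delta$-matroid. Suppose for contradiction that $\cS_q$ contains an edge of length $\geq 3$; by \Cref{prop:local-edge-length} we may take it to have length $3$ or $4$. Every $\sigma \in B_n$ lifts to a parity-preserving $\widetilde\sigma \in B_{n+1}$ satisfying $\phi\circ\sigma = \widetilde\sigma\circ\phi$ (permutations extend fixing $n+1$; a single bit flip of $i\in[n]$ lifts to the simultaneous bit flip of $i$ and $n+1$), so by $B_n$-symmetry---applied to $q$, with the corresponding $B_{n+1}$-action on $p$---we may assume the edge is $[e_\varnothing, e_{\{1,2,3\}}]$ (length $3$) or $[e_\varnothing, e_{\{1,2,3,4\}}]$ (length $4$). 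Applying \Cref{prop:cellcharacterization}(2) at the midpoint and substituting $q_S = p_{\phi(S)}$ produces strict inequalities among values of $p$ supported in the $4$-face of $[0,1]^{n+1}$ with free coordinates $\{1,2,3,n+1\}$ in the length-$3$ case (since $\phi(\{1,2,3\}) = \{1,2,3,n+1\}$) or $\{1,2,3,4\}$ in the length-$4$ case.

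The contradiction will be closed by a parity analysis of the circuit representations of $(\tfrac12,\tfrac12,\tfrac12,\tfrac12)$ from \Cref{prop:n4-circuits}: only the four type-(i) pair-supports $\{S, S^C\}$ with $|S|$ even consist entirely of even-size subsets of $[4]$; the type-(ii) and type-(iii) supports $\{\varnothing,14,123,234\}$ and $\{\varnothing,123,124,134,234\}$ have mixed parities, and since every bit flip in $B_4$ flips the parity of each support element uniformly, every orbit member retains at least one odd-size subset. These non-pair circuits hence contribute $+\infty$ against $p$, and the strict inequalities derived from the $q$-edge condition become precisely the statement that one antipodal pair uniquely attains the minimum in \Cref{thm:n4-charac}(2) for $p$ restricted to the relevant $4$-face, contradicting its valuated $\Delta$-matroid property. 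The main obstacle will be this parity-orbit analysis, which ensures that the $+\infty$ entries eliminate exactly the non-antipodal circuit types so that the $q$-edge condition translates directly into a failed $4$-face condition for $p$.
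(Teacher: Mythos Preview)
Your approach is correct and genuinely different from the paper's. The paper extends $K$ to $K(t)$ equipped with the involution $t \mapsto -t$, observes that $A' = tB + C$ is Hermitian over this field, and then shows directly (via the matrix determinant lemma, together with the vanishing of odd-size skew-symmetric minors and of $v^T\,\mathrm{adj}(B[S])\,v$ for even $|S|$) that $\nu(A_S) = \nu(A'_S)$; Theorem~\ref{thm:HermValDelta} then applies to $A'$. Your route instead embeds $A$ in the $(n{+}1)\times(n{+}1)$ skew-symmetric matrix $\widetilde B$, applies Corollary~\ref{cor:sym} to $\widetilde B$, and descends combinatorially via the parity map $\phi$. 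The paper's argument is shorter and stays on the algebraic side; yours trades the involution trick for a combinatorial descent that makes explicit how the even structure of $p$ on $[n{+}1]$ forces the $\Delta$-matroid structure of $q$ on $[n]$, which is of independent interest. Your Schur identity, the parity bookkeeping showing exactly one of $\widetilde B_S,\widetilde B_{S\cup\{n+1\}}$ survives, the $B_n\to B_{n+1}$ lift, and the orbit-parity analysis of the circuits in Proposition~\ref{prop:n4-circuits} are all correct.

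One point needs care. Your invocation of Proposition~\ref{prop:local-edge-length} to reduce to edges of length $3$ or $4$ is not licensed as stated: that proposition is for subdivisions of the \emph{full} cube, whereas $\dom q$ may be a proper subset of $\{0,1\}^n$, and the key step of its proof (finding a $2$-cell containing the minimal bad edge) can fail when the restriction of $\dom q$ to the relevant face consists of only the two endpoints. The fix is short: first show $\dom q$ is a $\Delta$-matroid. This follows from your $\phi$-lift. Given $S,S'\in\dom q$ and $a\in S\Delta S'$, apply the exchange axiom in the even $\Delta$-matroid $\dom p$ to $\phi(S),\phi(S')$ at $a$; evenness forces the exchange element $b\neq a$, and either $b\in[n]$ (then $\phi(S)\Delta\{a,b\}=\phi(S\Delta\{a,b\})$, so $S\Delta\{a,b\}\in\dom q$) or $b=n{+}1$ (then $\phi(S)\Delta\{a,n{+}1\}=\phi(S\Delta\{a\})$, so $S\Delta\{a\}\in\dom q$, i.e.\ the exchange with $b=a$). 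With $\dom q$ a $\Delta$-matroid in hand, the theorem following Corollary~4.3 reduces the question to $4$-faces, and your length-$3$ and length-$4$ analyses then cover exactly what is needed.
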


\begin{proof}
    Let $K(t)$ be the transcendental field extension of $K$ by an element $t$. Invoking \cite[Theorem 2.2.1]{ValuedFields}, we can extend $\nu$ to a valuation on $K(t)$ by defining $\nu(\sum_i a_i t^i) = \min_i\{\nu(a_i)\}$ and $\nu(f/g)=\nu(f)-\nu(g)$. The field $K(t)$ has an automorphic involution $t\mapsto -t$ that preserves the valuation.

    Define $A' = tB + C$. This is a matrix over $K(t)$ which is Hermitian with respect to the above involution. We claim that $\nu(A_S) = \nu(A_S')$ for all $S \subset [n]$. Since the valuations of principal minors of $A'$ is a valuated $\Delta$-matroid by \Cref{thm:HermValDelta}, this will prove the desired result.
    
    We can write $C = \alpha v v^T$ where $\alpha \in K$ and $v \in K^n$.
    Let $B[S]$ denote the principal submatrix of $B$ with rows and columns indexed by $S$, so that $B_S=\det(B[S])$. Similarly, for the vector $v\in K^n$, let $v[S]$ denote the subvector consisting of with entries of $v$ indexed by $S$. 
    Then $A[S] = B[S]+ \alpha v[S]v[S]^T$ and $A'[S] = tB[S]+ \alpha v[S]v[S]^T$.
    Using the matrix determinant lemma, we 
    have
    \[   A_S  = B_S + \alpha v[S]^T {\rm adj}(B[S]) v[S] \ \ \text{ and } \ \
        A'_S  = t^{|S|}B_S + \alpha t^{|S|-1}v[S]^T {\rm adj}(B[S]) v[S]. \]
    where ${\rm adj}(M)$ is the adjugate matrix of $M$.
    If $|S|$ is odd, then $B_S= 0$ since $B[S]$ is skew-symmetric. In this case
    $A_S = \alpha v[S]^T {\rm adj}(B[S]) v[S]$  and 
    $A'_S = t^{|S|-1}A_S$, 
    showing that $\nu(A_S) = \nu(A'_S)$, as desired.
    If $|S|$ is even, then ${\rm adj}(B[S])$ is skew-symmetric, so $v[S]^T {\rm adj}(B[S]) v[S] = 0$. In this case
    $A_S  = B_S$ and $A'_S  = t^{|S|}B_S$, again showing that $\nu (A_S) = \nu(A'_S)$.
\end{proof}

We conjecture that we can extend Corollary~\ref{cor:skewsymplusone} as follows.

\begin{conjecture} \label{conj:skewHermplusone}
    Let $K$ be a valued field with an automorphic involution preserving the valuation. Let $A = B + C$ where $B \in K^{n \times n}$ is skew-Hermitian, and $C \in K^{n \times n}$ is Hermitian of rank one. Then the valuations of the principal minors of $A$ form a valuated $\Delta$-matroid.
\end{conjecture}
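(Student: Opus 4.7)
The plan is to adapt the extension argument of \Cref{cor:skewsymplusone} to a general involution. Introduce a transcendental $t$ and extend the valuation on $K$ to $K(t)$ by the Gauss construction $\nu(\sum_i a_i t^i) = \min_i \nu(a_i)$, extending the involution by $\overline{t} = -t$ (first enlarging $K$ to have dense value group, if necessary). Consider $A' = tB + C \in K(t)^{n\times n}$. A direct check gives
\[\overline{A'} = (-t)(-B^T) + C^T = tB^T + C^T = (A')^T,\]
so $A'$ is Hermitian over $K(t)$, and \Cref{thm:HermValDelta} (applied over $K(t)$) implies that $S \mapsto \nu(A'_S)$ is a valuated $\Delta$-matroid on $[n]$.

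To use this to control $\nu(A_S)$, I would expand $A'_S$ via the matrix determinant lemma. Writing $C = \alpha v \overline{v}^T$ with $\alpha \in F$, we obtain
\[A'_S = t^{|S|} B_S + \alpha t^{|S|-1} \overline{v[S]}^T \mathrm{adj}(B[S]) v[S],\]
so under the Gauss valuation $\nu(A'_S) = \min\{\nu(B_S),\ \nu(\alpha \overline{v[S]}^T \mathrm{adj}(B[S]) v[S])\}$. Substituting $t=1$ recovers $A_S = B_S + \alpha \overline{v[S]}^T \mathrm{adj}(B[S]) v[S]$, which trivially satisfies $\nu(A_S) \geq \nu(A'_S)$. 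The conjecture would follow if $\nu(A_S) = \nu(A'_S)$ for every $S$.

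The key equality rests on an involution-parity argument. Using $\overline{B[S]} = -B[S]^T$ and $\overline{\mathrm{adj}(M)} = \mathrm{adj}(\overline{M})$, one checks that $\overline{B_S} = (-1)^{|S|} B_S$ and $\overline{\alpha \overline{v[S]}^T \mathrm{adj}(B[S]) v[S]} = (-1)^{|S|-1}\alpha \overline{v[S]}^T \mathrm{adj}(B[S]) v[S]$, so one of the two summands of $A_S$ is fixed by the involution and the other is anti-fixed, regardless of the parity of $|S|$. Assuming $\nu(2)=0$ (residue characteristic not $2$), the identity $2a = (a+b) + \overline{a+b}$ yields $\nu(a) \geq \nu(a+b)$ for $a$ fixed and $b$ anti-fixed, and symmetrically for $b$; combined with the basic bound $\nu(a+b) \geq \min(\nu(a),\nu(b))$, this forces $\nu(a+b) = \min(\nu(a),\nu(b))$. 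Applied to $a+b = A_S$, this gives $\nu(A_S) = \nu(A'_S)$, completing the argument.

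The main obstacle is the residue-characteristic-$2$ case, where the identity $\nu(a+b) = \min(\nu(a),\nu(b))$ for fixed $a$ and anti-fixed $b$ can fail; for instance $\nu(1+i) = 1/2$ in $\mathbb{Q}_2(i)$, so adding a fixed and an anti-fixed element of the same valuation can produce cancellation. In such fields $\nu(A_S)$ may strictly exceed $\nu(A'_S)$, and the reduction to $A'$ breaks down. Resolving the conjecture here would likely require either a deformation argument specializing $A$ from a family where cancellation is avoided, or a direct combinatorial verification using the local characterization of valuated $\Delta$-matroids in \Cref{thm:n4-charac}.
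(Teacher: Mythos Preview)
Your proposal matches the paper's own treatment essentially line for line: the statement is a conjecture that the paper does not prove in full, and your argument for the residue-characteristic $\neq 2$ case (extend to $K(t)$ with $\overline{t}=-t$, set $A'=tB+C$, apply \Cref{thm:HermValDelta}, then use the fixed/anti-fixed parity of the two summands together with $\nu(2)=0$ to force $\nu(A_S)=\nu(A'_S)$) is exactly the content of \Cref{lem:notchar2} and \Cref{prop:conjcase}. You also correctly identify the residue-characteristic $2$ obstruction, which is precisely where the paper leaves the conjecture open.
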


As discussed in the next section, this conjecture has implications for the representability of $\Delta$-matroids by isotropic subspaces of Hermitian spaces. We will prove the conjecture in the case where the residue field of $K$ does not have characteristic 2.

\begin{proposition} \label{prop:conjcase}
    Conjecture~\ref{conj:skewHermplusone} is true if the residue field of $K$ does not have characteristic 2.
\end{proposition}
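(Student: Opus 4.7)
The plan is to reduce to the Hermitian case handled by Theorem~\ref{thm:HermValDelta}, mirroring the proof of Corollary~\ref{cor:skewsymplusone}. First write $C = c\, v\,\overline{v}^T$ with $c \in F$ and $v \in K^n$, which is always possible for a Hermitian rank-one matrix. Then pass to the transcendental extension $K' = K(s)$ equipped with the Gauss valuation $\nu\bigl(\sum_i a_i s^i\bigr) := \min_i \nu(a_i)$ (in particular $\nu(s) = 0$) and the involution extended from $K$ by $\overline{s} = -s$. The residue-characteristic hypothesis gives $\operatorname{char} K \ne 2$, so this assignment genuinely extends the involution; the value group of $K'$ equals that of $K$ and hence remains dense, so the standing assumptions of Section~5 hold for $K'$. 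The matrix $A' := sB + C \in (K')^{n \times n}$ is Hermitian over $K'$, because $\overline{sB} = \overline{s}\,\overline{B} = (-s)(-B^T) = (sB)^T$; in particular Theorem~\ref{thm:HermValDelta} applies and $S \mapsto \nu(A'_S)$ is a valuated $\Delta$-matroid.

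It remains to show $\nu(A_S) = \nu(A'_S)$ for every $S \subseteq [n]$. The matrix determinant lemma applied to $A[S] = B[S] + c\,v[S]\,\overline{v[S]}^T$ and $A'[S] = sB[S] + c\,v[S]\,\overline{v[S]}^T$, combined with $\operatorname{adj}(sB[S]) = s^{|S|-1}\operatorname{adj}(B[S])$, yields
\[
A_S \;=\; \alpha + \beta, \qquad A'_S \;=\; s^{|S|-1}(s\alpha + \beta),
\]
where $\alpha := B_S$ and $\beta := c\,\overline{v[S]}^T\operatorname{adj}(B[S])\,v[S]$. Since $\nu(s) = 0$ and $s\alpha + \beta$ is a polynomial in $s$ of degree at most $1$ with coefficients $\beta$ and $\alpha$, the Gauss valuation gives $\nu(A'_S) = \nu(s\alpha + \beta) = \min\bigl(\nu(\alpha),\nu(\beta)\bigr)$.

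The corresponding identity $\nu(A_S) = \min(\nu(\alpha), \nu(\beta))$ over $K$ comes from the decomposition $K = F \oplus F^-$, where $F^- := \{a\in K : \overline{a} = -a\}$. Because the residue characteristic is not $2$, we have $2 \in \mathcal{O}^\times$, so every $x \in K$ decomposes uniquely as $x = x_+ + x_-$ with $x_\pm := \tfrac{1}{2}(x \pm \overline{x}) \in F^{\pm}$; the bounds $\nu(x_\pm) \ge \nu(x)$ then force $\nu(x) = \min\bigl(\nu(x_+),\nu(x_-)\bigr)$. A direct computation using $\overline{B} = -B^T$, $\operatorname{adj}(M^T) = \operatorname{adj}(M)^T$, and $\operatorname{adj}(-M) = (-1)^{|S|-1}\operatorname{adj}(M)$ yields $\overline{\alpha} = (-1)^{|S|}\alpha$ and $\overline{\beta} = (-1)^{|S|-1}\beta$, so in either parity exactly one of $\alpha,\beta$ belongs to $F$ and the other to $F^-$. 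Hence $\{(A_S)_+,(A_S)_-\} = \{\alpha,\beta\}$, and so $\nu(A_S) = \min(\nu(\alpha),\nu(\beta)) = \nu(A'_S)$, finishing the proof.

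The residue-characteristic hypothesis is used only in the last step, to secure $2 \in \mathcal{O}^\times$ and hence the decomposition $K = F \oplus F^-$ that separates $\alpha$ from $\beta$. This is the step I expect to be the main obstacle for the full conjecture: in residue characteristic $2$, $\alpha$ and $\beta$ can no longer be told apart by the involution, so an independent argument would be needed to rule out cancellations in $A_S = \alpha + \beta$.
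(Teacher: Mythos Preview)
Your argument is correct and follows essentially the same route as the paper's proof. Both pass to the transcendental extension $K(t)$ with the Gauss valuation and the involution $t\mapsto -t$, form the Hermitian matrix $A' = tB + C$, invoke Theorem~\ref{thm:HermValDelta}, and then compare $\nu(A_S)$ with $\nu(A'_S)$ via the matrix determinant lemma. Your decomposition $K = F \oplus F^-$ using $x_\pm = \tfrac12(x\pm\overline{x})$ and $2\in\mathcal{O}^\times$ is exactly the content of the paper's Lemma~\ref{lem:notchar2}, just packaged differently; the paper isolates the statement ``$\overline{a}=a$, $\overline{b}=-b \Rightarrow \nu(a+b)=\min(\nu(a),\nu(b))$'' as a standalone lemma, whereas you inline the same computation.
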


We begin by proving the following.

\begin{lemma} \label{lem:notchar2}
    Let $K$ be a field with nonarchimedean valuation $\nu$ and automorphic involution $a \mapsto \overline{a}$ preserving the valuation. Assume the residue field of $K$ does not have characteristic 2. Let $a$, $b \in K$ such that $\overline{a} = a$ and $\overline{b} = -b$. Then $\nu(a+b) = \min(\nu(a),\nu(b))$.
\end{lemma}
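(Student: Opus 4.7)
The plan is to argue by contradiction using the ultrametric inequality combined with the involution. The nontrivial case is $\nu(a) = \nu(b) = v$, since otherwise $\nu(a+b) = \min(\nu(a),\nu(b))$ follows from the standard properties of a nonarchimedean valuation. So assume for contradiction that $\nu(a) = \nu(b) = v$ but $\nu(a+b) > v$.

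Set $c = a + b$. Since the involution preserves the valuation, and since $\overline{a} = a$, $\overline{b} = -b$, I would compute $\overline{c} = \overline{a} + \overline{b} = a - b$. Thus
\[
2a = c + \overline{c}, \qquad 2b = c - \overline{c}.
\]
Since $\nu(c) > v$ and $\nu(\overline{c}) = \nu(c) > v$, the ultrametric inequality gives $\nu(c + \overline{c}) > v$, so $\nu(2a) > v$.

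The final step is to observe that the assumption that the residue field of $K$ has characteristic different from $2$ forces $2$ to be a unit in $\mathcal{O}$, hence $\nu(2) = 0$. Therefore $\nu(2a) = \nu(a) = v$, contradicting $\nu(2a) > v$. This contradiction shows $\nu(a+b) = \min(\nu(a),\nu(b))$.

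I do not anticipate a genuine obstacle here; the only subtlety is making sure the residue-characteristic hypothesis is invoked exactly at the point where one needs $\nu(2) = 0$, which is precisely the place where cancellation between the ``real'' and ``imaginary'' parts could otherwise occur.
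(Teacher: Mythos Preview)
Your proof is correct and essentially identical to the paper's: both reduce to the case $\nu(a)=\nu(b)$, write $2a=(a+b)+(a-b)$, use $\overline{a+b}=a-b$ to equate the valuations of the two summands, and invoke $\nu(2)=0$ from the residue-characteristic hypothesis. The only cosmetic difference is that you phrase it as a contradiction while the paper writes the chain of inequalities directly.
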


\begin{proof}
    We only need to prove this when $\nu(a) = \nu(b)$.
    Since the residue field of $K$ does not have characteristic 2, we have $\nu(2) = 0$. Therefore,
    \[
    \nu(a) = \nu(2a) = \nu( (a+b) + (a-b) ) \ge \min ( \nu(a+b), \nu(a-b) ) = \nu(a+b)
    \]
    where the last equality holds since $\overline{a+b} = a-b$. Thus $\nu(a+b) \le \min(\nu(a),\nu(b))$ since $\nu(a) = \nu(b)$. Hence $\nu(a+b) = \min(\nu(a),\nu(b))$.
\end{proof}

\begin{proof}[Proof of Proposition~\ref{prop:conjcase}]
    Let $K(t)$ be the transcendental field extension of $K$ by an element $t$, and extend the valuation on $K$ to $K(t)$ as in the proof of Corollary~\ref{cor:skewsymplusone}. Equip $K(t)$ with the automorphic involution sending $t$ to $-t$ and $a$ to $\overline{a}$ for all $a \in K$. This involution preserves the valuation on $K(t)$.

    Define $A' = tB + C$. This is a matrix over $K(t)$ which is Hermitian with respect to the above involution. We claim that $\nu(A_S) = \nu(A_S')$ for all $S \subset [n]$, which would give the desired result by \Cref{thm:HermValDelta}.

    Let $F$ be the fixed field of the involution on $K$.
    We can write $C = \alpha v \overline{v}^T$ where $\alpha \in F$ and $v \in K^n$. As in the proof of Corollary~\ref{cor:skewsymplusone}, we have
    \[   A_S  = B_S + \alpha \overline{v}[S]^T {\rm adj}(B[S]) v[S] \ \ \text{ and } \ \
        A'_S  = t^{|S|}B_S + \alpha t^{|S|-1}\overline{v}[S]^T {\rm adj}(B[S]) v[S]. \]
    Let $G$ denote the set of all $a \in K$ such that $\overline{a} = -a$. If $|S|$ is odd, then since $B$ is skew-Hermitian, we have $B_S \in G$ and $\alpha \overline{v}[S]^T {\rm adj}(B[S]) v[S] \in F$. If $|S|$ is even, then we have $B_S \in F$ and $\alpha \overline{v}[S]^T {\rm adj}(B[S]) v[S] \in G$. Either way, by Lemma~\ref{lem:notchar2}, we have
    \[
    \nu(A_S) = \min( \nu(B_S), \nu(\alpha \overline{v}[S]^T {\rm adj}(B[S]) v[S]) ).
    \]
    By definition, this also the valuation of $A_S'$, completing the proof.
\end{proof}

When $\chara K = 2$, Conjecture~\ref{conj:skewHermplusone} is true because then $A$ is Hermitian. In this case we make another conjecture. Assume $\chara K = 2$. Let $F$ be the fixed field of the involution on $K$, and assume $F \neq K$. By standard Galois theory, we have $K = F[\omega]$ where $\omega^2 + \omega + \beta = 0$ for some $\beta \in F$, and $\overline{\omega} = \omega + 1$.

\begin{conjecture} \label{conj:char2plusone}
    Let $K = F[\omega]$ be as above. Let $A = B + \omega C$ where $B$, $C \in K^{n \times n}$ are Hermitian and $C$ has rank one. Then the valuations of the principal minors of $A$ form a valuated $\Delta$-matroid.
\end{conjecture}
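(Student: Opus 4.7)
The plan is to reduce \Cref{conj:char2plusone} to \Cref{thm:HermValDelta} by introducing a transcendental parameter, following the template of \Cref{cor:skewsymplusone} and \Cref{prop:conjcase}.

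First, write the rank-one Hermitian matrix as $C = \alpha v \overline{v}^T$ with $\alpha \in F$ and $v \in K^n$. The matrix determinant lemma gives
\[
A_S \;=\; B_S + \omega\, \gamma_S, \qquad \gamma_S \;:=\; \alpha\, \overline{v}[S]^T\, {\rm adj}(B[S])\, v[S].
\]
Because $B[S]$ is Hermitian, both $B_S$ and ${\rm adj}(B[S])$ are Hermitian (the former as a scalar in $F$), and since $\alpha \in F$ a short calculation shows $\overline{\gamma_S} = \gamma_S$. Thus $B_S, \gamma_S \in F$ for every $S$.

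Next, I would form the transcendental extension $K(t)$, extend the valuation by the Gauss formula $\nu\!\left(\sum_i a_i t^i\right) = \min_i \nu(a_i)$, and extend the involution by $\overline{t} = t$. The matrix $A' := B + tC$ is then genuinely Hermitian over $K(t)$, so by \Cref{thm:HermValDelta} the function $S \mapsto \nu(A'_S)$ is a valuated $\Delta$-matroid. The same determinant computation gives $A'_S = B_S + t\gamma_S$, and the Gauss formula yields $\nu(A'_S) = \min(\nu(B_S), \nu(\gamma_S))$. It therefore suffices to establish the identity $\nu(A_S) = \nu(A'_S)$.

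In the \emph{unramified case}, where $x^2 + x + \beta$ remains irreducible after reducing $\beta$ modulo $\mathfrak m$, the element $\omega$ is a unit (since $2\nu(\omega) = \nu(\beta) = 0$) and a residue-field argument shows that $1, \bar\omega$ are linearly independent over the residue field of $F$. This yields $\nu(a + \omega b) = \min(\nu(a), \nu(b))$ for all $a, b \in F$, so applying this with $a = B_S$ and $b = \gamma_S$ gives $\nu(A_S) = \nu(A'_S)$ and finishes this case.

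The main obstacle is the \emph{ramified case}: after an Artin--Schreier shift $\omega \mapsto \omega + y_0$ we may assume $\beta \in \mathfrak m$, so that $\nu(\omega) = \nu(\beta)/2 > 0$ and $\omega$ is a non-unit. Choosing $\nu(t) := \nu(\omega)$ in the Gauss extension still gives $\nu(A'_S) = \min(\nu(B_S), \nu(\omega) + \nu(\gamma_S))$, which agrees with $\nu(A_S)$ whenever these two summands have different valuations; however, when they coincide and $\nu(\omega) \in \nu(F^*)$, genuine cancellation between $B_S$ and $\omega\gamma_S$ in the $F$-basis $\{1, \omega\}$ of $K$ can strictly raise $\nu(A_S)$ above $\nu(A'_S)$. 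Resolving this will likely require either a finer Newton-polygon analysis of $B_S + \omega\gamma_S$ relative to this basis, or the construction of a different auxiliary Hermitian matrix over $K(t)$ whose principal minors faithfully track the ramified arithmetic.
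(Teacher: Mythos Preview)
This statement is a \emph{conjecture} in the paper; the paper does not prove it in general. The only case the paper settles is the trivial-valuation case (Proposition~\ref{prop:conjcase2}), and its proof is exactly the template you use: form $A' = B + tC$ over $K(t)$ with $\overline t = t$, observe $A'$ is Hermitian so Theorem~\ref{thm:HermValDelta} applies, and then check that $\nu(a + \omega b) = \min(\nu(a),\nu(\omega b))$ for $a,b\in F$ so that $\nu(A_S)=\nu(A'_S)$. Your decomposition $A_S = B_S + \omega\gamma_S$ with $B_S,\gamma_S\in F$ is correct, and your unramified argument (when $\nu(\omega)=0$ and $[\omega]\notin\kk_F$) is valid and in fact strictly extends the paper's Proposition~\ref{prop:conjcase2} to nontrivially valued $K$. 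So as a partial result your proposal goes a bit further than the paper, by the same method.

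There is one concrete error in your description of the ramified case. You write that after an Artin--Schreier shift one may assume $\beta\in\mathfrak m$ and hence $\nu(\omega)>0$. This is impossible: since $\overline\omega=\omega+1$ we have $\omega+\overline\omega=1$, and the hypothesis $\nu(\omega)=\nu(\overline\omega)$ then forces $\nu(\omega)\le 0$. Equivalently $\nu(\beta)=\nu(\omega\overline\omega)=2\nu(\omega)\le 0$ for \emph{every} choice of Artin--Schreier generator. (In particular, your argument also shows that $\nu(\omega)=0$ automatically implies $[\omega]\notin\kk_F$, so ``unramified'' is simply the case $\nu(\omega)=0$.) The genuine ramified case is therefore $\nu(\omega)<0$, i.e.\ $\nu(\beta)<0$; your concern about cancellation between $B_S$ and $\omega\gamma_S$ is still legitimate there, and this is precisely the obstruction the paper leaves open. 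Your proposal is thus an honest partial proof that stops at essentially the same point the paper does, with the ramified direction mislabeled but the substance of the difficulty correctly identified.
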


\begin{proposition} \label{prop:conjcase2}
    Conjecture~\ref{conj:char2plusone} is true if the valuation is trivial.
\end{proposition}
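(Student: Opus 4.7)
Since the valuation on $K$ is trivial, $\nu(A_S) \in \{0,\infty\}$ for every $S$, so the proposition reduces to showing that $\{S \subseteq [n] : A_S \neq 0\}$ is a $\Delta$-matroid. The plan is to lift $A$ to a Hermitian matrix $A'$ over a transcendental extension of $K$ where \Cref{thm:HermValDelta} already applies, in the same spirit as the proofs of \Cref{cor:skewsymplusone} and \Cref{prop:conjcase}, and then check that $A'$ and $A$ have the same vanishing pattern of principal minors.

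The first step is to put $C$ in a convenient rank-one form. Since $C$ is Hermitian of rank one, matching the two expressions $C = uv^T$ and $C = \overline{C}^T = \overline{v}\,\overline{u}^T$ forces $C = \alpha\, w\, \overline{w}^T$ for some $\alpha \in F$ and $w \in K^n$. The matrix determinant lemma, which is valid over any commutative ring, then gives
\[
A_S \;=\; \det\!\bigl(B[S] + \omega\alpha\, w[S]\,\overline{w}[S]^T\bigr) \;=\; B_S + \omega\, g_S, \qquad g_S := \alpha\, \overline{w}[S]^T\,\mathrm{adj}(B[S])\, w[S],
\]
and a short calculation using $\overline{B[S]} = B[S]^T$, $\overline{\alpha} = \alpha$, and the fact that a scalar equals its own transpose shows $B_S, g_S \in F$.

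Next I would pass to the Puiseux extension $L = K\ps$, equipped with the valuation $\nu\!\left(\sum_q a_q t^q\right) = \min\{q : a_q \neq 0\}$ (so that the value group $\Q$ is dense in $\R$) and the coefficientwise involution fixing $t$; the standing hypotheses of \Cref{thm:HermValDelta} then hold. The matrix $A' := B + t\alpha\, w\,\overline{w}^T \in L^{n\times n}$ is Hermitian over $L$ because $\overline{t}=t$ and $\alpha \in F$, so \Cref{thm:HermValDelta} implies that $S \mapsto \nu(A'_S)$ is a valuated $\Delta$-matroid, whence its support $\{S : A'_S \neq 0\}$ is a $\Delta$-matroid.

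Finally, the same matrix determinant computation yields $A'_S = B_S + t\, g_S$. Since $B_S, g_S \in K$ while $1$ and $t$ are $K$-linearly independent in $L$, we have $A'_S = 0$ iff $B_S = g_S = 0$ iff $A_S = 0$. Thus the two supports coincide and $\{S : A_S \neq 0\}$ is a $\Delta$-matroid, as required. The only step with genuine content beyond this lifting bookkeeping is the initial rank-one Hermitian decomposition $C = \alpha w \overline{w}^T$ in characteristic $2$ — one must check that the usual identification of rank-one Hermitian matrices survives when $\overline{\omega} = \omega + 1$; the rest mirrors the argument of \Cref{cor:skewsymplusone}.
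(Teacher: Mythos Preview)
Your proof is correct and follows essentially the same route as the paper's. Both arguments decompose $A_S = B_S + \omega g_S$ with $B_S,g_S\in F$ via the matrix determinant lemma, use the $F$-linear independence of $1$ and $\omega$ (which the paper phrases as $\nu(a+b\omega)=\min(\nu(a),\nu(b\omega))$ for $a,b\in F$ under the trivial valuation), lift to a Hermitian matrix $A'$ over a transcendental extension with $t$ playing the role of $\omega$, and invoke \Cref{thm:HermValDelta}; your choice of $K\ps$ in place of $K(t)$ is a cosmetic difference.
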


\begin{proof}
    Assume the valuation $\nu$ is trivial. Then for all $a$, $b \in F$ we have $\nu(a + b \omega) = \min( \nu(a), \nu(b \omega) )$. Indeed, $a + b \omega = 0$ if and only if $a$ and $b \omega$ are both 0. The result then follows from an argument similar to the proof of Proposition~\ref{prop:conjcase}.
\end{proof}

\section{Representability via sesquilinear forms}

We now restate Theorem~\ref{thm:HermValDelta} in the language of sesquilinear forms and isotropic Grassmannians. This work was initiated by Gelfand and Serganova, who proved the below results for symplectic and quadratic forms on trivially-valued complex vector spaces. Their results were extended to arbitrary fields in \cite{CoxeterMatroids} and \cite{EFLS24}. The purpose of this section is to show that these results extend to non-trivial valuations and Hermitian forms.

We begin by recalling some polar geometry. We refer to \cite{Cameron} for further information.
Let $K$ be a field equipped with an automorphic involution $\iota : K \to K$, possibly the identity. We write $\overline{a}$ for $\iota(a)$. Let $V$ be a finite-dimensional vector space over $K$. A \emph{sesquilinear form} on $V$ is a biadditive function $b : V \times V \to K$ satisfying $b(\alpha x, \beta y) = \alpha \overline{\beta} b(x,y)$ for all $\alpha$, $\beta \in K$ and $x$, $y \in V$. 
A sesquilinear form is \emph{nondegenerate} if for all $x \in V \setminus \{0\}$ neither of the functions $b(x,\cdot)$ and $b(\cdot,x)$ are identically 0.
A sesquilinear form is \emph{reflexive} if $b(x,y) = 0$ implies $b(y,x) = 0$.
A sesquilinear form is \emph{Hermitian} if $b(x,y) = \overline{b(y,x)}$.

We have the following characterization of nondegenerate reflexive sesquilinear forms.

\begin{theorem}{\cite[Thm 3.3]{Cameron}}
Let $b$ be a nondegenerate reflexive sesquilinear form over a field with automorphic involution $\iota$. Then one of the following is true.
\begin{enumerate}
    \item $\iota$ is the identity, and $b$ is alternating ($b(x,x) = 0$).
    \item $\iota$ is the identity, and $b$ is symmetric ($b(x,y) = b(y,x)$).
    \item $\iota$ is not the identity, and $b$ is a scalar multiple of a Hermitian sesquilinear form.
\end{enumerate}
\end{theorem}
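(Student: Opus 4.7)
The plan is to carry out the classical polar geometry argument: extract a scalar-valued function from reflexivity, show it is constant, and then analyze the resulting $\epsilon$-symmetry identity by cases on whether $\iota$ is trivial. For each nonzero $v\in V$, the first step is to consider the two $K$-linear functionals $\phi_v,\psi_v:V\to K$ defined by $\phi_v(x)=b(x,v)$ and $\psi_v(x)=\overline{b(v,x)}$. Reflexivity says precisely that $\ker\phi_v=\ker\psi_v$, and nondegeneracy forces both to be nonzero. Hence they are proportional, so there exists a unique $\sigma(v)\in K^\ast$ with
\[
\overline{b(v,x)} \;=\; \sigma(v)\cdot b(x,v) \qquad \text{for all } x\in V.
\]

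The second step is to show that $\sigma$ is constant on $V\setminus\{0\}$. A direct computation using $b(\alpha x,\beta y)=\alpha\overline{\beta}b(x,y)$ shows $\sigma(\lambda v)=\sigma(v)$ for every $\lambda\in K^\ast$. For linearly independent $v,w$, expanding the defining identity at $v+w$ produces
\[
(\sigma(v)-\sigma(v+w))\,b(x,v) \;+\; (\sigma(w)-\sigma(v+w))\,b(x,w) \;=\; 0
\]
for every $x\in V$. Since $v,w$ are independent, the kernels of $b(\cdot,v)$ and $b(\cdot,w)$ are distinct hyperplanes, so I can pick $x$ with $b(x,v)\neq 0$ and $b(x,w)=0$, forcing $\sigma(v)=\sigma(v+w)$; the symmetric choice gives $\sigma(w)=\sigma(v+w)$, whence $\sigma(v)=\sigma(w)$.

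Writing $\epsilon$ for the common value of $\sigma$, the identity becomes $\overline{b(v,x)}=\epsilon\cdot b(x,v)$, and taking conjugates and substituting the identity back in yields $\epsilon\overline{\epsilon}=1$. Now I split cases: if $\iota=\mathrm{id}$, then $\epsilon^2=1$, so $\epsilon=\pm 1$, producing cases (1) and (2) of the theorem (in characteristic $\neq 2$, skew-symmetric forces alternating via $b(x,x)=-b(x,x)$; in characteristic $2$ one has $\epsilon=1$ and so $b$ is symmetric). If $\iota\neq\mathrm{id}$, then $K/F$ is a Galois extension of degree $2$, so Hilbert's Theorem 90 provides $\lambda\in K^\ast$ with $\epsilon=\lambda/\overline{\lambda}$. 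A short calculation then shows $b':=\lambda\cdot b$ satisfies $\overline{b'(y,x)}=b'(x,y)$, so $b'$ is Hermitian and $b=\lambda^{-1}b'$ is a scalar multiple of a Hermitian form.

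The main obstacle is the constancy step, where the separating vector $x$ must be produced from linear independence of $v$ and $w$ via the observation that $b(\cdot,v)$ and $b(\cdot,w)$ being proportional as $K$-linear functionals would force $v\in K\cdot w$ by nondegeneracy (since $b(x,v)=c\,b(x,w)$ rewrites as $b(x,v-\overline{c}w)=0$ for all $x$). A secondary subtlety is verifying that Hilbert 90 applies in characteristic $2$; this reduces to noting that an inseparable degree-$2$ extension cannot carry a nontrivial $F$-automorphism, so $K/F$ is automatically separable whenever $\iota$ is nontrivial.
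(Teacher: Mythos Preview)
The paper does not prove this theorem at all: it is quoted verbatim from Cameron's notes \cite[Thm~3.3]{Cameron} and used as background for the discussion of polar geometry, with no argument supplied in the paper itself. So there is no ``paper's own proof'' to compare against.

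That said, your argument is the standard one and is correct. The only points worth flagging are minor. First, in the constancy step you should note that if $\dim V=1$ there are no linearly independent pairs, but then the scalar-invariance $\sigma(\lambda v)=\sigma(v)$ already suffices. Second, your characteristic-$2$ remark for $\iota=\mathrm{id}$ is right but slightly understated: when $\epsilon=-1=1$ you land in case~(2), and any alternating form is automatically symmetric in characteristic~$2$, so cases~(1) and~(2) overlap there rather than being exclusive---which is consistent with the theorem's phrasing ``one of the following is true''. Finally, the Hilbert~90 step is clean; your observation that a purely inseparable quadratic extension admits no nontrivial automorphism is exactly what is needed to ensure $K/F$ is Galois.
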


In Case 1 we call $b$ a \emph{symplectic form}. Case 2 is usually not dealt with in full generality due to issues when $\chara K = 2$. Instead, we introduce quadratic forms. A \emph{quadratic form} on a vector space $V$ over $K$ is a function $q : V \to K$ for which there exists a bilinear form $b$ on $V$ satisfying
\[
q( \alpha x + \beta y ) = \alpha^2 q(x) + \alpha \beta b(x,y) + \beta^2 q(y)
\]
for all $\alpha$, $\beta \in K$ and $x$, $y \in V$. We necessarily have
\[
b(x,y) = q(x + y) - q(x) - q(y)
\]
so $b$ is a symmetric bilinear form determined by $q$. We call $b$ the bilinear form associated to $q$. If $\chara K \neq 2$, then $q$ is also determined by $b$ as $q(x) = \frac{1}{2}b(x,x)$. However, if $\chara K = 2$ there can be multiple quadratic forms associated to a bilinear form. A quadratic form $q$ with associated bilinear form $b$ is \emph{nondegenerate} if there is no $x \in V \setminus \{0\}$ such that $b(x,\cdot) = 0$ is identically 0 and $q(x) = 0$. (If $\chara K = 2$, a nondegenerate quadratic form can have a degenerate associated bilinear form.)

From now on, we will use the term ``Hermitian'' to mean we are in Case 3, that is the involution $\iota$ is not the identity.

Let $V$ be a finite-dimensional vector space equipped with a (nondegenerate) symplectic form $b$, a nondegenerate quadratic form $q$ with associated bilinear form $b$, or a nondegenerate Hermitian form $b$. In what follows, we will make statements for the symplectic and Hermitian cases, followed in parentheses by the corresponding statement for the quadratic case if different. An \emph{isotropic subspace} of $V$ is a linear subspace $W \le V$ such that $b$ (respectively, $q$) vanishes on $W$. Witt's theorem implies that all maximal isotropic subspaces of $V$ have the same dimension, called the \emph{Witt index} of $V$. If the Witt index is $n$, then $n \le \dim V / 2$, and we can write
\[
V = H_1 \oplus \dots \oplus H_n \oplus W
\]
where the summands are pairwise orthogonal with respect to $b$, $W$ is anisotropic (that is, $b(x,x) \neq 0$ (respectively, $q(x) \neq 0$) for all $x \in W \setminus \{0\}$), and each $H_i$ has a basis $\{e_i,f_i\}$ with $b(e_i,e_i) = b(f_i,f_i) = 0$ ($q(e_i) = q(f_i) = 0$) and $b(e_i,f_i) = 1$ for all $i$. The space of all maximal isotropic subspaces of $V$ is called the \emph{maximal isotropic Grassmannian}.

From now on assume $V$ has Witt index $n = \lfloor \dim V / 2 \rfloor$. Let $\{e_1,\dots,e_n,f_1,\dots,f_n\} \cup G$ be a basis of $V$ where $e_i$ and $f_i$ are as described above and $G$ is a basis of $W$. If $\dim V$ is odd, then we let $G = \{g\}$ where $b(g,g) = \alpha \neq 0$ ($q(g) = \alpha \neq 0$). Any maximal isotropic subspace $L$ can be represented by a $n \times (\dim V)$ full-rank matrix where the rows of the matrix form a basis for $L$ in the coordinates $(e_1,\dots,e_n,f_1,\dots,f_n) \cup G$. Let $\nu : K \to \R \cup \{\infty\}$ be a nonarchimedian valuation with $\nu(a) = \nu(\overline{a})$.
For each $S \subseteq [n]$, let
\[
p_S(L) := \nu \left( \det M[\{e_i : i \in S\} \cup \{f_i : i \in [n] \setminus S\}] \right),
\]
that is, the valuation of the minor of $M$ using the columns corresponding to $e_i$ with $i \in S$ and $f_i$ with $i \in [n] \setminus S$. Our result is as follows.

\begin{theorem} \label{thm:isograssmannian}
    Let $V$ be as above. If $\iota$ is not the identity, assume $V$ has even dimension. Then for any maximal isotropic subspace $L$, the function $S \mapsto p_S(L)$ is a valuated $\Delta$-matroid.
\end{theorem}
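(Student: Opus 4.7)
My plan is to reduce the statement to \Cref{thm:HermValDelta} (with its specializations \Cref{cor:sym} and \Cref{cor:skewsymplusone}) by representing $L$ as the row-span of a matrix in \emph{graph form} and then expressing each $p_S(L)$ as the valuation of a principal minor of a single $n \times n$ matrix.

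The first step is a normalization. The involutions $e_i \leftrightarrow f_i$ are symmetries of the form (up to a sign in the symplectic and quadratic cases) and act on the indexing of the $p_S$ by symmetric difference $S \mapsto S \,\Delta\, \{i\}$; combined with the bit-flip $S \mapsto S^c$, which is a symmetry of the $\Delta$-Dressian, these allow me to reduce to the case $p_{[n]}(L) \ne \infty$---unless all $p_S = \infty$, in which case the induced subdivision is empty and the claim is vacuous. After left-multiplication by the corresponding invertible block I can then write $L = \mathrm{rowspan}[\,I_n \mid A \mid B\,]$ in the coordinates $(e_1, \dots, e_n, f_1, \dots, f_n) \cup G$, with $A \in K^{n \times n}$ and $B \in K^{n \times |G|}$. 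Writing out the isotropy condition on all linear combinations of rows against the pairings $b(e_i, f_j) = \delta_{ij}$ and $b(g,g) = \alpha$ (for $G = \{g\}$)---and using polarization in the Hermitian case to upgrade \emph{isotropic} to \emph{totally isotropic}---I expect to obtain: $A$ symmetric in the symplectic case, $A$ skew-Hermitian in the Hermitian case, $A$ alternating (skew-symmetric with vanishing diagonal) in the even-dimensional quadratic case, and $A = C - \alpha b b^T$ with $C$ alternating and $b$ the single column of $B$ in the odd-dimensional quadratic case.

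Next, a block-triangular expansion identifies $p_S(L)$ with the valuation of a principal minor of $A$. Reordering the $n$ columns $\{e_i : i \in S\} \cup \{f_j : j \in S^c\}$ so that the selected $e$-columns come first yields an $n \times n$ submatrix of the form $\left(\begin{smallmatrix} I_{|S|} & \ast \\ 0 & A[S^c, S^c] \end{smallmatrix}\right)$, whose determinant is $\det A[S^c, S^c]$. Thus $p_S(L) = \nu(A_{S^c})$. Setting $q_T := \nu(A_T)$, I then invoke \Cref{thm:HermValDelta} in the Hermitian case, \Cref{cor:sym} in the symplectic and even-dimensional quadratic cases, and \Cref{cor:skewsymplusone} in the odd-dimensional quadratic case to conclude that $q$ is a valuated $\Delta$-matroid. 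Since $p_S = q_{S^c}$ is the image of $q$ under the bit-flipping element of $B_n$, which is a symmetry of the hypercube preserving the family of $\Delta$-matroid polytopes, $p$ is itself a valuated $\Delta$-matroid.

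The main obstacle will be making the normalization clean: I need to check that the $e_i \leftrightarrow f_i$ swaps genuinely preserve (possibly up to an inessential sign or twist of the involution) the collection of maximal isotropic subspaces, so that the resulting transformation of $\{p_S\}$ is a bona fide $B_n$-symmetry rather than only an ad hoc cell permutation. A secondary concern is the characteristic-$2$ odd-dimensional quadratic case, where \Cref{cor:skewsymplusone} must be applied with $C$ alternating rather than merely skew-symmetric; the proof of that corollary relies on the vanishing of odd-size principal minors of alternating matrices, which remains valid in characteristic $2$ via the Pfaffian and so should cause no genuine trouble, though it warrants an explicit remark.
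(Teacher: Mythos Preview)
Your proposal is correct and follows essentially the same approach as the paper. The paper likewise puts $L$ in graph form (they use $M=(A\mid I)$ rather than your $[I\mid A\mid B]$, so their $p_S(L)=\nu(A_S)$ without the final complementation), derives the same structural constraints on $A$ case by case, and invokes \Cref{thm:HermValDelta}, \Cref{cor:sym}, and \Cref{cor:skewsymplusone} exactly as you do. Your remarks about polarization in the Hermitian case and about needing $A$ alternating (not merely skew-symmetric) in characteristic~$2$ for \Cref{cor:skewsymplusone} are in fact more careful than the paper's treatment; the paper writes ``skew-symmetric'' throughout and does not flag the characteristic-$2$ subtlety, though its argument is only used with alternating matrices so no error results. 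For the normalization step, the paper argues that the projection of $L$ onto each $H_i$ is nontrivial (else $L$ sits in a space of smaller Witt index) and concludes that a transversal set of columns can be made independent; your escape clause ``unless all $p_S=\infty$'' is harmless but in fact never triggered, and you may want to supply the short Witt-index argument instead.
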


For example, if $\nu$ is trivial, then $p_S(L)$ defines an ordinary $\Delta$-matroid. These $\Delta$-matroids in the cases where $V$ is quadratic of odd dimension, symplectic, or quadratic of even dimension are known in the literature as representable of type $B$, $C$, and $D$ respectively. For $V$ Hermitian of even dimension, the $\Delta$-matroids we obtain are, up to a symmetry of the hypercube, those obtained from Hermitian matrices as described in the previous section.

We conjecture that Theorem~\ref{thm:isograssmannian} can be extended to odd-dimensional Hermitian $V$.

\begin{conjecture} \label{conj:isograssmannianodd}
    The conclusion of Theorem~\ref{thm:isograssmannian} holds when $\iota$ is not the identity and $V$ has odd dimension.
\end{conjecture}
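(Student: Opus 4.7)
The plan is to mirror the argument that underlies \Cref{thm:isograssmannian} and reduce to \Cref{conj:skewHermplusone} and \Cref{conj:char2plusone} on structured matrices. Fix a hyperbolic basis $e_1,\ldots,e_n,f_1,\ldots,f_n,g$ with $b(e_i,f_j)=\delta_{ij}$, $b(e_i,e_j)=b(f_i,f_j)=0$, and $b(g,g)=\alpha\in F^{*}$, and represent $L$ as the row space of an $n\times(2n{+}1)$ matrix $M=[A\mid B\mid c]$. Using the $B_n$-action on Plücker coordinates by $e_i\leftrightarrow f_i$, which preserves the Hermitian form and corresponds to flipping bits in $\{0,1\}^{n}$, reduce to the case that $B$ is invertible; this reduction is legitimate because the $2^{n}$ bi-complementary Plücker coordinates do not simultaneously vanish on any maximal isotropic subspace, just as in the even-dimensional case underpinning \Cref{thm:isograssmannian}.

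Once $B$ is invertible, set $C := B^{-1}A$ and $d := B^{-1}c$. The isotropy condition $A\overline{B}^{T}+B\overline{A}^{T}+\alpha\, c\,\overline{c}^{T}=0$ becomes, after conjugating by $B^{-1}$,
\[
C+\overline{C}^{T}\;=\;-\alpha\,d\,\overline{d}^{T}.
\]
Because $\alpha=\overline{\alpha}$, the right-hand side is Hermitian of rank at most one. In residue characteristic $\neq 2$, $C$ decomposes as $C_s + C_h$ with $C_s=(C-\overline{C}^T)/2$ skew-Hermitian and $C_h=-\alpha\,d\,\overline{d}^T/2$ Hermitian of rank at most one, matching the hypothesis of \Cref{conj:skewHermplusone}. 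In residue characteristic $2$ (where $K=F[\omega]$ as in \Cref{conj:char2plusone}), a short computation shows that $C$ instead decomposes as $H+\omega H'$ with $H, H'$ Hermitian and $H'=-\alpha\,d\,\overline{d}^T$ of rank at most one, matching the hypothesis of \Cref{conj:char2plusone}. Either way, a routine Laplace expansion gives
\[
\det\bigl[A_{:,S}\,\big|\,B_{:,S^{C}}\bigr]\;=\;\pm\det(B)\cdot C_S
\]
for every $S\subseteq[n]$, so $p_{S}(L)=\nu(\det B)+\nu(C_{S})$, and $p$ induces the same subdivision as $S\mapsto\nu(C_{S})$. Granted the appropriate conjecture, the principal minors of $C$ form a valuated $\Delta$-matroid, completing the argument.

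The main obstacles are therefore \Cref{conj:skewHermplusone} and \Cref{conj:char2plusone} themselves: the first is proved in residue characteristic $\neq 2$ (\Cref{prop:conjcase}) and the second in the trivial-valuation case (\Cref{prop:conjcase2}), so \Cref{conj:isograssmannianodd} follows unconditionally in those ranges. A direct attack bypassing the matrix reduction would presumably proceed through the Rayleigh-difference framework of \Cref{thm:HermValDelta2}: form a determinantal generating polynomial $f$ for the bi-complementary Plücker coordinates of $L$ and seek a factorization $\Delta_{ij}(f)=\sigma^{n-1}g_{ij}(x)\overline{g_{ij}}(\sigma x)$ as in \Cref{lem:factorization}. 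In the even-dimensional hyperbolic case, such a factorization follows almost directly from Dodgson condensation after a $GL_{r}$-change of coordinates; the anisotropic direction $g$ contributes a rank-one Hermitian perturbation whose interaction with Dodgson condensation modulo $\mathfrak{m}$ in residue characteristic $2$ is exactly the crux that has kept the problem open.
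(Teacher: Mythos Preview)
Your proposal is not a proof of the conjecture, and neither is there one in the paper: the statement is left open. What you have written is the same reduction the paper carries out in the proof of \Cref{prop:partialisoconj} and the remark following it—normalize to $M = [C \mid I \mid d]$, read off $C + \overline{C}^{T} = -\alpha\,d\,\overline{d}^{T}$ from isotropy, and reduce to \Cref{conj:skewHermplusone} or \Cref{conj:char2plusone}. Your conclusion that the conjecture holds when $\chara\kk \neq 2$ or when the valuation is trivial is exactly the content of \Cref{prop:partialisoconj}.

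One correction to your case analysis: the split governing which matrix conjecture applies should be on $\chara K$, not on the residue characteristic. The decomposition $K = F[\omega]$ with $\overline{\omega} = \omega + 1$ that you invoke requires $\chara K = 2$; when $\chara K \neq 2$ but $\chara\kk = 2$ (for instance a $2$-adic field carrying a nontrivial involution), you can still divide by $2$ and land in the setting of \Cref{conj:skewHermplusone}, not \Cref{conj:char2plusone}. The paper's remark after \Cref{prop:partialisoconj} makes the split on $\chara K$. This slip does not affect the two partial cases you actually claim, since \Cref{prop:conjcase} already assumes $\chara\kk \neq 2$, and under the trivial valuation one has $\chara\kk = \chara K$.
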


We do not have a full proof of this. See Proposition~\ref{prop:partialisoconj} for partial progress.
    
We split the proof of Theorem~\ref{thm:isograssmannian} into the symplectic, quadratic, and Hermitian cases. In all cases, we write $V = H_1 \oplus \dots \oplus H_n \oplus W$ as above.

\subsection*{Symplectic case}

In this case, we necessarily have $W = \{0\}$ and $\dim V = 2n$. Let $M$ be a matrix representing $L$. Note that the projection of $L$ onto any $H_i$ is nontrivial, because otherwise $L$ is contained in a space of smaller Witt index, contradicting the fact that $L$ is a maximal isotropic subspace. Hence, by swapping $e_i$ and $f_i$ for some set of $i$, the last $n$ columns of $M$ can be made linearly independent. Without loss of generality we may do this, because this corresponds to precomposing the function $p_S(L)$ with $S \mapsto S \Delta i$, and the new function is a $\Delta$-matroid if and only if the old one is. Performing row operations, we may assume $M$ is of the form $\begin{pmatrix} A & I \end{pmatrix}$, where $I$ is the $n \times n$ identity matrix. Then $b$ vanishes on $L$ if and only if $A$ is symmetric. Moreover, $p_S(L)$ is exactly $\nu(A_S)$. Hence, $p_S(L)$ is the valuation of the principal minors of a symmetric matrix $A$, which is a valuated $\Delta$-matroid by Theorem~\ref{cor:sym}.

\subsection*{Quadratic case}

Let $M$ be a matrix representing $L$. Assume first that $\dim V$ is even. Then in the coordinates of the chosen basis, we have
\[
q(x_1,\dots,x_n,y_1,\dots,y_n) = \sum_{i=1}^n x_i y_i.
\]
As above we may assume $M = \begin{pmatrix} A & I \end{pmatrix}$. Let $A_i$ denote the $i$-th column of $A$. Then $q$ vanishes on $L$ if and only if for all $u \in K^{n}$,
\[
    q(u^T M) = 0 \iff \sum_{i=1}^n (u^T A_i) u_i = 0 \iff u^T A u  = 0.
\]
The last equation is true for all $u$ if and only if $A$ is skew-symmetric. Hence $p_S(L)$ is the valuation of the principal minors of a skew-symmetric matrix $A$, which is a valuated $\Delta$-matroid by Theorem~\ref{cor:sym}.

Now suppose $\dim V$ is odd. In the coordinates of the chosen basis, we have
\[
q(x_1,\dots,x_n,y_1,\dots,y_n,z) = \sum_{i=1}^n x_i y_i + \alpha z^2.
\]
where $\alpha \neq 0$. We may assume $M = \begin{pmatrix} A & I & v \end{pmatrix}$ where $v$ is a column vector. Then $q$ vanishes on $L$ if and only if for all $u \in K^{n}$,
\[
    q(u^T M) = 0 \iff \sum_{i=1}^n (u^T A_i) u_i + \alpha (u^T v)^2 = 0 \iff u^T (A + \alpha vv^T) u  = 0
\]
The last equation is true for all $u$ if and only if $A + \alpha v v^T$ is skew-symmetric. Hence we can write $A = B - \alpha v v^T$ where $B$ is a skew-symmetric matrix. The result then follows from Corollary~\ref{cor:skewsymplusone}.

\subsection*{Hermitian case}

First assume $\dim V$ is even. Let $M$ be the matrix representing $L$. We may assume $M = \begin{pmatrix} A & I \end{pmatrix}$. Then $b$ vanishes on $L$ if and only if $A$ is skew-Hermitian. The result then follows from Theorem~\ref{cor:sym}. This concludes the proof of Theorem~\ref{thm:isograssmannian}.

\subsection{Partial results for odd-dimensional Hermitian}

We make the following partial progress toward proving Conjecture~\ref{conj:isograssmannianodd}.

\begin{proposition} \label{prop:partialisoconj}
    Conjecture~\ref{conj:isograssmannianodd} is true in the following cases.
    \begin{enumerate}
        \item The residue field of $K$ does not have characteristic 2.
        \item The valuation is trivial.
    \end{enumerate}
\end{proposition}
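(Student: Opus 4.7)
The plan is to mirror the reduction used for the odd-dimensional quadratic case in the proof of Theorem~\ref{thm:isograssmannian} above, and then invoke Proposition~\ref{prop:conjcase} or Proposition~\ref{prop:conjcase2} according to the subcase.

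Given a maximal isotropic subspace $L$ of the odd-dimensional Hermitian space $V$, I first note that the projection of $L$ onto each hyperbolic plane $H_i$ is nontrivial (otherwise $L \subseteq H_i^\perp$, whose Witt index is $n-1$, contradicting $\dim L = n$). By swapping $e_i \leftrightarrow f_i$ for a suitable set of indices --- a hypercube symmetry acting on the function $S \mapsto p_S(L)$ by $S \mapsto S \Delta i$ that preserves the valuated $\Delta$-matroid property --- I can assume $L$ is represented by a matrix of the form $M = \begin{pmatrix} A & I & v \end{pmatrix}$ with $A \in K^{n\times n}$ and $v \in K^n$. Cofactor expansion along the columns of $I$ gives $p_S(L) = \nu(A_S)$ up to sign. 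Writing the isotropy condition $b(r_i, r_j) = 0$ for every pair of rows of $M$, and setting $\alpha := b(g,g) \in F$, collapses to the single matrix identity
\[
A + A^* = -\alpha v \overline{v}^T,
\]
where $A^* := \overline{A}^T$; in particular $A + A^*$ is Hermitian of rank at most one.

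For case (1), the hypothesis that the residue field has characteristic $\neq 2$ forces $\chara K \neq 2$, so $B := A + \tfrac{\alpha}{2}\, v \overline{v}^T$ makes sense and the identity above quickly gives $B^* = -B$. Thus $A = B - \tfrac{\alpha}{2}\, v \overline{v}^T$ expresses $A$ as a skew-Hermitian matrix plus a rank-one Hermitian matrix, and Proposition~\ref{prop:conjcase} yields the desired conclusion. For case (2), if $\chara K \neq 2$ then the residue field of the trivial valuation is $K$ itself and case (1) applies. If instead $\chara K = 2$, then $\iota \neq \mathrm{id}$ forces $K = F[\omega]$ with $\omega^2 + \omega + \beta = 0$ and $\overline{\omega} = \omega + 1$. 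I then set $C := \alpha v \overline{v}^T$ (Hermitian of rank $\leq 1$) and $B := A + \omega C$; a short computation using $\overline{\omega} = \omega + 1$ and $2 = 0$ verifies $B^* = B$, so $A = B + \omega C$ is of the form appearing in Conjecture~\ref{conj:char2plusone}, and Proposition~\ref{prop:conjcase2} applies.

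The main obstacle is the algebraic bookkeeping of the involution in the rank-one correction: the natural ``halving'' used in case (1) is unavailable in characteristic $2$, and one must instead use $\omega$ as a twist to absorb the obstruction $\alpha v \overline{v}^T$. Verifying $B^* = B$ in that subcase is the only nontrivial calculation; once it is in hand, both subcases reduce to black-box applications of Propositions~\ref{prop:conjcase} and~\ref{prop:conjcase2}.
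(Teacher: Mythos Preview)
Your proof is correct and follows essentially the same route as the paper: reduce to a matrix $M=\begin{pmatrix}A & I & v\end{pmatrix}$, read off the isotropy condition $A+\overline{A}^T+\alpha v\overline{v}^T=0$, then in case~(1) halve the rank-one term to write $A$ as skew-Hermitian plus rank-one Hermitian and invoke Proposition~\ref{prop:conjcase}, and in case~(2) with $\chara K=2$ use the twist by $\omega$ to write $A=B+\omega C$ with $B$ Hermitian and invoke Proposition~\ref{prop:conjcase2}. The only differences are expository (you spell out the projection argument and the verification that $B^*=B$).
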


\begin{proof}

Recall that $V$ is an odd-dimensional vector space over $K$ with non-degenerate Hermitian form $b$. In the coordinates of the chosen basis, we have
\[
b( (x_1,\dots,x_n,y_1,\dots,y_n,z), (x_1',\dots,x_n',y_1',\dots,y_n',z') ) = \sum_{i=1}^n x_i \overline{y_i'} + \sum_{i=1}^n y_i \overline{x_i'} + \alpha z \overline{z'}
\]
for some $\alpha$ in the fixed field $F$.
Let $M$ be the matrix representing $L$. We may assume $M = \begin{pmatrix} A & I & v \end{pmatrix}$ where $v$ is a column vector. Then $b$ vanishes on $L$ if and only if $A + \overline{A^T} + \alpha v \overline{v^T} = 0$.

First assume $\chara \kk \neq 2$. It follows that $\chara K \neq 2$. Thus $b$ vanishes on $L$ if and only if $A + \frac{1}{2}\alpha v \overline{v^T}$ is skew-Hermitian. Hence $A = B - \frac{1}{2}\alpha v \overline{v^T}$ for some skew-Hermitian $B$, and the result follows by Proposition~\ref{prop:conjcase}.

Now assume the valuation $\nu$ is trivial. By the previous case, we only need to consider the case where $\chara K = 2$. As discussed before Conjecture~\ref{conj:char2plusone}, since $\iota$ is not the identity we have $K = F[\omega]$ where $\overline{\omega} = \omega + 1$. Then $b$ vanishes on $L$ if and only if $A + \omega \alpha v \overline{v^T}$ is Hermitian. Thus $A = B +  \omega \alpha v \overline{v^T}$ for some Hermitian matrix $B$. The result then follows from Proposition~\ref{prop:conjcase2}.
\end{proof}

\begin{remark}
    As seen from the above proof, Conjecture~\ref{conj:isograssmannianodd} is equivalent to Conjecture~\ref{conj:skewHermplusone} when $\chara K \neq 2$, and is equivalent to Conjecture~\ref{conj:char2plusone} when $\chara K = 2$.
\end{remark}

\bibliographystyle{plain}

\end{document}